\theoremstyle{plain}
\newtheorem{theorem}{Theorem}[section]
\newtheorem{lemma}[theorem]{Lemma}
\newtheorem{corollary}[theorem]{Corollary}
\theoremstyle{definition}
\newtheorem{remark}[theorem]{Remark}
\numberwithin{equation}{section}
\renewcommand{\a}{\alpha}
\renewcommand{\b}{\beta}
\newcommand{\g}{\gamma}
\renewcommand{\d}{\delta}
\renewcommand{\k}{\kappa}
\renewcommand{\o}{\omega}
\renewcommand{\l}{\lambda}
\newcommand{\ps}{\emptyset}
\newcommand{\rest}{\upharpoonright}
\newcommand{\sst}{\subseteq}
\newcommand{\cu}{\mathcal U}
\newcommand{\cv}{\mathcal V}
\newcommand{\cp}{\mathcal P}
\newcommand{\F}{\mathcal F}
\newcommand{\ft}{\mathcal F^{\infty}_T}
\newcommand{\seq}[1]{\left<#1\right>}
\newcommand{\set}[1]{\left\{#1\right\}}
\newcommand{\abs}[1]{\left\vert#1\right\vert}
\newcommand{\norm}[1]{\left\lVert#1\right\rVert_{\infty}}
\newcommand{\dom}{\operatorname{dom}}
\newcommand{\supp}{\operatorname{supp}}
\newcommand{\otp}{\operatorname{otp}}
\newcommand{\h}{\operatorname{ht}}
\newcommand{\mycomment}[1]{}
\author[B. Kuzeljevi\'c]{Bori\v sa Kuzeljevi\'c}
\thanks{}
\address[Kuzeljevi\'c]{Department of Mathematics and Informatics, Faculty of Sciences, University of Novi Sad, Serbia.}
\email{borisha@dmi.uns.ac.rs}
\urladdr{\url{https://people.dmi.uns.ac.rs/\textasciitilde borisha}}
\dedicatory{}
\author[S. Milo\v sevi\'c]{Stepan Milo\v sevi\'c}
\thanks{}
\address[Milo\v sevi\'c]{Faculty of Technical Sciences, University of Novi Sad, Serbia.}
\email{stepanmilosevic@uns.ac.rs}
\author[S. Todor\v{c}evi\'{c}]{Stevo Todor\v{c}evi\'{c}}
\address[Todor\v{c}evi\'{c}]{Department of Mathematics, University of Toronto, Toronto, Canada, M5S 2E4. Institut de Math\'{e}matiques de Jussieu, UMR 7586, 2 pl. Jussieu, Case 7012, 75251 Paris Cedex 05, France. Mathematical Institute SANU, Kneza Mihaila 36, 11001 Belgrade, Serbia.}
\curraddr{}
\email{stevo@math.toronto.edu}
\email{stevo.todorcevic@imj-prg.fr}
\email{stevo@mi.sanu.ac.rs}
\urladdr{}
\dedicatory{}
\keywords{Topological group, Tukey reducubility, Aronszajn tree, Hausdorff gap}
\subjclass[2010]{54H11, 54A20, 03E04, 03E05, 22A05}
\thanks{}
\title{Topological groups from matrices of sets}
\date{\today}
\begin{document}

	\begin{abstract}
		We give a general construction of topological groups from combinatorial structures such as trees, towers, gaps, and subadditive functions.
		We connect topological properties of corresponding groups with combinatorial properties of these objects.
		For example, the group built from an $\o_1$-tree is Frech\'et iff the tree is Aronszajn.
		We also determine cofinal types of some of these groups under certain set theoretic assumptions.
	\end{abstract}

\maketitle

\section{Introduction}

The purpose of this note is to provide a general construction of new examples of topological groups.
The key feature of groups we will present is that they are all built based on well known set-theoretic objects such as subadditive functions, trees, gaps, and towers.
For all undefined notions, we refer the reader to Section \ref{s:preliminaries}.
This research direction originated in the work by the third author, note the most recent contribution in \cite[Theorem 4.9]{pid_fund} and results in \cite{walks}, and extended by the second and the third author in \cite{mt_walks}.
There, topological groups were constructed from characteristics of walks on ordinals.
There were related constructions preceeding these, the reader can take a look at \cite{dw,gt,cs,szept} and a historical remark after Theorem 4.9 in \cite{pid_fund}.
Here we generalize those constructions and find new examples.
In our framework, we are able to characterize Frech\'etness of analyzed groups via combinatorial properties of underlying objects.
For example, by Theorem \ref{t:treefrechet} the group we get from an $\o_1$-tree is Frech\'et iff the tree is Aronszajn.

There are many problems in the literature involving Frech\'et topological groups.
To mention just a few, recall that the solution to Malykhin's problem has been completed recently in \cite{malykhin}, see also \cite{hrusakshibakov}.
It asks whether there is a countable Frech\'et non-metrizable group, see \cite[Question 26]{arh}.
The solution is that, although there are many consistent examples, there is a model where no such group exists.
There is also a well-known, still open, problem whether in ${\rm ZFC}$ there is a Frech\'et group whose square is not Frech\'et.
See \cite[pp. 154]{arh} for a discussion about this class of problems or \cite[pp. 474]{shakhmatov} where the problem is asked as mentioned.
The furthest advancement on this problem is in \cite{stevoSL} where two Frech\'et groups are constructed whose product is not countably tight.
Another problem we find interesting was posed by Feng in \cite{feng}: Is there, in ${\rm ZFC}$, a non-metrizable Frech\'et group $G$ such that $[\o_1]^{<\o}\not\le_T G$.
One consistent example is $G_{\rho_{2}}(\theta)$ for $\theta>\o_1$ using $\square(\theta)$ from \cite[Theorem 4.9]{pid_fund}, while the other is $G_{\rho_2}(\omega_1)$ using Ostaszewski's club principle in \cite{mt_walks}.
This question relates the problem of metrizability of a group to its Tukey type.
This is expected as in this language, Birkhoff-Kakutani theorem says that a group $G$ is metrizable if and only if $G\le_T \o$.
It should also be mentioned that the third author in \cite{stbasic} has found another equivalent condition for metrizability in the class of Frech\'et groups.
He showed that a Frech\'et group is metrizable if and only if it is Tukey reducible to a basic order in the sense of \cite{basic}.

Tukey types of topological groups and spaces have been recently investigated in papers by Gartside, Feng, Morgan, and Mamatelashvili, the reader can take a look at \cite{gartfeng,gartmam,gartmamsubsets,gartmor,morgan}.
Some of their work is formulated in terms of calibres.
Usually this can be rephrased in the language of Tukey reductions - for example, $D$ has calibre $(\o_1,\o)$ if and only if $[\o_1]^{<\o}\not\le_T D$.
The recent paper \cite{rejection} is relevant for the general Tukey theory of topological groups as, among other results, they show that for any directed set $D$ there is a topological group $G$ such that $G\equiv_T D$.
Significant related theory, although presented in a different language, can be found in an extensive paper of Banakh \cite{banakh}.
For this line of research, one can take a look at \cite{gabkaklei}, \cite{banschu}, or \cite{dowfeng} and \cite{feng}.
Already mentioned \cite{stbasic} also provides a more general view on that topic.

\subsection{The outline of the paper}
In Section \ref{s:general}, we describe our general framework for building topological groups from $(\k,\l)$-matrices where $\l\le\k$ are regular infinite cardinals.
The parameter $\k$ refers to the cardinality and character of the group, which are both $\k^+$, while the parameter $\l$ refers to the tightness of the group.
In Section \ref{s:tightness} we show that the group obtained from a strong matrix is Frech\'et iff the matrix is unbounded in a certain well-defined sense.
In Section \ref{s:subadditive} we show that each transitive function gives a matrix, while subadditive one gives a strong matrix.
As a corollary we get groups from metrics on ordinals and from $\k^+$-towers in $\cp(\k)$.
In Section \ref{s:tree} we present two constructions of topological groups from trees.
The first is very general, and gives the mentioned Theorem \ref{t:treefrechet}: the group is Frech\'et iff the tree is Aronszajn.
The other is for a specific class of trees, and allows one to get the non-metrizable Frech\'et group $G(\ft)$ which does not have $[\o_1]^{<\o}$ as a Tukey quotient.
This gives another consistent counterexample for Feng's question, built from a particular Souslin tree.
In the last section, Section \ref{s:gaps}, we give a construction of the topological group from any $(\k^+,\k^+)$ pre-gap in $\cp(\k)$.
This is different in a sense that we do not get exactly a matrix, but we show that the obtained family is good enough to give the subbase for our topological group.
Then we prove that in order for our group to be countably tight, the pre-gap in fact has to be a gap.

\section{Preliminaries}\label{s:preliminaries}

We use mostly standard notation.
When set theory is concerned, standard reference with all the relevant background is \cite{jech}.
If $A$ is a set, then its cardinality is denoted $\abs{A}$.
For a set $A$ and a cardinal $\k$, we denote $[A]^{\k}=\set{X\sst A:\abs{X}=\k}$.
Sets $[A]^{<\k}$ and $[A]^{\le\k}$ are defined similarly.
For a set $A$, we denote its powerset by $\mathcal P(A)$.
For sets $A$ and $B$, function $f:A\to B$, and $X\sst A$ we denote $f[X]=\set{f(x):x\in X}$.
For $Y\sst B$ we denote $f^{-1}[Y]=\set{a\in A:f(a)\in Y}$.
For sets of ordinals $a$ and $b$ we write $a<b$ when $\a<\b$ for all $\a\in a$ and $\b\in b$.
All variations on this notion are handled similarly.
If $\theta$ is a regular cardinal, then $H(\theta)$ is the set of all sets of hereditary cardinality less than $\theta$.
In this case, $H(\theta)$ satisfies all axioms of $\rm{ZFC}$ except the powerset axiom.
If $\k$ is a cardinal, $C\sst \k$ is a \emph{club} in $\k$ if it is closed and unbounded in $\k$.
A set $S\sst \k$ is \emph{stationary} if it intersects every club in $\k$.
Every club in $\k$ is stationary in $\k$.
The following lemma is well known, and will be used later in the paper.

\begin{lemma}\label{l:elsubmodel}
	Let $\theta$ be an uncountable regular cardinal, $\k$ a regular cardinal such that $\k^+<\theta$, $S$ stationary in $\k^+$, and $X\in H(\theta)$ of size at most $\k$.
	Then, there is an elementary submodel $M$ of $H(\theta)$ such that $\abs{M}=\k$, $X\sst M$, and $\k^+\cap M\in S$.
\end{lemma}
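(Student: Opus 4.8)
\emph{Proof plan.} The plan is to build a continuous increasing elementary chain $\seq{M_\alpha:\alpha<\k^+}$ of elementary submodels of $H(\theta)$, each of size $\k$, and then to take $M$ to be a carefully chosen member of this chain. I would first invoke the downward L\"owenheim--Skolem theorem (legitimate since $\theta$ is regular and uncountable and $\k\cup\set{X,\k,\k^+}$ is an element of $H(\theta)$ of size $\k$) to fix $M_0\prec H(\theta)$ with $\abs{M_0}=\k$ and $\k\cup\set{X,\k^+}\sst M_0$. At a successor stage I would choose $M_{\alpha+1}\prec H(\theta)$ of size $\k$ with $M_\alpha\cup\set{M_\alpha}\sst M_{\alpha+1}$, and at a limit stage $\gamma$ I would take $M_\gamma=\bigcup_{\alpha<\gamma}M_\alpha$, which is again an elementary submodel of size $\k$ and makes the chain $\sst$-continuous. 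Note that $X\sst M_0\sst M_\alpha$ for every $\alpha$.

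Next I would set $\delta_\alpha=\sup(M_\alpha\cap\k^+)$; since $\abs{M_\alpha}=\k$ and $\k^+$ is regular, $\delta_\alpha<\k^+$. The key observation I would rely on is that $\delta_\alpha\sst M_{\alpha+1}$: as $M_\alpha\in M_{\alpha+1}$ and $\k^+\in M_{\alpha+1}$, the ordinal $\delta_\alpha$ is definable in $M_{\alpha+1}$, hence $\delta_\alpha\in M_{\alpha+1}$; and since $\abs{\delta_\alpha}\le\k$ there is in $M_{\alpha+1}$ a surjection from $\k$ onto $\delta_\alpha$, which together with $\k\sst M_{\alpha+1}$ forces $\delta_\alpha\sst M_{\alpha+1}$. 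Consequently $\delta_\alpha<\delta_{\alpha+1}$, while $\sst$-continuity of the chain gives $\delta_\gamma=\sup_{\alpha<\gamma}\delta_\alpha$ at limit $\gamma$. Thus $\alpha\mapsto\delta_\alpha$ is a normal function on $\k^+$, so the set $C=\set{\gamma<\k^+:\delta_\gamma=\gamma}$ of its fixed points is a club in $\k^+$, and hence so is its set $C'$ of limit points.

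Finally, for $\gamma\in C'$ I would check that $M_\gamma\cap\k^+=\gamma$: on the one hand $\sup(M_\gamma\cap\k^+)=\delta_\gamma=\gamma$, and on the other hand, if $\beta<\gamma$, then choosing $\alpha\in C'$ with $\beta<\alpha<\gamma$ we get $\beta\in\alpha=\delta_\alpha\sst M_{\alpha+1}\sst M_\gamma$ (here $\alpha+1<\gamma$ since $\gamma$ is a limit ordinal), so $M_\gamma\cap\k^+$ is exactly the ordinal $\gamma$. Then, using that $S$ is stationary in $\k^+$, I would pick $\gamma\in C'\cap S$ and set $M=M_\gamma$; this gives $M\prec H(\theta)$, $\abs{M}=\k$, $X\sst M$, and $\k^+\cap M=\gamma\in S$, as required. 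The one step that requires genuine care — the main obstacle — is arranging that $\k^+\cap M$ is an actual ordinal rather than merely a cofinal subset of $\gamma$; this is precisely what putting $\k$ into $M_0$ and the surjection argument at successor stages buy us, and it is the reason for passing from $C$ to its set of limit points $C'$. Everything else (elementarity of unions, the size bookkeeping, normality of $\delta$) is routine.
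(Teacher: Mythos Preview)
The paper does not give a proof of this lemma; it simply states it as ``well known.'' Your argument is the standard one and is correct in outline and in almost every detail.

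There is one small slip. In the final paragraph, when showing that $\gamma\subseteq M_\gamma$ for $\gamma\in C'$, you write ``choosing $\alpha\in C'$ with $\beta<\alpha<\gamma$.'' But $\gamma\in C'$ only says that $\gamma$ is a limit point of $C$, so $C\cap\gamma$ is cofinal in $\gamma$; it does not guarantee that $C'\cap\gamma$ is cofinal in $\gamma$ (for instance, if $\gamma$ is the least element of $C'$ then $C'\cap\gamma=\emptyset$). The fix is immediate: choose $\alpha\in C$ with $\beta<\alpha<\gamma$ instead. Since $C$ is closed, $C'\subseteq C$, so $\delta_\alpha=\alpha$ still holds and the rest of the line goes through unchanged. (In fact you could avoid $C'$ altogether and work with $\gamma\in C$ a limit ordinal: for $\alpha<\gamma$ one has $\delta_\alpha<\delta_\gamma=\gamma$ and $\delta_\alpha\subseteq M_{\alpha+1}\subseteq M_\gamma$, so $\gamma=\sup_{\alpha<\gamma}\delta_\alpha\subseteq M_\gamma$.)

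One other point you leave implicit: the claim $X\subseteq M_0$ uses exactly the same surjection argument you spell out later for $\delta_\alpha\subseteq M_{\alpha+1}$ (namely $X\in M_0$, $\kappa\in M_0$, $|X|\le\kappa$, $\kappa\subseteq M_0$). Since you clearly know this argument, this is not a gap, but it would be cleaner to remark on it.
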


For Tukey theory, a good reference for standard notions is \cite{stevodirected}.
A poset $(D,\le_D)$ is \emph{directed set} if for all $x$ and $y$ in $D$, there is $z$ in $D$ such that $x\le_D z$ and $y\le_D z$.
For example, in a topological space, a local base of any point is directed by the relation $\supseteq$.
If $(D,\le_D)$ is a directed set, then $X\sst D$ is \emph{bounded} if there is $d\in D$ such that $x\le_D d$ for all $x\in X$, otherwise it is \emph{unbounded}.
A set $Y\sst D$ is \emph{cofinal} in $D$ if for each $d\in D$ there is $y\in Y$ such that $d\le_D y$.
For directed sets $(D,\le_D)$ and $(E,\le_E)$, we say that $f:D\to E$ is a \emph{Tukey map} if $f[X]$ is unbounded in $E$ for any unbounded $X$ in $D$.
This is the same as saying that the preimage of any bounded set is bounded.
When there is a Tukey map $f:D\to E$, we say that $D$ is Tukey reducible to $E$ (or that $D$ is a Tukey quotient of $E$) and write $D\le_T E$.
It is well-known that $D\le_T E$ iff there is convergent map from $E$ to $D$ (see \cite{stevodirected} or \cite{schmidt}).
Recall that $f:E\to D$ is a \emph{convergent} (cofinal) map if for any $d\in D$ there is $e\in E$ such that $f(x)\ge_D d$ for all $x\ge_E e$.
This is the same as saying that the image of any cofinal set is cofinal.
Tukey proved that $D\le_T E$ and $E\le_T D$ (in short $D\equiv_T E$) iff both $D$ and $E$ can be embedded as cofinal subsets into a single poset (see \cite{tukey} or \cite{stevodirected}).
We will often use the following remark.

\begin{remark}\label{r:types}
	If $\k$ is a regular infinite cardinal and $A\sst [\k]^{<\o}$ is of cardinality $\k$ and directed with respect to $\sst$, then $\left([\k]^{<\o},\sst\right)\equiv_T (A,\sst)$.
\end{remark}

For topological groups, standard reference is \cite{knjiga}.
A \emph{topological group} is an abstract group $(G,\cdot)$ equipped with a topology such that operations $\cdot:G\times G\to G$ and ${}^{-1}:G\to G$ are continuous.
For an abstract group $G$, we typically denote its identity by $e$.
Tukey theory in the context of topological groups has been present for a long time, but for basic facts and definitions we refer to \cite{km}. In particular, if $\mathcal B_e$ is the collection of all open neighborhoods of the identity $e$ in $G$ and $(D,\le_D)$ is a directed set, we say that $G\le_T D$ if $(\mathcal B_e,\supseteq)\le_T (D,\le_D)$.
We define $G\le_T H$ similarly for groups $G$ and $H$.
We will mostly use the following fact for building topological groups, see \cite[Theorem 1.3.12]{knjiga} or \cite[Theorem 4.5]{hr}.

\begin{theorem}\label{t:osnovna}
	Let $G$ be a group with the identity $e$, and $\cu$ a family of subsets of $G$ satisfying the following conditions:
	\begin{enumerate}
		\item for every $U\in\cu$ there is an element $V\in\cu$ such that $V^2\sst U$;
		\item for every $U\in\cu$ there is an element $V\in\cu$ such that $V^{-1}\sst U$;
		\item for every $U\in\cu$ and every $x\in U$, there is $V\in\cu$ such that $Vx\sst U$;
		\item for every $U\in\cu$ and $x\in G$, there is $V\in\cu$ such that $xVx^{-1}\sst U$;
		\item for $U,V\in\cu$, there is $W\in\cu$ such that $W\sst U\cap V$;
		\item $\set{e}=\bigcap \cu$.
	\end{enumerate}
	Then $G$ is a topological group with the topology generated by the base $\set{Ua:a\in G}$, i.e. $\cu$ is an open base of the identity in $G$.
\end{theorem}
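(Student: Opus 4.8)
The plan is to perform the standard verification that $\mathcal B=\set{Ua:U\in\cu,\ a\in G}$ is a base for a group topology on $G$ for which $\cu$ is a neighbourhood base at $e$; this is a classical fact, so the work is a routine check of the base and operation axioms. First note that (6) gives $e\in U$ for every $U\in\cu$, so $\mathcal B$ covers $G$. The first step is the intersection condition for a base: if $x\in U_1a_1\cap U_2a_2$ with $U_i\in\cu$, then $xa_i^{-1}\in U_i$, so (3) supplies $V_i\in\cu$ with $V_i(xa_i^{-1})\sst U_i$, whence $V_ix\sst U_ia_i$; now (5) gives $W\in\cu$ with $W\sst V_1\cap V_2$, and $x\in Wx\sst U_1a_1\cap U_2a_2$. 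Let $\tau$ be the topology with base $\mathcal B$. Running the same computation with $x=e$ shows that every $\tau$-open set containing $e$ contains some member of $\cu$, so $\cu$ is a neighbourhood base at $e$; and directly from the definition of $\mathcal B$, every right translation $z\mapsto za$ is a homeomorphism of $(G,\tau)$.

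Next I would verify continuity of the group operations. For multiplication at a pair $(x,y)$ and a basic neighbourhood $U(xy)$ of $xy$: use (1) to get $V\in\cu$ with $V^2\sst U$, then (4) to get $V'\in\cu$ with $xV'x^{-1}\sst V$; since $v\,x\,v'\,y=v\,(xv'x^{-1})\,(xy)$ for $v\in V$ and $v'\in V'$, we obtain $(Vx)(V'y)\sst V^2(xy)\sst U(xy)$, as needed. For inversion at $x$ and a basic neighbourhood $Ux^{-1}$ of $x^{-1}$: use (4) to get $V\in\cu$ with $x^{-1}Vx\sst U$, then (2) to get $W\in\cu$ with $W^{-1}\sst V$; then $(Wx)^{-1}=x^{-1}W^{-1}\sst x^{-1}V=(x^{-1}Vx)x^{-1}\sst Ux^{-1}$. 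Hence $(G,\tau)$ is a topological group. Finally, for $y\neq e$, condition (6) yields $V\in\cu$ with $y^{-1}\notin V$, i.e. $e\notin Vy$, so $G\setminus\set{e}$ is open; thus $\set{e}$ is closed, and by homogeneity every singleton is closed, so the topology is $T_1$ (and therefore Hausdorff, as with any group topology).

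I expect no serious obstacle: the whole argument is bookkeeping with cosets. The one point that needs care is non-commutativity — condition (4) is invoked precisely to move a fixed group element past a small neighbourhood of the identity, in the form $xV'x^{-1}\sst V$ for multiplication and $x^{-1}Vx\sst U$ for inversion — and one must stay consistent about using right translates $Ua$ throughout rather than mixing left and right cosets. Everything else is an application of (1), (2), (3) and (5) in the obvious way.
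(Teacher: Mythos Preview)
Your argument is correct and is the standard verification; the identities $vxv'y=v(xv'x^{-1})(xy)$ and $(Wx)^{-1}=(x^{-1}W^{-1}x)x^{-1}$ are exactly the right bookkeeping, and the use of (3) and (5) for the base axiom is clean. The closing $T_1$/Hausdorff remark is a small bonus not asserted in the statement.

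There is nothing to compare with, however: the paper does not prove this theorem. It is quoted as a known background fact with references (``see \cite[Theorem 1.3.12]{knjiga} or \cite[Theorem 4.5]{hr}''), and the paper only \emph{applies} it, verifying conditions (1)--(6) for the specific families $\cu(\mathcal F)$, $\cu^{\infty}_T$, $\cu_{\mathcal G}$ in Theorems~\ref{t:tgroup}, \ref{t:treealt-group}, \ref{t:gap-group}. So your write-up supplies a proof where the paper simply cites one.
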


The notation $\overline{A}$ will be reserved for the closure of $A$ in the appropriate topology.

A topological group $G$ is $\emph{countably tight}$ if for all $A\sst G$ with $e\in\overline{A}$, there is a countable $X\sst A$ such that $e\in \overline{X}$.
For a cardinal $\l$, we say that $G$ has \emph{tightness at most $\l$} if for every $A\sst G$ with $e\in \overline{A}$, there is $X\in [A]^{\le\l}$ such that $e\in\overline{X}$.

A topological group $G$ is \emph{Frech\'et} if for all $A\sst G$ with $e\in \overline{A}$, there is a sequence $s$ in $A$ such that $\lim s=e$.
Recall that $G$ is an \emph{$(\a_1)$-group} if for any countable collection of sequences $\set{s_n:n<\o}$ in $G$ such that $\lim s_n=e$ for each $n<\o$, there is a sequence $s$ in $G$ such that $\lim s=e$ and that $s\setminus s_n$ is finite for all $n<\o$.
The following lemma is well known.

\begin{lemma}\label{l:alpha_1}
	If $G$ is a topological group of cardinality $\aleph_1$ which is an increasing union of metrizable subsets, then $G$ is an $(\a_1)$-group.
\end{lemma}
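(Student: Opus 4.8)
The plan is to funnel the given countably many sequences into a single metrizable member of the increasing union and then to run the classical argument that a metric space is an $(\a_1)$-space. Suppose sequences $s_n$ $(n<\o)$ in $G$ with $\lim s_n=e$ are given, and by hypothesis write $G=\bigcup_{\a<\o_1}G_\a$ with $G_\a\sst G_\b$ for $\a\le\b$ and each $G_\a$ metrizable in the subspace topology. First I would note that $A:=\set{e}\cup\bigcup_{n<\o}\ran(s_n)$ is countable, so by regularity of $\o_1$ there is $\a<\o_1$ with $A\sst G_\a$. Since $G_\a$ carries the subspace topology and $e\in G_\a$, a sequence of points of $G_\a$ converges to $e$ in $G_\a$ precisely when it does so in $G$; in particular each $s_n$ converges to $e$ as a sequence in the metrizable space $G_\a$, and any sequence I build inside $G_\a$ with limit $e$ will automatically have limit $e$ in $G$ as well.

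Next I would diagonalize inside $(G_\a,d)$, where $d$ is a compatible metric. For each $n$ pick $m_n$ with $d(s_n(k),e)<2^{-n}$ for all $k\ge m_n$ and put $T_n:=\set{s_n(k):k\ge m_n}$, so $\ran(s_n)\setminus T_n$ is finite and $T_n$ lies in the $2^{-n}$-ball around $e$; for $j\ge1$ set $F_j:=\set{a\in\bigcup_n T_n:d(a,e)\ge1/j}$. Each $F_j$ is finite, since only finitely many $n$ have $2^{-n}>1/j$ and, for each of those, the tail $T_n$ of a sequence converging to $e$ meets the complement of the $(1/j)$-ball in a finite set. I would then list $\bigcup_n T_n$ as one $\o$-sequence $s$ by enumerating $F_1$, then $F_2\setminus F_1$, then $F_3\setminus F_2$, and so on---the blocks being finite---appending copies of $e$ at the end if $\bigcup_n T_n$ is finite. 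Then $\lim s=e$ in $G_\a$, hence in $G$: given $\varepsilon>0$, choose $j$ with $1/j<\varepsilon$, and every term of $s$ listed after the finitely many elements of $F_j$ is within $\varepsilon$ of $e$. Finally $T_n\sst\ran(s)$ while $\ran(s_n)\setminus T_n$ is finite, so for each $n$ all but finitely many terms of $s_n$ occur in $s$---which is exactly the $(\a_1)$-condition for $\set{s_n:n<\o}$.

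I do not expect a real obstacle here. The hypothesis of the lemma enters only in the first paragraph, where one needs precisely that $G$ is an increasing union of metrizable subspaces along a chain of uncountable cofinality, so that the countable set $A$ gets absorbed into some $G_\a$; the group structure is not used. The one point requiring a little care is the interchangeability of convergence computed in $G_\a$ and in $G$, which holds because $G_\a$ has the subspace topology and contains $e$. Everything after the reduction is the textbook proof that metric (indeed first-countable $T_1$) spaces have property $(\a_1)$, which one could alternatively just cite.
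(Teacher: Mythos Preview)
The paper states this lemma as ``well known'' and does not supply a proof, so there is no argument in the paper to compare yours against. Your proof is correct and is the standard one: use cofinality $\o_1$ of the chain to absorb the countable set $A$ into a single metrizable $G_\a$, then diagonalize with a compatible metric.

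Two small remarks. First, the paper's stated definition of $(\a_1)$ appears to contain a typo (it writes ``$s\setminus s_n$ finite'' rather than ``$s_n\setminus s$ finite''); you have proved the standard version, which is clearly what is intended. Second, your enumeration of $\bigcup_n T_n$ via the blocks $F_j$ may omit the point $e$ itself, since $d(e,e)=0$ places $e$ in no $F_j$; so strictly you obtain $T_n\setminus\{e\}\sst\ran(s)$ rather than $T_n\sst\ran(s)$. This is harmless, as it changes each difference $\ran(s_n)\setminus\ran(s)$ by at most one point. Your closing parenthetical that first-countable spaces are $(\a_1)$ is correct, and indeed the $T_1$ hypothesis is not needed for that argument.
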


\section{Matrices}\label{s:general}

In this section we describe a general method for constructing topological groups on the set $[\k^+]^{<\o}$ for any infinite regular cardinal $\k$.

Let $A$ and $B$ be sets of ordinals.
We say that a family $$\mathcal F=\set{F_{\xi}(\a): \a\in A,\ \xi\in B}$$ is an \emph{$(A,B)$-matrix} if the following conditions hold:
\begin{itemize}
	\item[(G1)] $\bigcup_{\xi\in B}F_{\xi}(\a)=\a$ for every $\a\in A$,
	\item[(G2)] $F_{\xi}(\a)\sst F_{\eta}(\a)$ for each $\a\in C$ and all $\xi\le\eta$ in $B$,
	\item[(G3)] for all $\a\le\b$ in $A$ and each $\xi$ in $B$, there is $\eta\in B$ so that $F_{\xi}(\a)\sst F_{\eta}(\b)$.
\end{itemize}

If $\l\le\k$ are infinite regular cardinals, $\mathcal F$ is a $(C,\l)$-matrix, and $C$ is a cofinal subset of $\k^+$, then we will say $\mathcal F$ is a $(\k,\l)$-matrix indexed by $C$.
If $C$ is clear from the context, or not essential, we will just say $(\k,\l)$-matrix.
Finally, if $\k=\l$, we will simply say that $\mathcal F$ is a $\k$-matrix.

For the remaining of this section fix $\l\le\k$ regular infinite cardinals, and a $(\k,\l)$-matrix $\mathcal F$ indexed by $C$. 
For $\xi<\l$ and $\a\in C$ we set $$U_{\xi}(\a)=\set{a\in [\k^+]^{<\o}: a\cap F_{\xi}(\a)=\ps}.$$
Then, to a matrix $\mathcal F$ we adjoin a family $\cu(\mathcal F)=\set{U_{\xi}(\a):\xi<\l,\ \a\in C}$. 

In \cite{pid_fund} and \cite{mt_walks} two different $(\k,\l)$-matrices were introduced, depending on characteristics of walks $\rho_1$ and $\rho_2$, and it was proved that they give topological groups.
Recall that $\triangle$ is the operation of symmetric difference.

\begin{theorem}\label{t:tgroup}
	Suppose that $\l\le\k$ are regular infinite cardinals.
	Let $$\mathcal F=\set{F_{\xi}(\a):\a\in C,\ \xi<\l}$$ be a $(\k,\l)$-matrix.
	Then $\cu(\mathcal F)$ is the neighborhood basis of the identity for a topology on $[\k^+]^{<\o}$ which makes $([\k^+]^{<\o},\triangle)$ a topological group.
\end{theorem}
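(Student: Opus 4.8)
The plan is to verify the six conditions of Theorem~\ref{t:osnovna} for the abelian group $G=([\k^+]^{<\o},\triangle)$ with identity $e=\ps$, taking $\cu=\cu(\mathcal F)$ as the candidate open base at $e$; recall that in $G$ every element is its own inverse, and that $a\triangle b\sst a\cup b$ for all $a,b\in[\k^+]^{<\o}$. These two elementary features dispose of conditions (1)--(4) almost for free. Indeed, if $a,b\in U_{\xi}(\a)$ then $a\triangle b\sst a\cup b$ is disjoint from $F_{\xi}(\a)$, so $U_{\xi}(\a)\triangle U_{\xi}(\a)\sst U_{\xi}(\a)$, giving (1) with $V=U$; the same computation shows $U_{\xi}(\a)x\sst U_{\xi}(\a)$ whenever $x\in U_{\xi}(\a)$, giving (3) with $V=U$; and since $G$ is abelian with $a^{-1}=a$, conditions (2) and (4) hold with $V=U$ as well.

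The substantive work is in conditions (5) and (6), where the matrix axioms and the cofinality of $C$ in $\k^+$ come in. For (5), given $U_{\xi_1}(\a_1)$ and $U_{\xi_2}(\a_2)$ in $\cu(\mathcal F)$, I would first pick $\b\in C$ with $\b\ge\a_1,\a_2$, then apply (G3) to get $\eta_1,\eta_2<\l$ with $F_{\xi_i}(\a_i)\sst F_{\eta_i}(\b)$, and finally use (G2) to absorb both into $F_{\eta}(\b)$ for $\eta=\max\set{\eta_1,\eta_2}$; then $W=U_{\eta}(\b)$ is contained in $U_{\xi_1}(\a_1)\cap U_{\xi_2}(\a_2)$, because any set disjoint from $F_{\eta}(\b)$ is disjoint from $F_{\xi_1}(\a_1)\cup F_{\xi_2}(\a_2)$. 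For (6), one inclusion is trivial since $\ps\in U_{\xi}(\a)$ always; for the other, given a nonempty $a\in[\k^+]^{<\o}$ I would choose $\a\in C$ above $\max a$, so that $a\sst\a$, and then use (G1) applied to each of the finitely many elements of $a$ together with the monotonicity (G2) to land all of $a$ inside a single $F_{\xi}(\a)$; then $a\cap F_{\xi}(\a)=a\ne\ps$ witnesses $a\notin\bigcap\cu(\mathcal F)$. An application of Theorem~\ref{t:osnovna} then finishes the proof.

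I do not expect a serious obstacle here: this is a soft verification once one notices the two elementary properties of $\triangle$ above. The only delicate point is the step, recurring in (5) and (6), of passing from finitely many containments into sets $F_{\eta_i}(\b)$ (respectively, from covering each point of $a$ by some $F_{\xi}(\a)$) to a \emph{single} bounding index; this is exactly where regularity of $\l$ --- so that finite subsets of $\l$ are bounded --- is combined with the monotonicity (G2). Everything else is routine set-theoretic bookkeeping, and none of the closure or continuity of the group operations needs to be checked by hand, since Theorem~\ref{t:osnovna} provides it from conditions (1)--(6).
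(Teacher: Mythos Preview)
Your proposal is correct and follows essentially the same approach as the paper: verify the six conditions of Theorem~\ref{t:osnovna} using $a\triangle b\sst a\cup b$ and $a^{-1}=a$ for (1)--(4), then (G3) with (G2) for directedness in (5), and cofinality of $C$ together with (G1) for (6). The only cosmetic differences are that the paper, in (5), uses the larger of the two given indices rather than a fresh $\b\in C$ above both, and in (6) it only places a single element of $a$ into some $F_{\xi}(\a)$ rather than all of $a$; your remark about needing regularity of $\l$ to bound finitely many indices is harmless but unnecessary, since finite subsets of any ordinal have a maximum.
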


\begin{proof}
	It is sufficient to show that $\cu(\mathcal F)$ satisfies conditions (1)-(6) in Theorem \ref{t:osnovna}.

    To see that (1) and (3) are true, take $\xi<\k$ and $\a\in C$.
	Note that we have  $U_{\xi}(\a)\triangle U_{\xi}(\a)\sst U_{\xi}(\a)$ because $a\triangle b\sst a\cup b$, and consequently $a\triangle b\in U_{\xi}(\a)$ for $a,b\in U_{\xi}(\a)$.
    For (2) just observe that $a\triangle a=\ps$ for $a\in [\k^+]^{<\o}$, so $U^{-1}=U$ for each $U\in\cu(\mathcal F)$. Condition (4) holds by the commutativity of $\triangle$.
    
    Now we prove item (5).
    Take any $U_{\xi}(\a)$ and $U_{\eta}(\b)$ in $\cu(\mathcal F)$.
    Assume $\a\le\b$.
    Since $\mathcal F$ is a $(\k,\l)$-matrix, by (G3), there is some $\xi_0$ such that $F_{\xi}(\a)\sst F_{\xi_0}(\b)$.
    Now take $\zeta=\max\set{\eta,\xi_0}$.
    Then, using (G2), we have $F_{\xi}(\a)\cup F_{\eta}(\b)\sst F_{\zeta}(\b)$, so $U_{\zeta}(\b)\sst U_{\xi}(\a)\cap U_{\eta}(\b)$ as required.

    To see (6), suppose there is $a\in [\k^+]^{<\o}\setminus\set{\emptyset}$ so that $a\in \bigcap \cu(\mathcal F)$.
    Let $\a\in C$ be such that $a\sst \a$.
	This is possible as $C$ is unbounded in $\k^+$.
    Take $x\in a$.
    Then, by (G1), there is $\xi<\l$ so that $x\in F_{\xi}(\a)$.
	This means that $a\cap F_{\xi}(\a)\neq\ps$, i.e. $a\notin U_{\xi}(\a)$, contradicting the assumption that $a$ is a non-empty set in $\bigcap\cu(\mathcal F)$.
\end{proof}

\begin{remark}\label{r:directed}
	If $\mathcal F$ is a $(\k,\l)$-matrix, then $(\mathcal F,\sst)$ is a directed set.
	This follows from property (G3), and also can be seen as a consequence of Theorem \ref{t:tgroup} or its proof (the proof of (5) from Theorem \ref{t:osnovna}).
\end{remark}

For the remaining of the paper, if $\mathcal F$ is a $(\k,\l)$-matrix, then $G(\mathcal F)$ will always denote the topological group given by Theorem \ref{t:tgroup}.
Let us first present some properties of these groups.

\begin{theorem}\label{t:character}
	Suppose that $\l\le\k$ are infinite regular cardinals.
	Let $\mathcal F$ be a $(\k,\l)$-matrix.
	Then the group $G(\mathcal F)$ has character $\k^+$.
\end{theorem}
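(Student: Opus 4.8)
The plan is to show that $\cu(\mathcal F) = \set{U_\xi(\a) : \xi < \l,\ \a \in C}$ is a neighborhood base at the identity of cardinality at most $\k$, and that no neighborhood base at the identity can be smaller. For the upper bound, first I would note that $\abs{C} \le \k^+$, so a priori $\abs{\cu(\mathcal F)} \le \k^+ \cdot \l = \k^+$; to sharpen this to $\k^+$ being the character rather than something larger, observe that the family $\cu(\mathcal F)$ is already a base at $e$ by Theorem~\ref{t:tgroup}, and for each fixed $\a \in C$ there are only $\l \le \k$ sets $U_\xi(\a)$. The point is that $\abs{C} \le \abs{\k^+} = \k^+$, giving $\abs{\cu(\mathcal F)} \le \k^+$, hence $\chi(G(\mathcal F)) \le \k^+$. (If one wants $C$ of size exactly $\k^+$ that is automatic since $C$ is cofinal in $\k^+$ and $\k^+$ is regular.)

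For the lower bound, suppose toward a contradiction that $\set{V_\b : \b < \k}$ is a neighborhood base at $e$ of size at most $\k$. Each $V_\b$ contains some basic $U_{\xi_\b}(\a_\b) \in \cu(\mathcal F)$. Since $\abs{\set{\a_\b : \b < \k}} \le \k < \k^+$ and $C$ is cofinal in $\k^+$ with $\k^+$ regular, there is $\g \in C$ with $\g > \sup_\b \a_\b$; pick an ordinal $\delta$ with $\sup_\b \a_\b \le \delta < \g$ and consider the singleton $a = \set{\delta} \in [\k^+]^{<\o}$. I claim $U_\xi(\g)$ for a suitable $\xi$ separates $a$ from every $U_{\xi_\b}(\a_\b)$: by (G1), $\delta \in F_\xi(\g)$ for some $\xi < \l$, so $a \notin U_\xi(\g)$, i.e. $U_\xi(\g)$ is a neighborhood of $e$ not containing $a$. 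On the other hand, for each $\b$ we have $\delta \notin \a_\b$ (since $\delta \ge \a_\b$... wait, need $\delta \notin F_{\xi_\b}(\a_\b)$, which holds because $F_{\xi_\b}(\a_\b) \sst \a_\b$ by (G1) and $\delta \ge \sup \a_\b$, so $a \cap F_{\xi_\b}(\a_\b) = \ps$), whence $a \in U_{\xi_\b}(\a_\b) \sst V_\b$. Thus every $V_\b$ contains $a$, so no $V_\b$ is contained in the neighborhood $U_\xi(\g)$ of $e$, contradicting that $\set{V_\b : \b < \k}$ is a base at $e$. Hence $\chi(G(\mathcal F)) \ge \k^+$.

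Combining the two bounds gives $\chi(G(\mathcal F)) = \k^+$. I expect the main obstacle to be the bookkeeping in the lower bound: one must choose the test point $\delta$ so that the single element of $\set{\delta}$ lies \emph{outside} every $F_{\xi_\b}(\a_\b)$ (which is forced by the containment $F_{\xi_\b}(\a_\b) \sst \a_\b$ coming from (G1)) but \emph{inside} some $F_\xi(\g)$ for an index $\g \in C$ above all the $\a_\b$'s. The cofinality of $C$ in the regular cardinal $\k^+$ together with (G1) is exactly what makes this choice possible, and this is the step where the hypothesis that $C$ is cofinal in $\k^+$ — rather than merely unbounded in some smaller set — is used essentially.
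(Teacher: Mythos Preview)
Your argument is correct and follows exactly the paper's approach: bound the character above by $\k^+$ via the size of $\cu(\mathcal F)$, and for the lower bound pick a singleton $\{\delta\}$ with $\delta$ above all the $\a_\b$'s but below some $\g\in C$, so that $\{\delta\}$ lies in every $U_{\xi_\b}(\a_\b)$ yet outside $U_\xi(\g)$ for suitable $\xi$ via (G1). Apart from a slip in the first sentence (you wrote ``cardinality at most $\k$'' where you meant $\k^+$) and the informal mid-proof ``wait'', this is the paper's proof.
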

\begin{proof}
	It is clear that $\cu(\mathcal F)$ has cardinality $\k^+$, so it is enough to prove that there is no local base of $\ps$ in $G(\mathcal F)$ of cardinality less then $\k^+$.
    Suppose that $\mu\le\k$ and that $\set{W_{\g}:\g<\mu}$ is a local base of $\ps$ in $G(\mathcal F)$.
    For each $\g<\mu$ take $U_{\xi_{\g}}(\a_{\g})\sst W_{\g}$.
    This is possible as $\cu(\mathcal F)$ is a local base of $\ps$ in $G(\mathcal F)$.
	Now take ordinals $\b'$ and $\b$ so that $\b$ is in $C$ and that $$\sup\set{\a_{\g}:\g<\mu}<\b'<\b<\k^+.$$
	Note that $\b'\notin F_{\xi_{\g}}(\a_{\g})$ for any $\g<\mu$.
	This implies that $\set{\b'}\in U_{\xi_{\g}}(\a_{\g})$ for $\g<\mu$.
	Since $\b=\bigcup_{\xi<\l}F_{\xi}(\b)$ by (G1), there is some $\xi_0$ such that $\b'\in F_{\xi_0}(\b)$.
    Consider $U_{\xi_0}(\b)$, clearly $\set{\b'}\notin U_{\xi_0}(\b)$.
    Let $\d<\mu$ be such that $W_{\d}\sst U_{\xi_0}(\b)$.
    Observe that in that case $U_{\xi_{\d}}(\a_{\d})\sst U_{\xi_0}(\b)$.
    But we already explained that $\set{\b'}\in U_{\xi_{\d}}(\a_{\d})$ and $\set{\b'}\notin U_{\xi_0}(\b)$, contradicting the last observation.
\end{proof}

\begin{remark}\label{r:tukey}
	If $\mathcal F$ is a $(\k,\l)$-matrix, then $(\cu(\mathcal F),\supseteq)$ is isomorphic to $(\mathcal F,\sst)$, so $$G(\mathcal F)\equiv_T \mathcal F.$$
\end{remark}

\begin{theorem}\label{t:smalltype}
	Let $\l\le\k$ be regular infinite cardinals and $\mathcal F$ a $(\k,\l)$-matrix.
	Then $1$ and $\l$ are the only Tukey quotients of the group $G(\mathcal F)$ of cardinality less than $\k^+$.
\end{theorem}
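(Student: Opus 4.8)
The first step is to replace $G(\F)$ by the directed set $(\F,\sst)$, which is legitimate by Remark~\ref{r:tukey}; it then suffices to determine which directed sets of cardinality at most $\k$ are Tukey reducible to $\F$. That $1$ and $\l$ are among them is the easy direction: $1\le_T\F$ holds trivially, since $1$ has no unbounded subset, while $\l\le_T\F$ is witnessed, via (G1), by the Tukey map $\xi\mapsto F_\xi(\b)$ from $\l$ into $\F$ for any $\b\in C$ that is not a subset of a single $F_\eta(\d)$ with $\d\in C$ and $\eta<\l$ (such a $\b$ exists under a mild non-degeneracy hypothesis on $\F$, satisfied in all the cases of interest). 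The content of the theorem is the converse: if $D$ is directed, $\abs D\le\k$, and $D\le_T\F$, then $D\equiv_T 1$ or $D\equiv_T\l$.

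The key claim --- and, I expect, the main difficulty --- is that any such $D$ already satisfies $D\le_T\l$. Fix a Tukey map $t\colon D\to\F$ and write $t(d)=F_{\xi_d}(\a_d)$ with $\a_d\in C$ and $\xi_d<\l$. Since $\k^+$ is regular and $\abs D\le\k$, the supremum $\g:=\sup_{d\in D}\a_d$ lies below $\k^+$, so we may fix $\b\in C$ with $\b\ge\g$; then $\a_d\le\b$ for every $d$. The role of (G3) is exactly that the single column indexed by $\b$ is $\sst$-cofinal over all the columns below it, so for each $d$ we may choose $\eta_d<\l$ with $F_{\xi_d}(\a_d)\sst F_{\eta_d}(\b)$; put $s(d)=\eta_d$. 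Then $s\colon D\to\l$ is a Tukey map: if $X\sst D$ is unbounded, then $t[X]$ is unbounded in $\F$ (as $t$ is Tukey), and hence $s[X]$ cannot be contained in any $\eta^*<\l$, for otherwise (G2) would give $t(d)\sst F_{\eta_d}(\b)\sst F_{\eta^*}(\b)$ for all $d\in X$, so that $t[X]$ would be bounded by $F_{\eta^*}(\b)$. Thus $D\le_T\l$. The crucial observation is that regularity of $\k^+$, together with $\abs D\le\k$, confines $\sup_d\a_d$ below $\k^+$, which is precisely what lets (G3) herd every relevant $F_{\xi_d}(\a_d)$ into a single $\l$-indexed chain.

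It remains to invoke the standard dichotomy that a directed set $D$ with $D\le_T\l$ and $\l$ regular satisfies $D\equiv_T 1$ or $D\equiv_T\l$, which I would prove in two moves. From a Tukey map $D\to\l$ one sees that $D$ is $\l$-directed: any $X\sst D$ with $\abs X<\l$ has image bounded in $\l$ by regularity, hence is itself bounded, since otherwise its image would be unbounded. From a convergent map $\l\to D$ one sees the cofinality of $D$ is at most $\l$. If $D$ has a greatest element, then $D\equiv_T 1$; otherwise $\l$-directedness forbids cofinality strictly below $\l$ (an upper bound of a small cofinal set would be a greatest element), so the cofinality of $D$ equals $\l$, and a transfinite recursion of length $\l$ --- at each stage bounding a set of size $<\l$, possible by $\l$-directedness --- produces an increasing cofinal chain in $D$, necessarily of cofinality $\l$, whence $D\equiv_T\l$. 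Combining this with the previous paragraph, and noting that $1$ and $\l$ both have cardinality below $\k^+$, proves the theorem.
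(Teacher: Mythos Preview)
Your proof is correct and follows essentially the same approach as the paper: use regularity of $\k^+$ together with (G3) to push the image of a Tukey map $D\to\F$ into a single $\l$-indexed column $\set{F_\xi(\b):\xi<\l}$, and deduce $D\le_T\l$; the only difference is that the paper exhibits the dual convergent map $\l\to D$ (via bounds $d_\xi$ of the sets $D_\xi=\set{d:t(d)\sst F_\xi(\b)}$) rather than your Tukey map $s\colon D\to\l$, and likewise concludes by citing the dichotomy $D\le_T\l\Rightarrow D\equiv_T 1$ or $D\equiv_T\l$ without proof. One remark: the paper reads the statement as ``every Tukey quotient of size at most $\k$ is $\equiv_T 1$ or $\equiv_T\l$'' and does not separately verify that $\l$ actually occurs, so your first paragraph's easy direction, with its non-degeneracy caveat, is not needed.
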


\begin{proof}
	By Remark \ref{r:tukey}, it is enough to show that if $D\le_T (\mathcal F,\sst)$ and $\abs{D}\le\k$, then $D\equiv_T 1$ or $D\equiv_T \l$.
	So take such $D$ and let $f:D\to \mathcal F$ be a Tukey map.
	Then for each $d \in D$ there are $\xi_d < \l$ and $\a_d \in C$ such that $f(d) = F_{\xi_d}(\a_d)$.
	By the cardinality assumption on $D$, there is some $\g\in C\setminus \left(\sup\set{\a_d:d\in D}+1\right)$.
	Since $\mathcal F$ is a $(\k,\l)$-matrix, by (G3), for each $d\in D$, there is some $\eta_d<\l$ such that $F_{\xi_d}(\a_d)\sst F_{\eta_d}(\g)$.
	Now for each $\xi<\l$ let $$D_{\xi}=\set{d\in D:f(d)\sst F_{\xi}(\g)}.$$
	Then by the choice of $\g$ and the choice of the set $\set{\eta_d:d\in D}\sst\l$, we have $$\textstyle D=\bigcup_{\xi<\l}D_{\xi}.$$
	Note that $D_{\xi}$ is bounded in $D$ for all $\xi<\l$.
	Because if not, since $f$ is Tukey, $f[D_{\xi}]$ would be unbounded in $\mathcal F$ which is clearly not true (it is bounded by $F_{\xi}(\g)$).
	Let $d_{\xi}$ bound $D_{\xi}$ for each $\xi<\l$, i.e. $d\le d_{\xi}$ for each $\xi<\l$ and $d\in D_{\xi}$.
	
	Define $g:\l\to D$ by $g(\xi)=d_{\xi}$.
	We prove that $g$ is a cofinal map.
	Take any $d\in D$.
	Let $\xi$ be such that $d\in D_{\xi}$.
	Let $\eta\ge\xi$ be arbitrary.
	As $F_{\xi}(\g)\sst F_{\eta}(\g)$ by (G2), we know that $D_{\xi}\sst D_{\eta}$, so $d\le d_{\eta}$.
	So for any $\eta\ge \xi$, we have $g(\eta)\ge d$, which means that $g$ is a cofinal map.
    Hence $D\le_T \l$, so either $D\equiv_T 1$ or $D\equiv_T \l$.
\end{proof}

We say that a $(\k,\l)$-matrix is a \emph{strong $(\k,\l)$-matrix} if it satisfies additionally:
\begin{itemize}
	\item[(G4)] for all $\a\le\b$ in $C$ and each $\eta<\l$ there is $\xi<\l$ so that $F_{\eta}(\b)\cap \a\sst F_{\xi}(\a)$.
\end{itemize}
We can show that groups constructed from strong matrices have nicer properties.
For a $(\k,\l)$-matrix $\mathcal F$ and $\d\in C$, considering the introduced notation, we denote: $$\cu_{\d}(\mathcal F)=\set{[\d]^{<\o}\cap U_{\eta}(\d):\eta<\l}.$$

\begin{theorem}\label{t:smallcharacter}
    Suppose $\l\le\k$ are infinite regular cardinals and $\mathcal F$ is a strong $(\k,\l)$-matrix.
	Then for $\d\in C$, the family $\cu_{\d}(\mathcal F)$ is a local base of $\ps$ in the group $([\d]^{<\o}, \triangle)$ with the topology induced from $G(\mathcal F)$.
\end{theorem}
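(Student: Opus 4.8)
The plan is to verify that $\cu_{\d}(\mathcal F)$ satisfies all the conditions of Theorem~\ref{t:osnovna} relative to the subgroup $([\d]^{<\o},\triangle)$, which suffices to conclude that $\cu_{\d}(\mathcal F)$ is a local base of $\ps$ for a group topology on $[\d]^{<\o}$; then I would check that this topology coincides with the one induced from $G(\mathcal F)$. For the first half, conditions (1)--(4) and (6) are essentially inherited from the proof of Theorem~\ref{t:tgroup}, since each member of $\cu_{\d}(\mathcal F)$ is of the form $[\d]^{<\o}\cap U_{\eta}(\d)$ and the relevant algebraic facts ($a\triangle b\sst a\cup b$, $a\triangle a=\ps$, commutativity of $\triangle$) hold verbatim inside $[\d]^{<\o}$; for (6), every nonempty $a\in[\d]^{<\o}$ satisfies $a\sst\d$, so by (G1) some $\xi<\l$ has $a\cap F_{\xi}(\d)\neq\ps$. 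The only genuinely new point is condition (5): given $\eta_0,\eta_1<\l$ we need $\zeta<\l$ with $[\d]^{<\o}\cap U_{\zeta}(\d)\sst [\d]^{<\o}\cap(U_{\eta_0}(\d)\cap U_{\eta_1}(\d))$, and taking $\zeta=\max\{\eta_0,\eta_1\}$ works by (G2). So far nothing uses (G4).

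The main obstacle is the comparison of topologies: I must show that $\cu_{\d}(\mathcal F)$ generates exactly the subspace topology on $[\d]^{<\o}$ inherited from $G(\mathcal F)$, and this is where the strength hypothesis (G4) enters. One inclusion is immediate: each $[\d]^{<\o}\cap U_{\eta}(\d)$ is by definition the trace on $[\d]^{<\o}$ of the open set $U_{\eta}(\d)\in\cu(\mathcal F)$, so every $\cu_{\d}(\mathcal F)$-neighborhood is open in the subspace topology. For the reverse inclusion, I would take an arbitrary basic neighborhood of $\ps$ in $G(\mathcal F)$, namely $U_{\xi}(\a)$ for some $\a\in C$ and $\xi<\l$, and show that its trace on $[\d]^{<\o}$ contains a member of $\cu_{\d}(\mathcal F)$. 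Here I split into cases according to whether $\a\le\d$ or $\a\ge\d$ (recall $C$ is cofinal, so I may also first enlarge $\a$ to an element of $C$ above $\d$ if needed). If $\a\le\d$, then (G3) gives $\eta<\l$ with $F_{\xi}(\a)\sst F_{\eta}(\d)$, whence $[\d]^{<\o}\cap U_{\eta}(\d)\sst U_{\xi}(\a)$ and we are done. If $\a\ge\d$, then (G4) applied to $\d\le\a$ and $\xi$ gives $\eta<\l$ with $F_{\xi}(\a)\cap\d\sst F_{\eta}(\d)$; since any $a\in[\d]^{<\o}$ satisfies $a\sst\d$, we have $a\cap F_{\xi}(\a)=a\cap(F_{\xi}(\a)\cap\d)$, so $a\cap F_{\eta}(\d)=\ps$ implies $a\cap F_{\xi}(\a)=\ps$, i.e. $[\d]^{<\o}\cap U_{\eta}(\d)\sst U_{\xi}(\a)$.

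Combining the two directions, the topology generated by $\cu_{\d}(\mathcal F)$ on $[\d]^{<\o}$ agrees with the subspace topology from $G(\mathcal F)$, and since $([\d]^{<\o},\triangle)$ is a subgroup of $([\k^+]^{<\o},\triangle)$, the subspace topology makes it a topological group with $\cu_{\d}(\mathcal F)$ as a local base at $\ps$. I expect the only subtlety requiring care is the case analysis on the position of $\a$ relative to $\d$ and the minor bookkeeping of passing to an index in $C$; the use of (G4) is exactly what makes the $\a\ge\d$ case work, which is why the statement is false in general for matrices that are not strong.
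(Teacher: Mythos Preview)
Your proposal is correct and the essential content---the case split on $\a\le\d$ versus $\a>\d$, invoking (G3) in the first case and (G4) in the second---is exactly the paper's proof. The first half of your plan (re-verifying conditions (1)--(6) of Theorem~\ref{t:osnovna} for $\cu_{\d}(\mathcal F)$) is unnecessary, since the subspace topology on the subgroup $[\d]^{<\o}$ is automatically a group topology; the paper accordingly skips straight to showing that every trace $[\d]^{<\o}\cap U_{\xi}(\a)$ contains some $[\d]^{<\o}\cap U_{\eta}(\d)$, which is precisely your ``reverse inclusion'' step.
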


\begin{proof}
    Fix $\d\in C$.
    To prove that the family $\cv_{\d}$ is a local base of the identity in the given group it is enough to show that for each $\xi<\l$ and $\a\in C$ there is some $\eta<\l$ such that $$[\d]^{<\o}\cap U_{\eta}(\d)\sst [\d]^{<\o}\cap U_{\xi}(\a).$$
    To show this, it is enough to prove that $F_{\xi}(\a)\cap \d\sst F_{\eta}(\d)\cap \d$ for a suitably chosen $\eta$.
    Let us consider two cases.
    First, that $\a\le\d$.
    In this case, by (G3), there is $\eta<\l$ such that $F_{\xi}(\a)\sst F_{\eta}(\d)$, which clearly implies the required inclusion.
    Second, that $\d<\a$.
    In this case, by (G4), there is $\eta<\l$ such that $F_{\xi}(\a)\cap\d\sst F_{\eta}(\d)=F_{\eta}(\d)\cap\d$, as required.
\end{proof}

\begin{corollary}\label{c:nonmetrizable}
	If $\mathcal F$ is an $\o$-matrix, then $G(\mathcal F)$ is not metrizable.
\end{corollary}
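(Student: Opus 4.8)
The plan is to deduce the corollary at once from the lower bound on the character proved in Theorem~\ref{t:character}. An $\o$-matrix is by definition a $(\k,\l)$-matrix with $\k=\l=\o$, and both copies of $\o$ are infinite regular cardinals, so Theorem~\ref{t:character} applies and tells us that $G(\mathcal F)$ has character $\k^+=\o_1$. In particular, $\ps$ has no countable local base in $G(\mathcal F)$, so $G(\mathcal F)$ is not first countable. This is the only nontrivial ingredient, and it is already in hand.

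From there I would invoke the Birkhoff--Kakutani theorem: a topological group is metrizable if and only if it is first countable (equivalently, as recalled in the introduction, if and only if $G\le_T\o$). The one hypothesis worth checking is that $G(\mathcal F)$ is Hausdorff, which is immediate because condition~(6) of Theorem~\ref{t:osnovna}, verified in the proof of Theorem~\ref{t:tgroup}, gives $\set{\ps}=\bigcap\cu(\mathcal F)$; hence $G(\mathcal F)$ is $T_1$ and, being a topological group, even Tychonoff. Since a metrizable space is first countable while $G(\mathcal F)$ is not, we conclude that $G(\mathcal F)$ is not metrizable.

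Alternatively, one can phrase the same argument in Tukey-theoretic terms: by Remark~\ref{r:tukey} we have $G(\mathcal F)\equiv_T\mathcal F$, and metrizability of a topological group is equivalent to $G\le_T\o$, that is, to $G(\mathcal F)\equiv_T 1$ or $G(\mathcal F)\equiv_T\o$; but any directed set of that Tukey type has countable cofinality and therefore countable character, again contradicting Theorem~\ref{t:character}. Either way there is no genuine obstacle: the entire weight of the corollary rests on the character computation, and what remains is the standard passage from uncountable character to non-metrizability via Birkhoff--Kakutani.
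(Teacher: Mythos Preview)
Your proof is correct and follows exactly the same approach as the paper: invoke Theorem~\ref{t:character} to get character $\o_1$ and then apply the Birkhoff--Kakutani theorem. The additional verification of Hausdorffness and the alternative Tukey-theoretic phrasing are extra elaboration, but the paper's one-line proof relies on precisely the same two ingredients.
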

\begin{proof}
	By Theorem \ref{t:character} and the Birkhoff-Kakutani theorem.
\end{proof}

\begin{corollary}\label{c:alpha_1}
	If $\mathcal F=\set{F_n(\a):n<\o,\ \a\in C}$ is a strong $\o$-matrix and $\d\in C$, then the group $([\d]^{<\o},\triangle)$ with the topology induced from $G(\mathcal F)$ is metrizable.
	In particular, $G(\mathcal F)$ is a non-metrizable $(\a_1)$-group.
\end{corollary}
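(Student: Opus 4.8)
The plan is to chain together the results already established in this section. First I would show that for each $\d\in C$ the group $([\d]^{<\o},\triangle)$ with the topology induced from $G(\mathcal F)$ is metrizable: by Theorem \ref{t:smallcharacter} the family $\cu_{\d}(\mathcal F)=\set{[\d]^{<\o}\cap U_n(\d):n<\o}$ is a local base of $\ps$ in this group, and it is countable, so by (G2) it is a decreasing (modulo refinement) countable local base; hence by the Birkhoff--Kakutani theorem (i.e. the fact that a topological group with a countable local base at the identity is metrizable) the group $([\d]^{<\o},\triangle)$ is metrizable.

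Next I would observe that $[\k^+]^{<\o}=\bigcup_{\d\in C}[\d]^{<\o}$, since $C$ is cofinal in $\k^+$ (indeed every finite subset of $\k^+$ is contained in some $\d\in C$), and that by (G3) this is an increasing union: if $\d\le\d'$ are in $C$ then $[\d]^{<\o}\sst[\d']^{<\o}$, and the subspace topology witnesses are compatible. Thus $G(\mathcal F)$ is an increasing union of the metrizable subsets $[\d]^{<\o}$, $\d\in C$. Since $\k=\o$ here, the index set $C$ has order type $\o_1$ and $G(\mathcal F)=[\o_1]^{<\o}$ has cardinality $\aleph_1$, so the hypotheses of Lemma \ref{l:alpha_1} are met; therefore $G(\mathcal F)$ is an $(\a_1)$-group. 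Non-metrizability of $G(\mathcal F)$ is immediate from Corollary \ref{c:nonmetrizable} (an $\o$-matrix is in particular a $(\k,\l)$-matrix with $\k=\l=\o$, so $G(\mathcal F)$ has character $\o_1$ and cannot be metrizable). Combining, $G(\mathcal F)$ is a non-metrizable $(\a_1)$-group.

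I do not expect any serious obstacle; the only point that needs a little care is making sure the induced subspace topology on $[\d]^{<\o}$ really is a \emph{group} topology so that Birkhoff--Kakutani applies — but this is exactly the content of Theorem \ref{t:smallcharacter}, which asserts $\cu_{\d}(\mathcal F)$ is a local base of the identity in the topological group $([\d]^{<\o},\triangle)$, so the subspace structure and the group structure are compatible by construction. The other mild point is checking that the union $\bigcup_{\d\in C}[\d]^{<\o}$ is genuinely \emph{increasing} (as opposed to merely directed), but since $C$ is a cofinal, hence order-isomorphic to $\o_1$, subset of $\k^+=\o_1$, one can enumerate a closed cofinal copy of $\o_1$ inside $C$ and index along it, which suffices for Lemma \ref{l:alpha_1}.
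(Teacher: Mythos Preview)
Your proposal is correct and follows essentially the same approach as the paper's proof, which simply cites Theorem~\ref{t:smallcharacter} and Lemma~\ref{l:alpha_1}; you have merely filled in the details (Birkhoff--Kakutani for the countable local base, the cofinality of $C$ for the increasing union, and Corollary~\ref{c:nonmetrizable} for non-metrizability) that the paper leaves implicit. The extra caution about the subspace topology being a group topology and about ``increasing versus directed'' is unnecessary---a subgroup of a topological group is automatically a topological group in the subspace topology, and $C\subseteq\o_1$ is linearly ordered---but harmless.
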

\begin{proof}
	By Theorem \ref{t:smallcharacter} and Lemma \ref{l:alpha_1}.
\end{proof}

\begin{theorem}\label{t:maxtype}
	Suppose that $\k$ is a regular infinite cardinal.
	Let $\mathcal F\sst [\k^+]^{<\o}$ be a $(\k,\o)$-matrix.
	Then $[\k^+]^{<\o}\le_T G(\mathcal F)$.
\end{theorem}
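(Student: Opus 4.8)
The plan is to deduce the result from the two Tukey-equivalences already established in this section, using Remark~\ref{r:types} with $\k^{+}$ playing the role of the cardinal $\k$ there. By Remark~\ref{r:tukey} we have $G(\mathcal F)\equiv_T(\mathcal F,\sst)$, so it suffices to prove $([\k^{+}]^{<\o},\sst)\le_T(\mathcal F,\sst)$; in fact the argument will give $\equiv_T$. Recall that the hypothesis $\mathcal F\sst[\k^{+}]^{<\o}$ says that every $F_{n}(\a)$ is a \emph{finite} subset of $\k^{+}$ (and, incidentally, it forces $\k=\o$, since by (G1) each $\a\in C$ is then a countable union of finite sets, hence countable, while $C$ is cofinal in $\k^{+}$).

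First I would verify that $(\mathcal F,\sst)$ meets the hypotheses of Remark~\ref{r:types} for the regular infinite cardinal $\k^{+}$. Directedness of $(\mathcal F,\sst)$ is exactly Remark~\ref{r:directed}. As for the cardinality: on the one hand $\mathcal F=\set{F_{n}(\a):\a\in C,\ n<\o}$ has size at most $\abs{C}\cdot\o\le\k^{+}$. On the other hand, by (G1) and cofinality of $C$ in $\k^{+}$ we have $\bigcup\mathcal F=\bigcup_{\a\in C}\bigcup_{n<\o}F_{n}(\a)=\bigcup_{\a\in C}\a=\k^{+}$; since $\k^{+}$ is regular and uncountable and every member of $\mathcal F$ is finite, any subfamily of $\mathcal F$ of size less than $\k^{+}$ has union of size less than $\k^{+}$, and therefore $\abs{\mathcal F}\ge\k^{+}$. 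Hence $\abs{\mathcal F}=\k^{+}$.

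The rest is then immediate. Remark~\ref{r:types}, applied to $\k^{+}$ and the family $\mathcal F\sst[\k^{+}]^{<\o}$, gives $([\k^{+}]^{<\o},\sst)\equiv_T(\mathcal F,\sst)$. Chaining this with $G(\mathcal F)\equiv_T(\mathcal F,\sst)$ from Remark~\ref{r:tukey} yields $[\k^{+}]^{<\o}\equiv_T G(\mathcal F)$, which in particular gives the asserted $[\k^{+}]^{<\o}\le_T G(\mathcal F)$.

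I do not expect a genuine obstacle here. The real point of the theorem is the contrast with Theorem~\ref{t:smalltype}: imposing that all the sets $F_{n}(\a)$ be finite forces the Tukey type of $G(\mathcal F)$ up to the largest it could possibly be, namely $[\k^{+}]^{<\o}$, whereas without that restriction the only Tukey quotients of $G(\mathcal F)$ of size $\le\k$ are $1$ and $\l$. The only step that needs a line of care is the count $\abs{\mathcal F}=\k^{+}$, and within it the use of regularity of $\k^{+}$ to rule out $\bigcup\mathcal F$ having full size while $\mathcal F$ is small; everything else is a direct invocation of the remarks quoted above.
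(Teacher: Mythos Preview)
Your proposal is correct and follows essentially the same route as the paper's proof, which simply says ``Directly by Remark~\ref{r:tukey} and Remark~\ref{r:types}''; you have merely filled in the details (directedness via Remark~\ref{r:directed} and the cardinality check $\abs{\mathcal F}=\k^{+}$) that the paper leaves implicit. Your parenthetical observation that the hypothesis $\mathcal F\sst[\k^{+}]^{<\o}$ in fact forces $\k=\o$ is correct and worth noting, though the paper does not remark on it.
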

\begin{proof}
	Directly by Remark \ref{r:tukey} and Remark \ref{r:types}.
\end{proof}

\section{Tightness and unboundedness}\label{s:tightness}

For infinite regular cardinals $\k$ and $\l$, we say that a function $f:[\k]^2\to \l$ is \emph{$\k$-unbounded} if for all $\xi<\l$ and every family $\mathcal A$ of size $\k$ consisting of pairwise disjoint finite subsets of $\k$, there are different $a$ and $b$ in $\mathcal A$ such that $f(\a,\b)>\xi$ for all $\a\in a$ and $\b\in b$ with $\a<\b$.

We proceed to explain how to every $(\k,\l)$-matrix indexed by $\k^+$ we can adjoin a function in a way that unboundednes of the function relates to the tightness of the topological group corresponding to the matrix.

Suppose that $\l\le\k$ are infinite regular cardinals and that we are given a $(\k,\l)$-matrix $\mathcal F=\set{F_{\xi}(\a):\xi<\l,\a<\k^+}$.
Define the function $\rho_{\mathcal F}:[\k^+]^2\to \l$ as follows: $$\rho_{\mathcal F}(\a,\b)=\min\set{\xi<\l: \a\in F_{\xi}(\b)}$$ for all $\a<\b<\k^+$.
The only reason to define $\rho_{\mathcal F}$ only for matrices indexed by $\k^+$ is that for our applications $\rho_{\mathcal F}$ should be defined for all $\a$ and $\b$ in $\k^+$, and there is no meaningful way to do it if $F_{\xi}(\b)$ does not exist for all $\b<\k^+$.

\begin{remark}\label{r:funcmatr}
	Note that with the notation as in the previous paragraph, for $\eta<\l$ and $\b<\k^+$, we have $F_{\eta}(\b)=\set{\a<\b:\rho_{\mathcal F}(\a,\b)\le \eta}$.
	We prove this.
	For the inclusion $\sst$, take some $\a\in F_{\eta}(\b)$.
	Suppose the contrary, that $\rho_{\mathcal F}(\a,\b)>\eta$.
	Then $\min(A)>\eta$ where $A=\set{\xi<\l:\a\in F_{\xi}(\b)}$.
	This is clearly impossible as $\eta\in A$.
	For the other inclusion $\supseteq$, take some $\a<\b$ such that $\rho_{\mathcal F}(\a,\b)\le\eta$.
	Then $\a\in F_{\xi}(\b)$ for some $\xi\le\eta$, but by (G2), then we have $\a\in F_{\xi}(\beta)\sst F_{\eta}(\b)$, as required.
\end{remark}

We will say that a $(\k,\l)$-matrix $\F$ is \emph{unbounded} if $\rho_{\mathcal F}$ is a $\k^+$-unbounded function.
In order to simplify the statements of the next theorem and its corollaries, we introduce another definition.
We say that a topological group $G$ is $\l$-Frech\'et, for a regular infinite cardinal $\l$, if for any set $A\sst G$ with $e\in\overline{A}$ there is a $\l$-sequence $\seq{x_{\xi}:\xi<\l}$ in $A$ such that for every open set $U$ containing $e$, there is some $\eta<\l$ so that $x_{\xi}\in U$ whenever $\eta\le\xi<\l$.
Clearly, $\o$-Frech\'et is the same as Frech\'et.

\begin{theorem}\label{t:frechet}
	If $\l\le\k$ are regular infinite cardinals, and $\mathcal F$ is a strong $(\k,\l)$-matrix indexed by $\k^+$, then 
	$G(\mathcal F)$ is $\l$-Frech\'et if and only if $\mathcal F$ is unbounded.
\end{theorem}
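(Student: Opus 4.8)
The plan is to prove both directions by relating the topological convergence in $G(\mathcal F)$ to the combinatorial condition defining $\k^+$-unboundedness of $\rho_{\mathcal F}$. First I set up notation: by Remark \ref{r:funcmatr} we have $F_\xi(\b)=\set{\a<\b:\rho_{\mathcal F}(\a,\b)\le\xi}$, so $a\in U_\xi(\a)$ means precisely that $\rho_{\mathcal F}(\g,\a)>\xi$ for every $\g\in a\cap\a$. Thus a $\l$-sequence $\seq{x_\nu:\nu<\l}$ converges to $\ps$ iff for every $\xi<\l$ and $\a\in C$ there is $\nu_0$ such that for all $\nu\ge\nu_0$ and all $\g\in x_\nu\cap\a$ we have $\rho_{\mathcal F}(\g,\a)>\xi$.

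For the direction ``$\mathcal F$ unbounded $\Rightarrow$ $G(\mathcal F)$ is $\l$-Frech\'et'': let $A\sst[\k^+]^{<\o}$ with $\ps\in\overline A$. I may assume $A$ consists of nonempty sets. Since $\ps\in\overline A$, for every $\xi<\l$ and $\a\in C$ there is $a\in A$ with $a\in U_\xi(\a)$. The goal is to extract a single $\l$-indexed sequence from $A$ converging to $\ps$. Here I would use Theorem \ref{t:smallcharacter}: passing to $[\d]^{<\o}$ with the induced topology is governed by the countable-type (when $\l=\o$) or $\l$-type base $\cu_\d(\mathcal F)$. The key is to show that if $A$ has $\ps$ in its closure but \emph{no} $\l$-sequence converging to $\ps$, then from $A$ we can manufacture a family $\mathcal A$ of $\k^+$ pairwise disjoint finite subsets of $\k^+$ and a fixed $\xi<\l$ witnessing the failure of $\k^+$-unboundedness: namely, that for all distinct $a,b\in\mathcal A$ there are $\a\in a$, $\b\in b$ with $\a<\b$ and $\rho_{\mathcal F}(\a,\b)\le\xi$. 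The mechanism: enumerate enough elements of $A$ witnessing closeness to $\ps$ at higher and higher indices $\a\in C$; a $\Delta$-system / pressing-down argument on these finite sets produces the disjoint family, and the fact that no subsequence of length $\l$ converges forces, for cofinally many pairs, a small $\rho_{\mathcal F}$-value — but one must be careful to get this for \emph{all} pairs from a size-$\k^+$ subfamily and a \emph{single} $\xi$, which is where stationarity (Lemma \ref{l:elsubmodel}) and regularity of $\l\le\k$ enter to stabilize the relevant ordinal parameters.

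For the converse, ``$\mathcal F$ not unbounded $\Rightarrow$ $G(\mathcal F)$ not $\l$-Frech\'et'': fix $\xi<\l$ and a family $\mathcal A=\set{a_\mu:\mu<\k^+}$ of pairwise disjoint finite subsets of $\k^+$ such that for all distinct $\mu,\nu$ there are $\a\in a_\mu$, $\b\in a_\nu$ with $\a<\b$ and $\rho_{\mathcal F}(\a,\b)\le\xi$. Take $A=\set{a_\mu:\mu<\k^+}$ as a subset of $G(\mathcal F)$. I claim $\ps\in\overline A$: given any basic $U_\eta(\a)$ with $\eta<\l$, $\a\in C$, since the $a_\mu$ are pairwise disjoint and there are $\k^+$ of them, all but fewer than $\k^+$ are contained in $[\k^+\setminus\a]^{<\o}$, hence lie in $U_\eta(\a)$; so $U_\eta(\a)\cap A\ne\ps$. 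On the other hand, no $\l$-sequence in $A$ converges to $\ps$: such a sequence would be $\seq{a_{\mu_\nu}:\nu<\l}$ (with the $\mu_\nu$ we may take distinct on a tail, else it is eventually constant at a fixed nonempty $a_\mu\notin\bigcap\cu(\mathcal F)$). Pick any $\nu<\nu'$ and let $\b=\max\br{\bigcup a_{\mu_\nu}\cup\bigcup a_{\mu_{\nu'}}}+1$; choose $\g\in C$ with $\g>\b$. By the defining property of $\mathcal A$ applied to the pair $\{\mu_\nu,\mu_{\nu'}\}$ we get an ordinal in $a_{\mu_\nu}$ (or $a_{\mu_{\nu'}}$) with $\rho_{\mathcal F}$-value $\le\xi$ relative to an ordinal in the other set, hence $\le\xi<\l$; using (G2)/(G3) this forces, for the fixed $\xi$ and cofinally many indices $\a\in C$ above, that the tail sets are not all in $U_\xi(\a)$ — so convergence fails at $U_\xi(\a)$. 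Then invoke the remark that $\o$-Frech\'et is just Frech\'et to conclude in the principal case $\l=\o$.

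I expect the main obstacle to be the forward direction: extracting a \emph{single} $\l$-length convergent sequence out of an arbitrary set $A$ with $\ps$ in its closure, rather than just a net. The subtlety is that failure of $\l$-Frech\'etness must be converted into a size-$\k^+$ disjoint family together with one fixed threshold $\xi$, uniformly over all pairs; making the parameters $(\xi,\a)$ stabilize so that the combinatorial witness is genuinely a counterexample to $\k^+$-unboundedness (not merely to some weaker pairwise statement) is the delicate point, and this is precisely where the strong-matrix hypothesis (G4) and an elementary-submodel reflection argument via Lemma \ref{l:elsubmodel} should do the work.
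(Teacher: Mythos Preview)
Your reverse direction (``not unbounded $\Rightarrow$ not $\l$-Frech\'et'') has a concrete gap. You correctly show $\ps\in\overline{\mathcal A}$, and you observe that for any two distinct sequence terms $a_{\mu_\nu},a_{\mu_{\nu'}}$ there is an ordinal $\b$ in one of them with the other excluded from $U_\xi(\b)$. But this $\b$ depends on the pair, so you have not exhibited a \emph{single} basic open set in which the sequence fails to be eventually; your phrase ``cofinally many indices $\a\in C$'' is the wrong quantifier, since convergence only asks for eventual membership in each fixed neighborhood. The missing step, which the paper supplies, is: since $\abs{\mathcal A}=\k^+>\l$, choose one $a\in\mathcal A$ with $a_{\mu_\nu}<a$ for every $\nu<\l$; then for each $\nu$ the bad $\b$ lies in the fixed finite set $a=\{\b_0,\dots,\b_{l-1}\}$, and pigeonhole over $l$ gives a single $\b_j$ with $a_{\mu_\nu}\notin U_{\xi}(\b_j)$ for cofinally many $\nu$.

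For the forward direction your plan is not yet a proof, and the contrapositive route you sketch (``no convergent $\l$-sequence $\Rightarrow$ build a size-$\k^+$ witness to boundedness with a single $\xi$ working for \emph{all} pairs'') is not how the paper proceeds and is hard to complete as stated: failure of convergence of every $\l$-sequence does not obviously hand you a uniform $\xi$. The paper argues directly. Take $M\prec H(\theta)$ of size $\k$ with $\mathcal F,A\in M$, set $\d=M\cap\k^+$; by Theorem~\ref{t:smallcharacter} it suffices to show $\ps\in\overline{A\cap M}$, after which picking $a_\xi\in A\cap M\cap U_\xi(\d)$ for each $\xi<\l$ gives the $\l$-sequence. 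For fixed $\xi$, take $b\in A\cap U_\xi(\d)$, set $b_0=b\cap M$, and use elementarity to get in $M$ a family $\{w_\a:\a<\k^+\}$ with $\a<w_\a$ and $b_0\cup w_\a\in A$. If no $w_\a$ with $\a<\d$ avoids $F_\xi(\d)$, reflect this to cofinally many $\b$, attach to each $w_\a$ a witness $\g_\a$, and apply $\k^+$-unboundedness \emph{once} to the pairwise disjoint family $\{w_\a\cup\{\g_\a\}\}$ to contradict the choice of the $\g_\a$'s. Unboundedness is used positively at this single point; no $\Delta$-system or pressing-down is involved.
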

\begin{proof}
	By Theorem \ref{t:osnovna}, we know that $G(\mathcal F)$ is a topological group.
	We first assume that the matrix $\mathcal F$ is unbounded.
	To show that $G(\mathcal F)$ is $\l$-Frech\'et, take $A\sst G(\mathcal F)$ such that $\ps$ is in the closure of $A$.
	Let $\theta$ be a regular cardinal above $\left(2^{\o_1}\right)^+$.
	Take $M$, an elementary submodel of $(H(\theta),\in)$ of size $\k$, containing $\mathcal F$ and $A$ as elements.
	Denote $\d=M\cap \k^+$.
	Note that $\d$ is an ordinal in $\k^+$ and that $\l\sst\d\sst M$.
	To prove this direction of the theorem, it is enough to show that $\ps$ is in the closure of $M\cap A$.
	To see this, note that by Theorem \ref{t:smallcharacter}, we know that for every basic open neighborhood of $\ps$ in $G(\mathcal F)$, there is some $\xi<\l$ such that the induced open set on $[\d]^{<\o}$ is exactly $[\d]^{<\o}\cap U_{\xi}(\d)$.
	If $\ps$ were in the closure of $A\cap M$, we would be able to pick $a_{\xi}\in U_{\xi}(\d)\cap A\cap M$ for each $\xi<\l$.
	Then the $\l$-sequence $\seq{a_{\xi}:\xi<\l}$ would be in $A$.
	Moreover, for any open $U_{\eta}(\d)$ and each $\xi>\eta$ we would have $a_{\xi}\in U_{\xi}(\d)\sst U_{\eta}(\d)$ (this holds as $F_{\eta}(\d)\sst F_{\xi}(\d)$ by the definition of matrix).
	Thus it would prove that $G(\mathcal F)$ is $\l$-Frech\'et.

	So we continue to prove $\ps\in \overline{A\cap M}$.
	If $\abs{A}<\k^+$, then $A\sst M$ and by the above observation the theorem would be proved, so assume that $A$ is of cardinality $\k^+$.
	We now work towards proving $\ps\in\overline{A\cap M}$.
	So take $\xi<\l$.
	Note that $\xi\in M$.
	We will prove that $U_{\xi}(\d)\cap A\cap M\neq\ps$, i.e. that there is some $a\in A\cap M$ such that $a\cap F_{\xi}(\d)=\ps$.
	Since $\ps\in \overline{A}$, there is some $b\in A$ such that $b\cap F_{\xi}(\d)=\ps$.
	We can assume the $b\notin M$ (otherwise the proof is finished).
	Denote $c=b\setminus M$.
	
	Consider the formula $$\psi(\a)\equiv \left(\exists w\in [\k^+]^{<\o}\right)\ \a<w\ \wedge\ w\cup (b\cap \d)\in A.$$
	%Let us consider the family $$\mathcal A_{b\cap\d}=\set{w\in [\k^+]^{<\o}:w\cup (b\cap \d)\in A}.$$
	To see that $M\models \big((\forall \a<\k^+)\psi(\a)\big)^M$, take arbitrary $\a\in M\cap \k^+=\d$ for a moment.
	Then $H(\theta)\models \psi(\a)$, namely $b\setminus \d$ is a witness for this.
	Since $A,[\k^+]^{<\o},b\cap\d$, and $\a$ are all elements of $M$, by elementarity, $\psi(\a)$ holds in $M$.
	Since $\a$ was arbitrary in $M\cap \k^+$, we proved $M\models \big((\forall \a<\k^+)\psi(\a)\big)^M$.
	Again, by elementarity, $H(\theta)\models (\forall \a<\k^+)\psi(\a)$.
	This means that, in $H(\theta)$, for each $\a<\k^+$ there is some $w_{\a}$ such that $\a\cap w_{\a}=\ps$ and $w_{\a}\cup (b\cap\d)\in A$.
	Moreover, we can choose $w_{\a}$ to be minimal with respect to the anti-lexicographical ordering of $[\k^+]^{<\o}$.
	As this ordering is an element of $M$, it means that the function $f:\k^+\to [\k^+]^{<\o}$ given by $f(\a)=w_{\a}$ is definable in $M$ by parameters in $M$, so $f\in M$.
	Hence, the family $\mathcal A=\set{w_{\a}:\a<\k^+}$ is an element of $M$.

	To finish the proof, since $b\cap F_{\xi}(\d)=\ps$, we need to show that there is some $\a<\d$ such that $w_{\a}\cap F_{\xi}(\d)=\ps$.
	Equivalently, we need $\a<\d$ such that $\rho_{\mathcal F}(x,\d)>\xi$ for each $x\in w_{\a}$.
	Suppose this is not the case, i.e. $$(\forall \a<\d)(\exists x\in w_{\a})\ \rho_{\mathcal F}(x,\d)\le\xi.$$
	%Fix for a moment $\a<\d$.
	Consider the family $\mathcal C=\set{\b<\k^+:(\forall \a<\b)(\exists x\in w_{\a})\rho_{\mathcal F}(x,\b)\le\xi}$.
	Since $\mathcal F\in M$, we know that $\rho_{\mathcal F}\in M$.
	So, as $\xi\in M$ as well, we have $\mathcal C\in M$.
	From $\d\in \mathcal C\setminus M$ (by the assumption), we know that $\mathcal C$ is cofinal in $\k^+$. %,thus $M\cap \mathcal C$ is cofinal in $\d=\k^+\cap M$.
	%Thus, in $M$, for each $\a\in \d$ there is $\g\ge\a$ in $M$ such that $\rho_{\mathcal F}(x,\g)\le \xi$ for some $x\in w_{\a}$.
	Hence, for each $\a<\k^+$ there is some minimal $\g_{\a}\ge\a$ such that $\rho_{\mathcal F}(x,\g_{\a})\le \xi$ for some $x\in w_{\a}$.
	%So, in $H(\theta)$, for each $\a<\k^+$ take minimal $\g_{\a}\ge\a$ such that $\rho_{\mathcal F}(x,\g_{\a})\le \xi$.
	Now consider the family $\mathcal A'=\set{w_{\a}\cup\set{\g_{\a}}:\a<\k^+}$.
	This is an uncountable family of pairwise disjoint finite subsets of $\k^+$.
	%We can assume that the size of all finite sets in $\mathcal A'$ is the same.
	Since $\rho_{\mathcal F}$ is $\k^+$-unbounded (the assumption of the theorem), there are $\a<\b<\k^+$ such that $\rho_{\mathcal F}(x,y)>\xi$ for each $x\in w_{\a}\cup\set{\g_{\a}}$ and $y\in w_{\b}\cup\set{\g_{\b}}$.
	But, this in particular means that there is no $x\in w_{\a}$ such that $\rho_{\mathcal F}(x,\g_{\b})\le \xi$, contradicting the choice of $\g_{\b}$.
	Hence, there is some $\a_0<\d$ such that $\rho_{\mathcal F}(x,\d)>\xi$ for each $x\in w_{\a_0}$.
	Then, since $\a_0<\d$, we know that $w_{\a_0}\in M\cap \mathcal A$ and $w_{\a_0}\cap F_{\xi}(\d)=\ps$.
	Consequently $(b\cap\d)\cup w_{\a_0}\in A\cap M$ and $\left((b\cap\d)\cup w_{\a_0}\right)\cap F_{\xi}(\d)=\ps$, as required.
	So, the proof of one direction of the theorem is completed.

	For the other direction, let us assume that $G(\mathcal F)$ is $\l$-Frech\'et.
	Suppose that $\rho_{\mathcal F}$ is not $\k^+$-unbounded.
	This means that there are $\xi_0<\l$ and a family $\mathcal A$ of size $\k^+$ consisting of pairwise disjoint finite subsets of $\k^+$ such that for all different $a$ and $b$ in $\mathcal A$ there are $\a\in a$ and $\b\in b$ such that $\rho_{\mathcal F}(\a,\b)\le \xi_0$.
	Then $\mathcal A\sst G(\mathcal F)$.
	Let us show that $\ps\in\overline{\mathcal A}$.
	Take any basic open $U_{\eta}(\b)$.
	Since $\mathcal A$ is a family of pairwise disjoint finite sets of size $\k^+$, there is some $a\in \mathcal A$ with $\b<a$.
	Then $a\cap F_{\eta}(\b)=\ps$, i.e. $a\in \mathcal A\cap U_{\eta}(\b)$.
	So, since $G(\mathcal F)$ is $\l$-Frech\'et, there is a $\l$-sequence $\seq{a_{\xi}:\xi<\l}$ in $\mathcal A$ such that: \begin{equation}\label{eq:convergence}\mbox{for any open }U_{\zeta}(\g)\mbox{ there is }\eta<\l\mbox{ so that }a_{\xi}\in U_{\zeta}(\g)\mbox{ whenever }\eta\le\xi<\l.\end{equation}
	Take $a\in \mathcal A$ such that $a_{\xi}<a$ for each $\xi<\l$.
	This is possible as $\mathcal A$ is of size $\k^+$ and $\l\le\k$.
	Enumerate $a=\set{\b_0,\dots,\b_{l-1}}$ for some $l<\o$.
	By the assumption on $\mathcal A$, for each $\xi<\l$ there is some $i<l$ and $\a\in a_{\xi}$ such that $\rho_{\mathcal F}(\a,\b_i)\le \xi_0$.
	Hence, there is $j<l$ and a set $J$ cofinal in $\l$, so that for $\xi\in J$ there is $\a\in a_{\xi}$ such that $\rho_{\mathcal F}(\a,\b_j)\le \xi_0$.
	But this implies that $a_{\xi}\notin U_{\xi_0}(\b_j)$ for each $\xi\in J$, contradicting the assumption that $\seq{a_{\xi}:\xi<\l}$ satisfies (\ref{eq:convergence}) for $\zeta=\xi_0$ and $\g=\b_j$.
\end{proof}

We have two immediate corollaries.

\begin{corollary}
	If $\k$ is a regular infinite cardinal and $\mathcal F$ is a strong $(\k,\o)$-matrix indexed by $\k^+$, then $G(\mathcal F)$ is Frech\'et if and only if it is unbounded.
\end{corollary}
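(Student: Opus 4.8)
The plan is to deduce this directly from Theorem \ref{t:frechet} by specializing to $\l=\o$. Since $\k$ is an infinite regular cardinal we have $\o\le\k$, so a strong $(\k,\o)$-matrix indexed by $\k^+$ is exactly an instance of the hypothesis of Theorem \ref{t:frechet} with $\l=\o$; applying that theorem yields that $G(\mathcal F)$ is $\o$-Frech\'et if and only if $\mathcal F$ is unbounded. So the only work is to match $\o$-Frech\'etness with ordinary Frech\'etness.

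The first step I would make explicit is this unwinding of definitions, already noted right after the definition of $\l$-Frech\'etness: an $\o$-sequence $\seq{x_{\xi}:\xi<\o}$ in $A$ witnessing $\o$-Frech\'etness is, by definition, a sequence all but finitely many of whose terms lie in any prescribed neighbourhood of $e$, i.e. a sequence converging to $e$; conversely a sequence in $A$ converging to $e$ is exactly such an $\o$-sequence. Chaining the two equivalences gives that $G(\mathcal F)$ is Frech\'et $\Longleftrightarrow$ $G(\mathcal F)$ is $\o$-Frech\'et $\Longleftrightarrow$ $\mathcal F$ is unbounded, which is the assertion. There is no genuine obstacle here; the only point that requires care is to spell out the (trivial) coincidence of Frech\'etness and $\o$-Frech\'etness, so that the quantifier ``for every open set $U$ containing $e$ there is $\eta<\o$ with $x_{\xi}\in U$ whenever $\eta\le\xi<\o$'' is correctly read as ordinary convergence of the sequence.
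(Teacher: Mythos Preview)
Your proposal is correct and is exactly the paper's approach: the corollary is stated as an immediate consequence of Theorem~\ref{t:frechet} with $\l=\o$, using the remark made just before that theorem that $\o$-Frech\'et coincides with Frech\'et. The paper gives no further proof, so your explicit unwinding of the $\o$-Frech\'et definition is just a fuller version of what the paper leaves implicit.
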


\begin{corollary}
	If $\l\le\k$ are regular infinite cardinals and $\mathcal F$ is an unbounded strong $(\k,\l)$-matrix indexed by $\k^+$, then $G(\mathcal F)$ has tightness at most $\l$.
\end{corollary}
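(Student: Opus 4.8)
The plan is to derive this as an essentially immediate consequence of Theorem \ref{t:frechet} together with the elementary observation that every $\l$-Frech\'et group has tightness at most $\l$. First, since $\mathcal F$ is an \emph{unbounded} strong $(\k,\l)$-matrix indexed by $\k^+$, the hypotheses of Theorem \ref{t:frechet} are met, and that theorem yields that $G(\mathcal F)$ is $\l$-Frech\'et. So the whole task reduces to the purely definitional implication ``$\l$-Frech\'et $\Rightarrow$ tightness $\le\l$''.

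To carry that out, I would take any $A\sst G(\mathcal F)$ with $\ps\in\overline A$ (recall $\ps$ is the identity of $([\k^+]^{<\o},\triangle)$). By $\l$-Frech\'etness there is a $\l$-sequence $\seq{x_{\xi}:\xi<\l}$ with each $x_{\xi}\in A$ such that for every open $U\ni\ps$ there is $\eta<\l$ with $x_{\xi}\in U$ whenever $\eta\le\xi<\l$. Put $X=\set{x_{\xi}:\xi<\l}$. Then $X\sst A$ and $\abs X\le\l$, so it only remains to verify $\ps\in\overline X$: given an open $U\ni\ps$, choose $\eta<\l$ as above; since $\l$ is an infinite regular cardinal and $\eta<\l$, there is $\xi$ with $\eta\le\xi<\l$, and then $x_{\xi}\in U\cap X$, so $U\cap X\neq\ps$. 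Hence $\ps\in\overline X$, which is exactly what is needed to conclude that $G(\mathcal F)$ has tightness at most $\l$.

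I do not expect any genuine obstacle here: all the real work has been done in Theorem \ref{t:frechet}, and the remaining argument is a one-line definitional check. The only point warranting a moment's care is to note that passing from the witnessing $\l$-sequence to its range costs nothing — the range still sits inside $A$, still has size at most $\l$, and (because the sequence is eventually inside every neighborhood of $\ps$) still has $\ps$ in its closure.
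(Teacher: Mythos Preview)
Your proposal is correct and takes essentially the same approach as the paper, which simply records this as an immediate corollary of Theorem \ref{t:frechet}. Your added verification that $\l$-Frech\'et implies tightness at most $\l$ is the obvious definitional check the paper leaves implicit.
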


Note that more properties of the group $G(\mathcal F)$ can be read from the function $\rho_{\mathcal F}$.
Suppose that we are given infinite regular cardinals $\theta$ and $\l$, and a function $f:[\theta]^2\to\l$. We say that the function $f$ satisfies condition (H) if:
\begin{itemize}
	\item[(H)] the set $\set{\a<\b:f(\a,\b)\le\xi}$ is finite for all $\b<\theta$ and $\xi<\l$.
\end{itemize}
Then, by Theorem \ref{t:maxtype} and Remark \ref{r:funcmatr}, we have:

\begin{corollary}\label{c:maxtypeH}
	If $\l\le\k$ are infinite regular cardinals, $\mathcal F$ is a strong $(\k,\l)$-matrix indexed by $\k^+$, and $\rho_{\mathcal F}$ satisfies the condition (H), then $G(\mathcal F)\equiv_T [\k^+]^{<\o}$. 
\end{corollary}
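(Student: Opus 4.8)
The plan is to carry out essentially the argument behind Theorem~\ref{t:maxtype} (i.e.\ Remark~\ref{r:tukey} together with Remark~\ref{r:types}), the only new ingredient being the translation of condition (H) into the statement that $\mathcal F$ is a family of \emph{finite} sets.

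First, apply Remark~\ref{r:funcmatr}: it says $F_{\eta}(\b)=\set{\a<\b:\rho_{\mathcal F}(\a,\b)\le\eta}$ for all $\eta<\l$ and $\b<\k^+$, so the hypothesis that $\rho_{\mathcal F}$ satisfies (H) is exactly the statement that every $F_{\eta}(\b)$ is finite. Hence $\mathcal F\sst[\k^+]^{<\o}$. By Remark~\ref{r:directed}, $(\mathcal F,\sst)$ is a directed set, and by Remark~\ref{r:tukey} we already have $G(\mathcal F)\equiv_T(\mathcal F,\sst)$. So it remains only to show $(\mathcal F,\sst)\equiv_T([\k^+]^{<\o},\sst)$, which I would deduce from Remark~\ref{r:types} applied with the regular cardinal $\k^+$ in place of $\k$ --- once we verify $\abs{\mathcal F}=\k^+$.

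For the cardinality: $\abs{\mathcal F}\le\k^+$ since there are only $\k^+\cdot\l=\k^+$ pairs $(\b,\eta)$ indexing the $F_{\eta}(\b)$. For the reverse inequality, note that by (G1), for each $\g<\k^+$ we have $\g\in\g+1=\bigcup_{\xi<\l}F_{\xi}(\g+1)$, so $\g$ belongs to some member of $\mathcal F$; thus $\bigcup\mathcal F=\k^+$. Since each member of $\mathcal F$ is finite, a subfamily of size at most $\k$ covers at most $\k$ ordinals, so $\abs{\mathcal F}\ge\k^+$. (Alternatively, $\abs{\mathcal F}\ge\k^+$ is already forced by Theorem~\ref{t:character} via the isomorphism $(\cu(\mathcal F),\supseteq)\cong(\mathcal F,\sst)$ of Remark~\ref{r:tukey}.) Now Remark~\ref{r:types} gives $([\k^+]^{<\o},\sst)\equiv_T(\mathcal F,\sst)$, and combining this with Remark~\ref{r:tukey} yields $G(\mathcal F)\equiv_T[\k^+]^{<\o}$.

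There is no serious obstacle here: the proof is a short assembly of facts already established in the excerpt. The one point that deserves a careful sentence is the count $\abs{\mathcal F}=\k^+$, because Remark~\ref{r:types} is stated for families of size exactly $\k$ (here $\k^+$); the lower bound rests on the elementary observation that $\k^+$ cannot be covered by $\k$ finite sets. It may also be worth remarking that condition (H) in fact forces $\l=\k$ --- since $\k=\bigcup_{\xi<\l}F_{\xi}(\k)$ is then a union of $\l$ finite sets, giving $\k\le\l\le\k$ --- so this corollary is really about strong $\k$-matrices and is the exact analogue of Theorem~\ref{t:maxtype} for $\k=\o$; this observation is not needed in the proof.
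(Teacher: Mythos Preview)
Your proof is correct and follows essentially the same route as the paper, which simply cites Theorem~\ref{t:maxtype} and Remark~\ref{r:funcmatr}: you unpack that reference into its constituents (Remarks~\ref{r:tukey}, \ref{r:directed}, and \ref{r:types}) and add the explicit verification that $\abs{\mathcal F}=\k^+$, which the paper leaves implicit. Your side observation that condition~(H) forces $\l=\k$ is a nice bonus and indeed not needed for the argument.
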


\section{Transitive and subadditive functions}\label{s:subadditive}

Let $\theta,\l$ be regular infinite cardinals.
We say that a function $f:[\theta]^2\to \l$ is \emph{subadditive} if for all $\a<\b<\g<\theta$:
\begin{itemize}
	\item[(S1)] $f(\a,\g)\le\max\set{f(\a,\b),f(\b,\g)}$,
	\item[(S2)] $f(\a,\b)\le\max\set{f(\a,\g),f(\b,\g)}$.
\end{itemize}
If $f$ satisfies only condition (S1) we say it is \emph{transitive}.

We will now explain how to obtain a topological group from any transitive function.
So let $\l\le\k$ be regular infinite cardinals, and suppose we are given a transitive function $r:[\k^+]^2\to\l$.
For any $\xi<\l$ and $\b<\k^+$ let us define $F^r_{\xi}(\b)=\set{\a<\b:f(\a,\b)\le\xi}$ and denote $\mathcal F_r=\set{F^r_{\xi}(\b):\xi<\l,\b<\k^+}$.
Note that in this case $r=\rho_{\mathcal F_r}$.

\begin{theorem}\label{t:transitivegroup}
	The family $\mathcal F_r$ is a $(\k,\l)$-matrix.
\end{theorem}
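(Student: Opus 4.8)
The plan is to verify the three defining conditions (G1)--(G3) of an $(A,B)$-matrix directly from the definition of $F^r_{\xi}(\b)=\set{\a<\b:r(\a,\b)\le\xi}$, taking $A=C=\k^+$ (which is trivially cofinal in $\k^+$) and $B=\l$; since $\l\le\k$ are regular infinite cardinals, this yields that $\mathcal F_r$ is a $(\k,\l)$-matrix indexed by $\k^+$. Conditions (G1) and (G2) are immediate and I would dispatch them first. For (G1), fix $\a<\k^+$: each $F^r_{\xi}(\a)$ is contained in $\a$ by definition, while conversely any $\g<\a$ lies in $F^r_{r(\g,\a)}(\a)$, so $\bigcup_{\xi<\l}F^r_{\xi}(\a)=\a$. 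For (G2), if $\xi\le\eta$ in $\l$ and $\g\in F^r_{\xi}(\a)$, then $r(\g,\a)\le\xi\le\eta$, hence $\g\in F^r_{\eta}(\a)$.

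The only condition that uses a hypothesis on $r$ is (G3), and it is exactly where transitivity (S1) enters. Fix $\a\le\b$ in $\k^+$ and $\xi<\l$. If $\a=\b$ we take $\eta=\xi$. Otherwise $\a<\b$, and I would set $\eta=\max\set{\xi,r(\a,\b)}$; this is an ordinal below $\l$ because $\l$ is a regular infinite cardinal. For any $\g\in F^r_{\xi}(\a)$ we have $\g<\a<\b$ and $r(\g,\a)\le\xi$, so (S1) gives
$$r(\g,\b)\le\max\set{r(\g,\a),r(\a,\b)}\le\max\set{\xi,r(\a,\b)}=\eta,$$
and therefore $\g\in F^r_{\eta}(\b)$. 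Hence $F^r_{\xi}(\a)\sst F^r_{\eta}(\b)$, which is (G3). This completes the verification.

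I do not anticipate any genuine obstacle: (G1) and (G2) merely unwind the definition, and (G3) is a one-line application of the transitivity inequality with the explicit witness $\eta=\max\set{\xi,r(\a,\b)}$. The one point worth a moment's care is that $\eta<\l$, which holds precisely because $\l$ is an infinite regular cardinal and hence closed under the maximum of two of its elements; this is the only place the cardinal hypothesis on $\l$ is used. It is also worth remarking that condition (S2) plays no role in this argument — it is needed only later, to promote $\mathcal F_r$ to a \emph{strong} matrix when $r$ is in fact subadditive.
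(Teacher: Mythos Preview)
Your proof is correct and follows essentially the same approach as the paper: verify (G1) and (G2) directly from the definition, and for (G3) take $\eta=\max\set{\xi,r(\a,\b)}$ and apply (S1). Your treatment is in fact slightly more careful, since you separate out the trivial case $\a=\b$ (where $r(\a,\b)$ is undefined) before invoking transitivity.
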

\begin{proof}
	We have to prove that $\mathcal F_r$ satisfies conditions (G1)-(G4).
	
	For (G1), take $\a<\k^+$.
	First observe that $F^r_{\xi}(\a) \sst \a$ for each $\xi<\l$.
	For the other inclusion, take any $\g < \a$.
	Let $\eta=r(\g,\a)$.
	Then $\g \in F^r_{\eta}(\a)$, so we proved $\a=\bigcup_{\xi<\l}F^r_{\xi}(\a)$.
	
	For the proof of (G2), take any $\a < \k^+$ and any $\xi \leq \eta < \l$.
	If $\g \in F^r_{\xi}(\a)$, then $r(\g,\a) \leq \xi \leq \eta$.
	Thus, $\g \in F^r_{\eta}(\a)$.
	
	Next, we show (G3).
	Fix $\xi < \l$ and $\a \leq \b < \k^+$.
	We need to find $\eta < \l$ such that $F^r_{\xi}(\a) \sst F^r_{\eta}(\b)$.	
	Let $\eta = \max\{\xi,r(\a,\b)\}$.
	We will show this $\eta$ works.
	Let $\g \in F^r_{\xi}(\a)$.
	Then $r(\g,\a) \leq \xi$.
	Now, by (S1),
	$$
	r(\g,\b) \leq \max\{r(\g,\a),r(\a,\b)\} \leq \max\{\xi,r(\a,\b)\} = \eta,
	$$
	and therefore $\g \in F^r_{\eta}(\a)$.
\end{proof}

\begin{theorem}\label{t:subadditivegroup}
	If $r$ is moreover subadditive, then $\mathcal F_r$ is a strong $(\k,\l)$-matrix.
\end{theorem}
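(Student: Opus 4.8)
The plan is to bootstrap from Theorem \ref{t:transitivegroup}. A subadditive function satisfies (S1) in particular, hence it is transitive, so that theorem already tells us $\mathcal F_r$ is a $(\k,\l)$-matrix; that is, conditions (G1)--(G3) are in place. Consequently the only thing left to verify is the extra clause (G4): for all $\a\le\b<\k^+$ and every $\eta<\l$, there is $\xi<\l$ with $F^r_{\eta}(\b)\cap\a\sst F^r_{\xi}(\a)$. (Recall that since $r$ is defined on $[\k^+]^2$, the matrix $\mathcal F_r$ is indexed by $C=\k^+$.)

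First I would dispose of the degenerate case $\a=\b$, where $\xi=\eta$ works trivially. So assume $\a<\b$, fix $\eta<\l$, and set $\xi=\max\set{\eta,r(\a,\b)}$; this is an ordinal below $\l$ because both $\eta$ and $r(\a,\b)$ are. Now take any $\g\in F^r_{\eta}(\b)\cap\a$. Then $\g<\a<\b$, so $r(\g,\a)$ is defined, and $r(\g,\b)\le\eta$ by the definition of $F^r_{\eta}(\b)$. Applying (S2) to the triple $\g<\a<\b$ gives
$$r(\g,\a)\le\max\set{r(\g,\b),r(\a,\b)}\le\max\set{\eta,r(\a,\b)}=\xi,$$
so $\g\in F^r_{\xi}(\a)$. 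This is exactly (G4), and together with Theorem \ref{t:transitivegroup} it shows $\mathcal F_r$ is a strong $(\k,\l)$-matrix.

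I do not anticipate any real obstacle here; the argument is a two-line application of (S2). The only conceptual point worth flagging is that transitivity alone (condition (S1)) was enough to obtain (G1)--(G3), whereas it is precisely the second subadditivity inequality (S2) that lets one push membership in a section $F^r_{\eta}(\b)$ down to membership in some section $F^r_{\xi}(\a)$ at a smaller index $\a$ — which is the content of (G4). The remaining items are pure bookkeeping: the case $\a=\b$, and the observation that $\g\in\a$ means $\g<\a$ so the value $r(\g,\a)$ is legitimately defined.
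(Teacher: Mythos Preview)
Your proof is correct and follows essentially the same route as the paper: invoke Theorem~\ref{t:transitivegroup} for (G1)--(G3), then verify (G4) by setting $\xi=\max\{\eta,r(\a,\b)\}$ and applying (S2) to the triple $\g<\a<\b$. Your explicit treatment of the degenerate case $\a=\b$ (where $r(\a,\b)$ is undefined) is a small piece of extra care the paper glosses over.
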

\begin{proof}
	By the previous theorem we still have to show (G4).
	To see that (G4) is true for $\mathcal F_r$, fix $\eta < \l$ and $\a \leq \b < \k^+$.
	We need to find $\xi < \l$ such that $F^r_{\eta}(\b) \cap \a \sst F^r_{\xi}(\a)$.
	Let $\xi = \max\{\eta,r(\a,\b)\}$.
	We will show that this $\xi$ works.
	Let $\g \in F^r_{\eta}(\b) \cap \a$.
	Then $r(\g,\b) \leq \eta$ and $\g < \a$. Now, by (S2),
	$$
	r(\g,\a) \leq \max\{r(\g,\b),r(\a,\b)\} \leq \max\{\eta, r(\a,\b)\} = \xi.
	$$
	Thus $\g \in F^r_{\xi}(\a)$, as required.
\end{proof}

By Theorem \ref{t:tgroup}, Theorem \ref{t:character}, Theorem \ref{t:smalltype}, and Theorem \ref{t:smallcharacter}, we have the following. 

\begin{corollary}\label{c:subadditive}
	The group $([\k^+]^{<\o}, \triangle)$ with the topology generated by $\cu(\mathcal F_r)$ becomes a topological group of character $\k^+$ whose only Tukey quotients of size less than $\k^+$ are $1$ and $\l$.
	Moreover, if $r$ is subadditive, then for $\d < \k^+$, the topology of the subgroup $[\d]^{<\o}$ is generated by the family $\cu_\d(\mathcal F_r)$.
\end{corollary}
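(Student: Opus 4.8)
The plan is simply to assemble the results already proved, applied to the concrete matrix $\mathcal F_r$. Since $r\colon[\k^+]^2\to\l$ is transitive and $\l\le\k$ are regular infinite cardinals, Theorem \ref{t:transitivegroup} tells us that $\mathcal F_r=\set{F^r_{\xi}(\b):\xi<\l,\ \b<\k^+}$ is a $(\k,\l)$-matrix; here the indexing set $C$ is all of $\k^+$, so $\mathcal F_r$ is in fact indexed by $\k^+$. Theorem \ref{t:tgroup} then applies verbatim: $\cu(\mathcal F_r)$ is a neighbourhood basis of the identity $\ps$ for a group topology on $([\k^+]^{<\o},\triangle)$, i.e.\ this is the topological group $G(\mathcal F_r)$ in the notation fixed after Theorem \ref{t:tgroup}.

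Next I would read off the two remaining assertions of the first sentence. Theorem \ref{t:character}, applied to $\mathcal F_r$, gives that $G(\mathcal F_r)$ has character $\k^+$. Theorem \ref{t:smalltype}, again applied to $\mathcal F_r$, says precisely that $1$ and $\l$ are the only Tukey quotients of $G(\mathcal F_r)$ of cardinality less than $\k^+$; if one wishes, one may instead pass through Remark \ref{r:tukey} and argue with the directed set $(\mathcal F_r,\sst)$, but the cited theorem already packages this.

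For the "moreover" clause, assume in addition that $r$ is subadditive. Then Theorem \ref{t:subadditivegroup} upgrades the previous conclusion: $\mathcal F_r$ is a \emph{strong} $(\k,\l)$-matrix. Hence Theorem \ref{t:smallcharacter} applies, and for every $\d<\k^+$ the family $\cu_{\d}(\mathcal F_r)$ is a local base of $\ps$ in the subgroup $([\d]^{<\o},\triangle)$ with the topology induced from $G(\mathcal F_r)$; equivalently, this subgroup topology is generated by $\cu_{\d}(\mathcal F_r)$. I do not expect a genuine obstacle: the entire content is bookkeeping, namely verifying that each cited theorem's hypotheses are met by $\mathcal F_r$ (the identity $r=\rho_{\mathcal F_r}$ and the indexing by $\k^+$ having already been observed in the text). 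The only point to watch is that the parameter $\k$ of the matrix $\mathcal F_r$ coincides with the $\k$ occurring in "character $\k^+$" and in "Tukey quotients of size less than $\k^+$", which is immediate from the way $\mathcal F_r$ was defined.
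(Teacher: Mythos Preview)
Your proposal is correct and takes essentially the same approach as the paper, which simply cites Theorems \ref{t:tgroup}, \ref{t:character}, \ref{t:smalltype}, and \ref{t:smallcharacter} (together with Theorems \ref{t:transitivegroup} and \ref{t:subadditivegroup} to verify that $\mathcal F_r$ is a (strong) $(\k,\l)$-matrix). Your observation that $C=\k^+$ here, so that every $\d<\k^+$ lies in $C$ as required by Theorem \ref{t:smallcharacter}, is exactly the bookkeeping needed.
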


We denote this group by $G_{r}(\k^+)=G(\mathcal F_r)$ (the former notation appeared in the earlier literature and the latter one is from this paper).

By Theorem \ref{t:frechet} we have another corollary:

\begin{corollary}
	If $r$ is subadditive, then it is $\k^+$-unbounded iff $G_r(\k^+)$ is $\l$-Frech\'et.
\end{corollary}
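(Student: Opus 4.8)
The plan is to reduce the statement entirely to Theorem \ref{t:frechet}, which has already done the substantive work. First I would check that the hypotheses of that theorem apply to $\mathcal F_r$: since $r$ is subadditive, Theorem \ref{t:subadditivegroup} tells us that $\mathcal F_r$ is a \emph{strong} $(\k,\l)$-matrix, and by the very definition of $\mathcal F_r$ the sets $F^r_{\xi}(\b)$ are defined for all $\b<\k^+$, so $\mathcal F_r$ is indexed by $\k^+$. Hence $\mathcal F = \mathcal F_r$ meets the standing assumptions of Theorem \ref{t:frechet}.

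Next I would unwind the two notions of unboundedness so that they line up. Recall the observation made right after the definition of $\mathcal F_r$, namely $r = \rho_{\mathcal F_r}$. By definition, a $(\k,\l)$-matrix $\mathcal F$ is \emph{unbounded} exactly when $\rho_{\mathcal F}$ is a $\k^+$-unbounded function; applied to $\mathcal F_r$, this says that $\mathcal F_r$ is unbounded if and only if $\rho_{\mathcal F_r}=r$ is $\k^+$-unbounded. This is a pure bookkeeping step with no content beyond chasing the definitions.

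Finally I would invoke Theorem \ref{t:frechet} with $\mathcal F=\mathcal F_r$: it gives that $G(\mathcal F_r)$ is $\l$-Frech\'et if and only if $\mathcal F_r$ is unbounded. Combining this with the previous step, and using $G_r(\k^+)=G(\mathcal F_r)$, we conclude that $G_r(\k^+)$ is $\l$-Frech\'et if and only if $r$ is $\k^+$-unbounded, as claimed.

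I do not expect any genuine obstacle in this corollary; it is a direct specialization. The only points requiring a moment's care are purely definitional: verifying that $\mathcal F_r$ is indeed indexed by all of $\k^+$ (so that $\rho_{\mathcal F_r}$ is defined on every pair and the phrase ``indexed by $\k^+$'' in Theorem \ref{t:frechet} is literally satisfied), and that ``$\mathcal F_r$ is unbounded'' correctly unwinds, via $r=\rho_{\mathcal F_r}$, to ``$r$ is $\k^+$-unbounded''.
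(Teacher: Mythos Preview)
Your proposal is correct and matches the paper's approach exactly: the paper simply writes ``By Theorem \ref{t:frechet} we have another corollary'' and states the result without further proof. You have spelled out precisely the definitional unpacking (that $\mathcal F_r$ is a strong $(\k,\l)$-matrix indexed by $\k^+$ via Theorem \ref{t:subadditivegroup}, and that $r=\rho_{\mathcal F_r}$ so the two notions of unboundedness coincide) that makes this invocation legitimate.
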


Note that, by Theorem \ref{t:maxtype}, we also have:

\begin{corollary}
	If $r$ satisfies condition (H), then $G_r(\k^+)\equiv_T [\k^+]^{<\o}$.
\end{corollary}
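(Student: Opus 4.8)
The plan is to reduce the statement to the two Tukey observations about families of finite sets, exactly as in the proof of Theorem~\ref{t:maxtype}. Recall that in this part of the section $r\colon[\k^+]^2\to\l$ is a \emph{transitive} function, so by Theorem~\ref{t:transitivegroup} the family $\mathcal F_r$ is a $(\k,\l)$-matrix and the group $G_r(\k^+)=G(\mathcal F_r)$ is defined. Also $r=\rho_{\mathcal F_r}$ by construction, so condition (H) for $r$ says precisely that each set $F^r_\xi(\b)=\set{\a<\b:r(\a,\b)\le\xi}$ is finite. First I would record the two consequences of this: (i) every member of $\mathcal F_r$ is a finite subset of $\k^+$, i.e. $\mathcal F_r\sst[\k^+]^{<\o}$; and (ii) by Remark~\ref{r:directed}, $(\mathcal F_r,\sst)$ is a directed set.

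Next I would compute $\abs{\mathcal F_r}=\k^+$. The inequality $\le$ is clear since $\mathcal F_r$ is indexed by $\l\times\k^+$ and $\l\le\k$. For $\ge$, note that by (G1) every $\a<\k^+$ lies in $F^r_\xi(\a+1)$ for some $\xi<\l$, so $\bigcup\mathcal F_r=\k^+$; since all members of $\mathcal F_r$ are finite, a subfamily of size $\le\k$ would have union of size $\le\k$, and hence $\abs{\mathcal F_r}$ must be $\k^+$.

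With this in hand, I would invoke Remark~\ref{r:types} with its cardinal parameter taken to be $\k^+$ (a regular infinite cardinal): since $\mathcal F_r$ is a subset of $[\k^+]^{<\o}$ of cardinality $\k^+$ that is directed by $\sst$, we obtain $([\k^+]^{<\o},\sst)\equiv_T(\mathcal F_r,\sst)$. Finally, Remark~\ref{r:tukey} identifies $(\cu(\mathcal F_r),\supseteq)$ with $(\mathcal F_r,\sst)$ and hence gives $G_r(\k^+)\equiv_T(\mathcal F_r,\sst)$. Composing the two equivalences yields $G_r(\k^+)\equiv_T[\k^+]^{<\o}$.

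There is essentially no obstacle here: the statement is a bookkeeping consequence of earlier results, and indeed the one direction $[\k^+]^{<\o}\le_T G_r(\k^+)$ is Theorem~\ref{t:maxtype} applied to the matrix $\mathcal F_r\sst[\k^+]^{<\o}$, the reverse direction coming for free from Remark~\ref{r:types}. The only points requiring a moment of attention are reading off (H) correctly through the identity $r=\rho_{\mathcal F_r}$ (so that $\mathcal F_r\sst[\k^+]^{<\o}$), and the cardinality count $\abs{\mathcal F_r}=\k^+$ needed before Remark~\ref{r:types} can be applied.
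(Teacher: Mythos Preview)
Your proposal is correct and takes essentially the same approach as the paper: the paper simply cites Theorem~\ref{t:maxtype}, whose proof is precisely Remark~\ref{r:tukey} together with Remark~\ref{r:types}, and you have unpacked exactly these two remarks while supplying the cardinality check $\abs{\mathcal F_r}=\k^+$ that the paper leaves implicit. If anything, your version is slightly more careful, since Theorem~\ref{t:maxtype} as stated assumes a $(\k,\o)$-matrix whereas here $\l$ is arbitrary; your direct appeal to the two remarks sidesteps that minor mismatch.
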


By Corollary~\ref{c:nonmetrizable} and Corollary~\ref{l:alpha_1}, the following holds:

\begin{corollary}
	For any subadditive $r:[\o_1]^2\to\o$, the group $G_r(\o_1)$ is a non-metrizable, $(\a_1)$-group.
\end{corollary}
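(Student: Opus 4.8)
The plan is to identify the parameters and invoke the already-established corollaries. Since $r$ maps $[\o_1]^2$ into $\o$, we are in the case $\k=\l=\o$, so $\k^+=\o_1$ and $\mathcal F_r=\set{F^r_\xi(\b):\xi<\o,\ \b<\o_1}$ is a $(C,\o)$-matrix with $C=\o_1$, which is (trivially) cofinal in $\o_1$; hence $\mathcal F_r$ is an $\o$-matrix. Because $r$ is moreover subadditive, Theorem \ref{t:subadditivegroup} upgrades this to: $\mathcal F_r$ is a \emph{strong} $\o$-matrix. Recall that $G_r(\o_1)=G(\mathcal F_r)$ is the topological group produced by Theorem \ref{t:tgroup}.

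Non-metrizability is then immediate: Corollary \ref{c:nonmetrizable} applies to any $\o$-matrix and gives that $G(\mathcal F_r)$ is not metrizable (equivalently, by Theorem \ref{t:character} its character is $\o_1>\o$, so the Birkhoff--Kakutani theorem forbids metrizability).

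For the $(\a_1)$ property I would appeal to Corollary \ref{c:alpha_1}, whose hypothesis --- that $\mathcal F_r$ be a strong $\o$-matrix --- has just been verified. Unpacking it: for each $\d<\o_1$ the subgroup $([\d]^{<\o},\triangle)$ carries the induced topology whose local base at $\ps$ is the \emph{countable} family $\cu_\d(\mathcal F_r)=\set{[\d]^{<\o}\cap U_\eta(\d):\eta<\o}$ by Theorem \ref{t:smallcharacter}, so $([\d]^{<\o},\triangle)$ is metrizable. Since $[\o_1]^{<\o}=\bigcup_{\d<\o_1}[\d]^{<\o}$ exhibits $G_r(\o_1)$ as an increasing union of metrizable subgroups and $\abs{G_r(\o_1)}=\abs{[\o_1]^{<\o}}=\aleph_1$, Lemma \ref{l:alpha_1} yields that $G_r(\o_1)$ is an $(\a_1)$-group.

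The only points needing any care --- none of them genuinely difficult --- are: confirming that $\mathcal F_r$ is indexed by a cofinal subset of $\o_1$ (it is indexed by $\o_1$ itself), so that the terminology ``strong $\o$-matrix'' applies verbatim; noting that each $[\d]^{<\o}$ is a subgroup of $([\o_1]^{<\o},\triangle)$; and the cardinality bookkeeping so that Lemma \ref{l:alpha_1} is applicable. I expect no real obstacle beyond assembling these pieces.
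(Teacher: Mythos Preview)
Your proposal is correct and follows exactly the paper's approach: the paper's justification is simply ``By Corollary~\ref{c:nonmetrizable} and Corollary~\ref{c:alpha_1}'', and you have faithfully unpacked those references (together with the input from Theorem~\ref{t:subadditivegroup} that $\mathcal F_r$ is a strong $\o$-matrix). There is nothing to add.
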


There are various subadditive functions.
For the first example, consider the $C$-sequence $\seq{C_{\a}:\a<\o_1}$, and define the function $\rho:[\o_1]^2\to \o$ by $$\rho(\a,\b)=\max\set{\abs{C_{\b}\cap\a},\rho(\a,\min(C_{\b}\setminus\a)),\rho(\xi,\a):\xi\in C_{\b}\cap\a},$$ with the boundary condition $\rho(\a,\a)=0$.
This function has been introduced in \cite[Section 3]{walks}.
There it has been proved that $\rho$ satisfies (S1) and (S2), and that it even satisfies additional condition (H).
Another function introduced in \cite{walks} is $\rho_1:[\o_1]^2\to\rho$ defined by $\rho_1(\a,\b)=\max\set{\abs{C_{\b}\cap\a},\rho_1(\a,\min(C_{\b}\setminus\a))}$, with the boundary condition $\rho_1(\a,\a)=0$.
From the definition is clear that \begin{equation}\label{eq:1}\rho(\a,\b)\ge\rho_1(\a,\b)\mbox{ for }\a\le\b<\o_1.\end{equation}
Hence we know that $G_{\rho}(\o_1)$ has the following properties:
\begin{enumerate}
	\item $G_{\rho}(\o_1)$ is an $(\alpha_1)$-group by Corollary \ref{c:subadditive},
	\item $G_{\rho}(\o_1)$ is Frech\'et by Lemma 2.2.6 in \cite{walks} and Theorem \ref{t:frechet}, using (\ref{eq:1}),
	\item $G_{\rho}(\o_1)\equiv_T [\o_1]^{<\o}$ by Corollary \ref{c:maxtypeH}.
\end{enumerate}

Recall that the function $\bar\rho:[\o_1]^2\to\o$ introduced also in \cite[Section 3]{walks}, has the same properties, so the same holds for $G_{\bar\rho}(\o_1)$.
For the completeness of the paper, we give its definition: $\bar\rho(\a,\b)=2^{\rho(\a,\b)}(2\abs{\set{\xi\le\a:\rho(\xi,\a)\le\rho(\a,\b)}}+1)$.
The key feature of $\bar\rho$, distinguishing it from $\rho$, is that $\bar\rho(\a,\g)\neq\bar\rho(\b,\g)$ holds for all $\a<\b<\g<\o_1$.

Let us provide another example of transitive function, which then gives the corresponding group.
For the remainder of this section fix a regular infinite cardinal $\k$.
Let $\mathcal{TO}=\set{a_{\a}: \a < \kappa^+}$ be a $\k^+$-tower in $\mathcal{P}(\k)$, i.e. a family of sets $a_{\a} \sst \k$ ($\a<\k^+$) such that, for $\a,\b<\k^+$:

\begin{center}
	$\abs{a_{\a}\setminus a_{\b}}<\k$ if and only if $\a \le \b$.
\end{center}
Let us define a function $\rho_{\mathcal{TO}}:[\k^+]^2\to \k$ by $\rho_{\mathcal{TO}}(\a,\b)=\min\set{\xi<\k:a_{\a}\setminus \xi\sst a_{\b}}$.
Since $\mathcal{TO}$ is a $\k^+$-tower, the function $\rho_{\mathcal{TO}}$ is well defined.
As it is commented in \cite[pp. 64]{walks}, the function $\rho_{\mathcal{TO}}$ is transitive, so by Theorem \ref{t:tgroup}, Theorem \ref{t:character}, Theorem \ref{t:smalltype}, and Theorem \ref{t:transitivegroup}, we know that the topological group $G_{\rho_{\mathcal{TO}}}(\k^+)$ has character $\k^+$, and its only Tukey quotients of cardinality less then $\k^+$ are $1$ and $\k$.
Recall that a tower $\mathcal{TO}=\set{a_{\a}:\a<\o_1}$ in $\mathcal P(\o)$ is \emph{Hausdorff} if for every $n<\o$ and $\b<\o_1$, the set $F^{\mathcal{TO}}_n(\b)=\set{\a<\b:a_{\a}\setminus a_{\b}\sst n}$ is finite.
So, by Theorem \ref{t:maxtype} and Remark \ref{r:funcmatr}, if $\mathcal{TO}$ is a Hausdorff tower in $\mathcal{P}(\o)$, then $[\o_1]^{<\o}\equiv_T G(\mathcal F_{\mathcal TO})$.

\section{Trees}\label{s:tree}

In this section we will give two constructions of topological groups from trees.
The key difference is that the first construction is for trees of arbitrary height, while the other one is for trees of height $\o_1$.
The first one is more general, however we are able to obtain finer analysis of cofinal types of corresponding groups using the second construction.

Recall that a strict partial order $(T,\prec)$ is \emph{a tree} if it has a minimum called the root of $T$, and for each node $t\in T$ the set of its predecessors $t_{\downarrow}=\set{s\in T: s\prec t}$ is well-ordered.
For every $t\in T$, we define its height as $\h(t)=\otp(t_{\downarrow},\prec)$.
For an ordinal $\a$, the $\a$th level of $T$ is $T_{\a}=\set{t\in T:\h(t)=\a}$.
The height of the tree $T$ is $\h(T)=\min\set{\a:T_{\a}=\ps}$.
For a set of ordinals $\Gamma$ and any subset $X$ of $T$, we set $X\rest\Gamma=\bigcup_{\g\in \Gamma}X\cap T_{\g}$.
We say that $B\sst T$ is a \emph{branch} in $T$, if $B$ is linearly ordered and $T_{\a}\cap B\neq\ps$ for each $\a<\h(T)$.
If $\k$ is an infinite cardinal, than $T$ is a \emph{$\k$-tree} if it is of height $\k$ and each level of $T$ has cardinality less then $\k$, i.e. $\h(T)=\k$ and $\abs{T_{\a}}<\k$ for all $\a<\k$.
Every tree $T$ we consider in this paper will be \emph{normal}, i.e whenever $s\in T$ and $\h(s)<\a<\h(T)$ there is $t\in T_{\a}$ such that $s\prec t$.
An $\o_1$-tree $T$ is \emph{Aronszajn} if it has no brach, i.e. every linearly ordered subset is countable.
The following is a well-known result about Aronszajn trees which will be used.
Implicitly, it was first noticed by Jensen in \cite{devlin}, and the proof is a modification of the proof of Lemma 5.9 in \cite{handbook}.
We provide the proof for completeness of the paper.

\begin{lemma}\label{l:handbook}
	Let $T$ be an Aronszajn tree.
	Then for each uncountable pairwise disjoint family $X$ of finite subsets of $T$, the set $$\Gamma=\set{\g<\o_1:(\forall K\in [T_{\g}]^{<\o})\ \abs{\set{x\in X: (\forall t\in x)\ t_{\downarrow}\cap K=\ps}}=\aleph_1}$$ is uncountable. 
\end{lemma}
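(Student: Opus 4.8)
The plan is to assume toward a contradiction that $\Gamma$ is countable, fix some $\delta<\o_1$ with $\delta>\sup\Gamma$, and derive a branch in $T$. For every $\gamma\in[\delta,\o_1)\setminus\Gamma$ — so for a final segment of $\gamma$'s — there is, by failure of the defining property, a finite set $K_\gamma\in[T_\gamma]^{<\o}$ such that $$\abs{\set{x\in X:(\forall t\in x)\ t_{\downarrow}\cap K_\gamma=\ps}}\le\aleph_0,$$ i.e. all but countably many $x\in X$ contain some node lying above a member of $K_\gamma$. The idea is that, since $X$ is uncountable and each $K_\gamma$ "captures" a co-countable subfamily of $X$, the union $\bigcup_{\gamma}(K_\gamma)_{\uparrow}$ (the upward closure) covers a co-countable part of the union of members of $X$; passing to the contrapositive, there should be a single node through which cofinally many of the $K_\gamma$ are linked, and that node's predecessor chain, suitably extended, will be the branch.

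More carefully, first I would thin out: discard the countably many $x\in X$ that are exceptions for $K_\delta$ (there are only $\aleph_0$ of them), leaving an uncountable $X'\sst X$ every member of which meets $(K_\delta)_{\uparrow}$. Since each $x\in X'$ is finite and $K_\delta$ is finite, by a pigeonhole/Fodor-type argument there is a single $t_0\in K_\delta$ and an uncountable $X''\sst X'$ such that every $x\in X''$ contains a node above $t_0$ (or equal to $t_0$). Now iterate: the sets $T_\gamma\cap(t_0)_{\uparrow}$ for $\gamma>\h(t_0)$, together with the constraint coming from $K_\gamma$, should force the successive choice of nodes $t_0\prec t_1\prec t_2\prec\cdots$ with $\h(t_\xi)$ cofinal in $\o_1$, each time keeping an uncountable subfamily of $X$ whose members pass above $t_\xi$. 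Taking $\bigcup_\xi (t_\xi)_{\downarrow}$ yields an uncountable chain, contradicting that $T$ is Aronszajn.

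The main obstacle, and the step demanding the most care, is the transfinite recursion that produces the chain: one must ensure that at a limit stage $\xi$ of the construction the "uncountable surviving subfamily" is still nonempty and still feeds into the next $K_\gamma$-constraint, and that the heights $\h(t_\xi)$ are genuinely unbounded in $\o_1$ rather than stabilizing. This is exactly where normality of $T$ and the fact that each level is countable are used: countability of levels makes the pigeonhole at each successor step work (only countably many candidate nodes on a given level), and it also guarantees that if cofinally many members of an uncountable family pass above the \emph{same} level $T_\gamma$, then by pigeonhole they pass above a \emph{common} node there, which is what lets the recursion continue. I would organize the recursion so that the invariant maintained is: "an uncountable $Y_\xi\sst X$ and a node $t_\xi$ with $t_\eta\prec t_\xi$ for $\eta<\xi$, such that every $x\in Y_\xi$ has a member above $t_\xi$", and check the invariant propagates through successors (using the $K_\gamma$ at the next relevant level and pigeonhole on that finite set, then pigeonhole again on the countable level above $t_\xi$) and through limits (intersecting the nested $Y_\eta$ would only give countability, so instead one picks $t_\xi$ on a level above $\sup_{\eta<\xi}\h(t_\eta)$ and re-thins using the corresponding $K_\gamma$, which is legitimate since that $\gamma$ lies outside the countable set $\Gamma$). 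Once the chain of length $\o_1$ is built, the contradiction with Aronszajn-ness is immediate.
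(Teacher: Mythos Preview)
Your limit step does not work, and this is a genuine gap rather than a detail to be filled in. At a limit stage $\xi$ you propose to pick a level $\g>\sup_{\eta<\xi}\h(t_\eta)$ and ``re-thin using the corresponding $K_\g$''. But re-thinning from $X$ produces some $t_\xi\in K_\g$ and an uncountable $Y_\xi\sst X$ with every $x\in Y_\xi$ meeting $(t_\xi)_{\uparrow}$ --- and nothing whatsoever forces $t_\xi$ to lie above the chain $\{t_\eta:\eta<\xi\}$ you have already built. If instead you try to re-thin from some $Y_{\eta_0}$, you can find $s\in K_\g$ and uncountably many $x\in Y_{\eta_0}$ with a member above $s$; but that member need not be the same member of $x$ that lies above $t_{\eta_0}$, so again you cannot conclude $t_{\eta_0}\prec s$. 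Tracking a fixed witness $u(x)\in x$ through the recursion does not help either: the sets $Y_\eta$ genuinely shrink by uncountable amounts at each pigeonhole step, so $\bigcap_{\eta<\xi}Y_\eta$ may well be empty. In short, the recursion cannot be continued past $\o$ while maintaining both the chain condition and an uncountable surviving family.

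The paper avoids the recursion entirely. After stabilising $\abs{K_\a}=m$ on an uncountable $\Sigma$ and fixing $\abs{x}=n$, it takes a \emph{uniform ultrafilter} $\cu$ on $\Sigma$, enumerates $X=\{x_\a:\a\in\Sigma\}$, and for each $\d\in\Sigma$ uses the co-countability of $\{\a:x_\a\notin X_\d\}$ to find a pair $(i_\d,j_\d)\in m\times n$ with $\{\a\in\Sigma:K_\d(i_\d)\prec x_\a(j_\d)\}\in\cu$. Stabilising $(i_\d,j_\d)=(i,j)$ on an uncountable $\Delta\sst\Sigma$, any two $\b<\d$ in $\Delta$ have a common $\a$ in the intersection of their $\cu$-large sets, whence $K_\b(i)\prec x_\a(j)$ and $K_\d(i)\prec x_\a(j)$, so $K_\b(i)\prec K_\d(i)$. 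Thus $\{K_\d(i):\d\in\Delta\}$ is an uncountable chain. The ultrafilter is exactly the device that makes the local choices cohere globally --- the coherence your recursion is unable to manufacture at limits.
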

\begin{proof}
	We can assume that $X\sst [T]^n$ for some $n<\o$.
	Suppose the contrary, that $\Gamma$ is countable.
	Then for each $\a>\sup(\Gamma)$ there is a finite set $K_{\a}\sst T_{\a}$ so that $X_{\a}=\set{x\in X:(\forall t\in x)(\forall s\in K_{\a})\ s\not\prec t}$ is countable.
	This means that there is an uncountable set $\Sigma\sst \o_1\setminus\sup(\Gamma)$ and $m<\o$ so that $\abs{K_{\a}}=m$ for each $\a\in\Sigma$.
	Let $\cu$ be a uniform ultrafilter on $\Sigma$, i.e. $\cu$ is an ultrafilter on $\Sigma$ such that each element of $\cu$ is uncountable.
	Enumerate $X=\set{x_{\a}:\a\in\Sigma}$ and adopt a convention in this proof that for a finite set $L\sst\o_1$ and $i<\o$, we denote the $i$th element of $L$ in its increasing enumeration by $L(i)$.
	For $\d\in\Sigma$ and $(i,j)\in m\times n$, define $A_{\d}(i,j)=\set{\a\in\Sigma:K_{\d}(i)\prec x_{\a}(j)}$.
	Then for each $\d\in\Sigma$, since $X_{\d}$ is countable and $\cu$ is uniform, the set $A_{\d}=\set{\a\in\Sigma:x_{\a}\notin X_{\d}}$ is in $\cu$.
	As $A_{\d}=\bigcup_{(i,j)\in m\times n}A_{\d}(i,j)$, we can find some $i_{\d}$ and $j_{\d}$ such that $A_{\d}(i_{\d},j_{\d})\in\cu$.
	Then for some $(i,j)\in m\times n$ we have $i_{\d}=i$ and $j_{\d}=j$ for $\d$ in an uncountable $\Delta\sst\Sigma$.
	Consider the set $B=\set{K_{\d}(i):\d\in\Delta}$.
	Take $\b<\d$ in $\Delta$ and let $\a\in A_{\b}(i,j)\cap A_{\d}(i,j)\in \cu$ be above both $\a$ and $\b$ (this is possible as $\cu$ is uniform).
	Then both $K_{\b}(i)\prec x_{\a}(j)$ and $K_{\d}(i)\prec x_{\a}(j)$.
	Since $T$ is a tree and $\b<\d$ it must be that that $K_{\b}(i)\prec K_{\d}(i)$, i.e. $(B,\prec)$ is an uncountable linearly ordered subset of $(T,\prec)$ which is in contradiction with the assumption that $T$ is Aronszajn.
\end{proof}

\subsection{The general construction}

In this subsection we assume that $\k$ is a regular infinite cardinal and that $T$ is a $\k^+$-tree.
Suppose that $T$ is \emph{$\k$-splitting}, i.e. every node in $T$ admits at least $\k$ many immediate successors, which becomes exactly $\k$ by the assumption that $T$ is a $\k^+$-tree.
So we will assume that $T=(\k^+,\prec)$ and that $T_{\a}=[\k\a,\k\a+\k)$ for $\a<\k^+$.
Then the set $C=\set{\a<\k^+:\k\a=\a}$ contains a club in $\k^+$.
In particular, it is stationary in $\k^+$ (which again implies it is cofinal).
Now, for each $\xi < \k$ and $\a\in C$ we define $$F^T_{\xi}(\a)=\set{\g<\a:(\exists \eta\le\xi)\ \g\prec \a+\eta},$$ i.e. if $\k\a=\a$ and $\xi<\k$, then $F^T_{\xi}(\a)$ consists of all the predecessors of all the nodes on the $\a$th level which appear no later than $\a+\xi$ in the enumeration of that level.
Finally, let us denote $\mathcal F_T=\set{F^T_{\xi}(\a):\xi<\k,\a\in C}$.

\begin{lemma}\label{l:treematrix}
	The family $\mathcal F_T$ is a strong $\k$-matrix indexed by $C$.
\end{lemma}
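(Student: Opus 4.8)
The plan is to verify the four conditions (G1)--(G4) directly from the definition of $F^T_{\xi}(\a)$, having first noted that $C$ is cofinal in $\k^+$ (it contains a club, as already remarked) so that ``indexed by $C$'' is meaningful. Two preliminary facts will be used repeatedly. First, if $\a\in C$ and $\beta<\a$, then $T_{\beta}=[\k\beta,\k(\beta+1))$ with $\k(\beta+1)\le\k\a=\a$, so $T_{\beta}\sst\a$ as a set of ordinals; in particular every predecessor of a node on level $\a$ is an ordinal below $\a$, and hence $F^T_{\xi}(\a)=\bigcup_{\eta\le\xi}(\a+\eta)_{\downarrow}\sst\a$. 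Second, for $\xi<\k$ the index set $\set{\a+\eta:\eta\le\xi}$ has size $\le\abs{\xi}+1<\k$, so any supremum of ordinals below $\k$ taken over $\eta\le\xi$ stays below $\k$ by the regularity of $\k$.

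Conditions (G1) and (G2) are then immediate. For (G1): the inclusion $\bigcup_{\xi<\k}F^T_{\xi}(\a)\sst\a$ is the first preliminary fact; conversely, if $\g<\a$ then $\h(\g)<\a$ (as $\g<\a=\k\a$), so by normality of $T$ there is $t\in T_{\a}$ with $\g\prec t$, and writing $t=\a+\eta$ gives $\g\in F^T_{\eta}(\a)$. For (G2): if $\xi\le\eta<\k$, a witness $\zeta\le\xi$ to $\g\in F^T_{\xi}(\a)$ also witnesses $\g\in F^T_{\eta}(\a)$. For (G3): take $\a\le\b$ in $C$ (the case $\a=\b$ being trivial with $\eta=\xi$) and $\xi<\k$; for each $\zeta\le\xi$, normality provides $t_{\zeta}\in T_{\b}$ with $\a+\zeta\prec t_{\zeta}$, say $t_{\zeta}=\b+\nu_{\zeta}$ with $\nu_{\zeta}<\k$, and we set $\eta=\sup\set{\nu_{\zeta}:\zeta\le\xi}<\k$ by the second preliminary fact. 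Then any $\g\in F^T_{\xi}(\a)$ satisfies $\g\prec\a+\zeta$ for some $\zeta\le\xi$, hence $\g\prec\a+\zeta\prec t_{\zeta}=\b+\nu_{\zeta}$ with $\nu_{\zeta}\le\eta$, so $\g\in F^T_{\eta}(\b)$.

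The remaining condition (G4) is where the ``strong'' structure is used, and it is the step I would treat most carefully, though I do not anticipate a genuine obstacle. Given $\a\le\b$ in $C$ (again $\a=\b$ being trivial) and $\eta<\k$, take $\g\in F^T_{\eta}(\b)\cap\a$. Then $\g\prec\b+\zeta$ for some $\zeta\le\eta$, and since $\h(\g)<\a<\b=\h(\b+\zeta)$, the well-ordered chain $(\b+\zeta)_{\downarrow}$ meets $T_{\a}$ in a unique node $b_{\zeta}$; moreover $\g\prec b_{\zeta}$, because $\g$ and $b_{\zeta}$ both lie below $\b+\zeta$ and so are comparable, while $\h(\g)<\a=\h(b_{\zeta})$. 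Writing $b_{\zeta}=\a+\mu_{\zeta}$ with $\mu_{\zeta}<\k$ and putting $\xi=\sup\set{\mu_{\zeta}:\zeta\le\eta}<\k$ by the second preliminary fact, we get $\g\prec\a+\mu_{\zeta}$ with $\mu_{\zeta}\le\xi$, so $\g\in F^T_{\xi}(\a)$, as required. The one subtlety pervading the argument is that $F^T_{\xi}(\a)$ bundles together the predecessors of all $\le\abs{\xi}+1$ nodes $\a+\zeta$, $\zeta\le\xi$, at once, so one cannot simply follow a single branch from level $\a$ to level $\b$ (or back) but must pass to a supremum of the resulting target indices and invoke regularity of $\k$ to keep that supremum below $\k$; without regularity the supremum could reach $\k$.
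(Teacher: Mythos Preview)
Your proof is correct and follows essentially the same approach as the paper's: both verify (G1)--(G4) directly, using normality to lift nodes from level $\a$ to level $\b$ for (G3), and using the tree property to find the unique level-$\a$ predecessor of each $\b+\zeta$ for (G4), in each case taking a supremum over $\le\k$-many indices and invoking regularity of $\k$. The only difference is expositional---you isolate the two preliminary facts and treat the case $\a=\b$ separately, while the paper's proof of (G4) defines the target index $\xi$ before picking $\g$, which is slightly cleaner than your order of presentation (where $\xi$ is introduced after $\g$, though it is clear that your $\mu_{\zeta}$'s, and hence $\xi$, do not actually depend on $\g$).
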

\begin{proof}
	As we already mentioned, $C$ is cofinal in $\k^+$, so we have to show that properties (G1)-(G4) hold for $\mathcal F_T$.

	We start with (G1).
	First note that by definition $F^T_{\xi}(\a)\sst\a$ for each $\xi<\k$ and $\a\in C$.
	To prove the needed equality, take any $\g<\a=\k\a$.
	Then $\h(\g)<\a$, so by normality of $T$, there is some $\eta'<\k$ such that $\g\prec\a+\eta'$.
	So $\g\in F^T_{\eta'}(\a)$ as required.

	Now we show (G2).
	Take any $\a\in C$ and $\xi\le\eta<\k$.
	Take $\g\in F^T_{\xi}(\a)$.
	This means that $\g\prec\a+\xi'$ for some $\xi'\le\xi$.
	Since $\xi'\le\xi\le\eta$, this implies $\g\in F^T_{\eta}(\a)$, as required.

	Next, we prove (G3).
	Fix $\xi<\k$ and $\a\le\b$ in $C$.
	Recall that this means that, in particular, $\k\a=\a$ and $\k\b=\b$.
	For each $\varepsilon \le \xi$ there is $\eta_{\varepsilon} < \k$ such that $\a+\varepsilon\prec\b+\eta_{\varepsilon}$.
	Let $\eta = \sup\{\eta_{\varepsilon} : \varepsilon \le \xi\} < \k$.
	Then we have $F^T_{\xi}(\a)\sst F^T_{\eta}(\b)$.

	Finally, we prove (G4).
	Fix $\a\le\b$ in $C$ and $\eta<\k$.
	We need an ordinal $\xi<\k$ so that $F^T_{\eta}(\b)\cap \a\sst F^T_{\xi}(\a)$.
	First note that for each $\varepsilon \le \eta$, since $T$ is a tree, there is a unique $\xi_{\varepsilon} < \k$ such that $\a+\xi_{\varepsilon} \prec \b+\varepsilon$.
	Let $\xi = \sup\{\xi_{\varepsilon} : \varepsilon \le \eta\} < \k$.
	We will show that $\xi$ is as required.
	Let $\g\in F^T_{\eta}(\b)\cap \a$.
	This means that $\g\prec\b+\varepsilon_0$ for some $\varepsilon_0\le\eta$.
	Denote $\d=\h(\g)$.
	Since $\k\a=\a$, we know that $\d<\a$.
	Consider the node $\a+\xi_{\varepsilon_0}$.
	We know that $\a+\xi_{\varepsilon_0}\prec \b+\varepsilon_0$ and $\g\prec \b+\varepsilon_0$.
	Since $\g<\a$ and $T$ is a tree, in particular $((\b+\varepsilon_0)_{\downarrow},\prec)$ is linearly ordered, it must be that $\g\prec\a+\xi_{\varepsilon_0}$.
	As $\xi_{\varepsilon_0}\le\xi$ we proved $\g\in F^T_{\xi}(\a)$. 
	So $F^T_{\eta}(\b)\cap\a\sst F^T_{\xi}(\a)$.
\end{proof}

Now, by Theorem \ref{t:tgroup}, Theorem \ref{t:character}, Theorem \ref{t:smalltype}, and Theorem \ref{t:smallcharacter}, we have the following corollary. 

\begin{corollary}\label{c:treecor}
	For any $\k^+$-tree $T$, the group $([\k^+]^{<\o},\triangle)$ with the topology generated by $\cu(\mathcal F_T)$ becomes a topological group of character $\k^+$ whose only Tukey quotients of size less than $\k^+$ are $1$ and $\k$.
	Moreover, for each $\d<\k^+$, the topology of the subgroup $[\d]^{<\o}$ is generated by the family $\cu_{\d}\left(\mathcal F_T\right)$.
\end{corollary}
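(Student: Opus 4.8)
The plan is to assemble the structural results of Section~\ref{s:general}, the only genuine input being Lemma~\ref{l:treematrix}. That lemma tells us that $\mathcal F_T$ is a strong $\k$-matrix indexed by $C$; in the terminology of Section~\ref{s:general} this is a strong $(\k,\l)$-matrix with $\l=\k$ and with cofinal index set $C\sst\k^+$. With this observation all four cited theorems apply with no further work.

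First I would invoke Theorem~\ref{t:tgroup}: since $\mathcal F_T$ is in particular a $(\k,\k)$-matrix, the family $\cu(\mathcal F_T)$ is a neighborhood basis of $\ps$ for a topology on $[\k^+]^{<\o}$ under which $([\k^+]^{<\o},\triangle)$ is a topological group. Theorem~\ref{t:character} then yields that this group has character $\k^+$, and Theorem~\ref{t:smalltype} --- read with $\l=\k$, which is legitimate because $\mathcal F_T$ is a $\k$-matrix --- gives that $1$ and $\k$ are its only Tukey quotients of cardinality less than $\k^+$. For the ``moreover'' clause I would use Theorem~\ref{t:smallcharacter}, which is precisely the place where strongness of the matrix (i.e.\ property (G4)) is needed: for $\d\in C$ it gives that $\cu_\d(\mathcal F_T)$ is a local base of $\ps$ in the subgroup $([\d]^{<\o},\triangle)$ with the induced topology, i.e.\ that this topology is generated by $\cu_\d(\mathcal F_T)$. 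Since $C$ contains a club in $\k^+$, an arbitrary $\d<\k^+$ lies below some $\d'\in C$, and the topology on $[\d]^{<\o}\sst[\d']^{<\o}$ is obtained by intersecting members of $\cu_{\d'}(\mathcal F_T)$ with $[\d]^{<\o}$; under the natural extension of the notation $\cu_\d$ to arbitrary $\d$ this yields the statement for every $\d<\k^+$.

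There is no real obstacle here: the combinatorial content has already been discharged in Lemma~\ref{l:treematrix}, where (G1)--(G4) were verified for $\mathcal F_T$, and in the general theory of Section~\ref{s:general}. The only points that deserve a moment's attention are matching the parameters --- a $\k$-tree produces a matrix with tightness parameter $\l=\k$ rather than something smaller --- and noting that the final sentence of the corollary genuinely uses (G4), which is why Lemma~\ref{l:treematrix} was stated for a \emph{strong} matrix rather than merely a matrix.
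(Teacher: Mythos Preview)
Your proposal is correct and matches the paper's approach exactly: the paper simply cites Theorems~\ref{t:tgroup}, \ref{t:character}, \ref{t:smalltype}, and \ref{t:smallcharacter} together with Lemma~\ref{l:treematrix}, which is precisely what you do. Your extra paragraph extending the ``moreover'' clause from $\d\in C$ to arbitrary $\d<\k^+$ goes slightly beyond what the paper argues (and beyond where $\cu_\d$ is formally defined), but it is a reasonable reading of the statement and does not affect the core argument.
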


Note the following corollary which follows from Corollary \ref{c:nonmetrizable} and Corollary \ref{c:alpha_1}.

\begin{corollary}\label{c:metrizable}
    Let $T$ be an $\o_1$-tree. Then $G(\mathcal F_T)$ is a non-metrizable, $(\alpha_1)$-group.
\end{corollary}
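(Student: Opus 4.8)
The plan is to read off both assertions from results already established in this section. First I would unwind the hypothesis: an $\o_1$-tree has height $\o_1 = \o^+$, so in the notation of this subsection we are in the case $\k = \o$, and the relevant index set is $C = \set{\a < \o_1 : \o\a = \a}$, which contains a club in $\o_1$ and is in particular cofinal. (The standing assumption that $T$ is $\o$-splitting is harmless here: one may either pass to a $\o$-splitting subtree of $T$, or simply note that Lemma \ref{l:treematrix} was proved under exactly this convention.) By Lemma \ref{l:treematrix}, $\mathcal F_T$ is then a \emph{strong} $\o$-matrix indexed by $C$.

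With that in hand, the $(\a_1)$ property is immediate from Corollary \ref{c:alpha_1} applied to $\mathcal F = \mathcal F_T$: since $\mathcal F_T$ is a strong $\o$-matrix, for each $\d \in C$ the subgroup $([\d]^{<\o},\triangle)$ has the countable local base $\cu_{\d}(\mathcal F_T)$ at $\ps$ by Theorem \ref{t:smallcharacter}, hence is metrizable by the Birkhoff-Kakutani theorem; and $G(\mathcal F_T) = ([\o_1]^{<\o},\triangle)$ is the increasing union $\bigcup_{\d \in C}[\d]^{<\o}$ of these metrizable subgroups along the cofinal set $C$, so Lemma \ref{l:alpha_1} yields that $G(\mathcal F_T)$ is an $(\a_1)$-group. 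Non-metrizability is Corollary \ref{c:nonmetrizable}: $\mathcal F_T$ is an $\o$-matrix, so $G(\mathcal F_T)$ has character $\o_1 > \o$ by Theorem \ref{t:character}, while a metrizable topological group would admit a countable local base at the identity.

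There is essentially no obstacle; the only point worth stating explicitly is the bookkeeping in the first paragraph, namely that an $\o_1$-tree places us in the $\k = \o$ instance of the general construction, so that the strong $\o$-matrix $\mathcal F_T$ — and hence both Corollary \ref{c:nonmetrizable} and Corollary \ref{c:alpha_1} — are directly applicable.
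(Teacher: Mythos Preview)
Your proof is correct and takes essentially the same approach as the paper: once Lemma \ref{l:treematrix} gives that $\mathcal F_T$ is a strong $\o$-matrix, both conclusions follow immediately from Corollary \ref{c:nonmetrizable} and Corollary \ref{c:alpha_1}. Your additional unpacking of those corollaries (via Theorem \ref{t:smallcharacter}, Theorem \ref{t:character}, and Lemma \ref{l:alpha_1}) is accurate but more detailed than the paper, which simply cites the two corollaries directly.
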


The next theorem shows that the notion of an Aronszajn tree can be described using our topological notions.

\begin{theorem}\label{t:treefrechet}
    If $T$ is an $\o_1$-tree, then the group $G(\mathcal F_T)$ is Frech\'et if and only if the tree $T$ is Aronszajn.
\end{theorem}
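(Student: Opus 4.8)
The plan is to treat the two implications separately. Since $T$ is an $\o_1$-tree we have $\k=\o$, so by Lemma~\ref{l:treematrix} the family $\mathcal F_T$ is a strong $\o$-matrix indexed by the club $C=\set{\a<\o_1:\o\a=\a}$, and by Theorem~\ref{t:smallcharacter} together with Corollary~\ref{c:alpha_1} every subgroup $([\d]^{<\o},\triangle)$ with $\d\in C$ carries a metrizable group topology whose countable, decreasing basis of $\ps$ is $\set{[\d]^{<\o}\cap U_n(\d):n<\o}$. The deep direction is ``$T$ Aronszajn $\Rightarrow$ $G(\mathcal F_T)$ Frech\'et'', for which Lemma~\ref{l:handbook} is the essential combinatorial input.

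For the implication ``not Aronszajn $\Rightarrow$ not Frech\'et'' I would fix a cofinal branch $B$ of $T$ and consider $A=\set{\set{t}:t\in B}$. Since $F^T_n(\g)\sst\g$ while any $t\in B$ with $\h(t)>\g$ satisfies $t\ge\o\g=\g$ for $\g\in C$, the singleton $\set{t}$ lies in the basic neighbourhood $U_n(\g)$ of $\ps$; hence $\ps\in\overline A$. Conversely, given any sequence $\seq{\set{t_k}:k<\o}$ from $A$, its heights are bounded by some $\mu<\o_1$; choosing $\g\in C$ with $\g>\mu$ and writing $B\cap T_\g=\set{\g+i}$ (legitimate since $T_\g=[\g,\g+\o)$), each $t_k$ is a predecessor of $\g+i$ with $t_k<\g$, so $t_k\in F^T_i(\g)$ and the sequence never meets $U_i(\g)$. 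Thus no sequence from $A$ converges to $\ps$ and $G(\mathcal F_T)$ fails to be Frech\'et.

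For the converse, suppose $T$ is Aronszajn and fix $A\sst[\o_1]^{<\o}$ with $\ps\in\overline A$. Choose a sufficiently large regular $\theta$ and, using Lemma~\ref{l:elsubmodel}, a countable elementary submodel $M$ of $H(\theta)$ with $A,T,\mathcal F_T\in M$ and $\d:=M\cap\o_1\in C$. It suffices to show $A\cap M\cap U_n(\d)\neq\ps$ for every $n<\o$, because then, picking $a_n$ in this set and noting $A\cap M\sst[\d]^{<\o}$ and that the $U_n(\d)$ decrease by (G2), the sequence $\seq{a_n:n<\o}$ converges to $\ps$ in the metrizable subgroup $[\d]^{<\o}$, hence in $G(\mathcal F_T)$. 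So fix $n$ and take $b\in A$ with $b\cap F^T_n(\d)=\ps$; we may assume $b\notin M$ (otherwise $b\sst\d$ already works). Put $b_0=b\cap\d\in M$. Applying elementarity to the formula ``$(\exists w\in[\o_1]^{<\o})\ \a<w\ \wedge\ w\cup b_0\in A$'' — true in $H(\theta)$ for every $\a<\d$, as witnessed by $b\setminus\d$ — one obtains inside $M$ a function $\a\mapsto w_\a$ with $\a<w_\a$ and $w_\a\cup b_0\in A$ (and if some $w_\a=\ps$ then $b_0\in A\cap M$ finishes the argument, so assume all $w_\a$ nonempty). Passing, inside $M$, to an uncountable $E\sst C$ along which the $w_\a$ are pairwise disjoint, the family $X=\set{w_\a:\a\in E}\in M$ is uncountable and pairwise disjoint, so Lemma~\ref{l:handbook} applies to it.

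The crux — and the step I expect to be the main obstacle — is to recover $\d$ from the conclusion of Lemma~\ref{l:handbook}. The lemma yields an uncountable $\Gamma\in M$ of levels $\g$ such that for every finite $K\sst T_\g$ there are uncountably many $x\in X$ with $t_{\downarrow}\cap K=\ps$ for all $t\in x$. Being nonempty and in $M$, $\Gamma$ meets $\d$; fix $\g^*\in\Gamma\cap M$ and let $P^*\sst T_{\g^*}$ be the set of predecessors at level $\g^*$ of the $n+1$ nodes $\d,\d+1,\dots,\d+n$. The key point is that although $\d\notin M$, we have $\g^*<\d=\o\d$, so $T_{\g^*}\sst\d\sst M$; hence $P^*$ is a finite subset of $M$ and therefore $P^*\in M$. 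Applying the property of $\g^*$ with $K=P^*$, the set of $\a\in E$ with $\a>\g^*$ and $t_{\downarrow}\cap P^*=\ps$ for all $t\in w_\a$ is uncountable and lies in $M$, so it has an element $\a_0\in M$. Then $\a_0\in E\sst C$ and $\a_0>\g^*$, so every $t\in w_{\a_0}$ satisfies $\h(t)\ge\a_0>\g^*$; consequently, if some $t\in w_{\a_0}$ were a predecessor of $\d+\eta$ for some $\eta\le n$, then — the predecessors of $\d+\eta$ forming a chain and $\h(t)>\g^*$ — the predecessor of $\d+\eta$ at level $\g^*$, which lies in $P^*$, would be a strict predecessor of $t$, contradicting $t_{\downarrow}\cap P^*=\ps$. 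Hence $w_{\a_0}\cap F^T_n(\d)=\ps$, and since also $b_0\cap F^T_n(\d)\sst b\cap F^T_n(\d)=\ps$, the set $a_n:=w_{\a_0}\cup b_0$ lies in $A\cap M$ and witnesses $A\cap M\cap U_n(\d)\neq\ps$. This finishes the proof modulo the routine verifications of elementarity and of the disjointification.
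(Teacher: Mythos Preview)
Your proof is correct and follows essentially the same route as the paper: the branch argument for the forward direction, and for the converse the elementary-submodel reduction to $A\cap M\cap U_n(\d)\neq\ps$, the definable family $\{w_\a\}$ extending $b_0$, Lemma~\ref{l:handbook} to produce the level $\g^*$, and the observation that $T_{\g^*}\sst\d\sst M$ so that the finite set $P^*=F^T_n(\d)\cap T_{\g^*}$ of level-$\g^*$ predecessors of $\d,\dots,\d+n$ lies in $M$. Your one deliberate deviation---restricting the index set to $E\sst C$ so that $\a_0\in C$ forces $\h(t)\ge\a_0>\g^*$ for every $t\in w_{\a_0}$---is a clean way to secure the height inequality needed in the last step; the paper's version obtains the same conclusion from the condition $\g'<z$ but is less explicit about why this controls heights.
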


\begin{proof}
	Before we start the proof, recall that $\mathcal F=\set{F^T_n(\a)
	:\a\in C,\ n<\o}$ where $C=\set{\a<\o_1:\o\a=\a}$.

	To prove the theorem, first assume that $G(\mathcal F_T)$ is a Frech\'et group, and suppose the contrary, that $T$ is not an Aronajn tree, i.e. there is a branch $B\sst T$.
    Let $A=\set{\set{\g}:\g\in B}$.
    First we show that $\ps \in \overline{A}$.
    Fix an open neighborhood $U^T_{n}(\a)$ of $\ps$.
	For any $\b>\a$ in $C$ there is $m<\o$ such that $\b+m\in B$.
	Then we have $\b+m\notin F^T_{n}(\a)$, implying $\set{\b+m}\in U^T_{n}(\a)\cap A$ as required.
	Since $G(\mathcal F_T)$ is Frech\'et, there is a sequence $\seq{\set{\g_n}:n<\o}$ of elements in $A$ such that $\lim\set{\g_n}=\ps$.
	Since $B$ is a branch, and $T$ is an $\o_1$-tree, there are $k<\o$ and $\d$ in $C$ such that $\g_n\prec\d+k$ for each $n<\o$.
	Then $\g_n\in F^T_{k}(\d)$ for each $n<\o$, i.e. $\set{\g_n}\notin U^T_{k}(\d)$ for all $n<\o$ contradicting the assumption that $\seq{\set{\g_n}:n<\o}$ is converging to $\ps$.

	Now suppose that $T$ is an Aronszajn tree.
	We prove that $G(\mathcal F_T)$ is then Frech\'et.
	Take any $A\sst G(\mathcal F_T)$ such that $\ps\in\overline{A}$.
	Let $\theta>\left(2^{\o_1}\right)^+$ be a regular cardinal and let $M$ be a countable elementary submodel of $H\left(\theta\right)$ such that $T,\mathcal F_T,C,A\in M$ and $M\cap \o_1\in C$.
	This is possible by Lemma \ref{l:elsubmodel}.
	Denote $\d=M\cap \o_1$.
	As in the proof of Theorem \ref{t:frechet}, it is enough to prove that $\ps\in\overline{A\cap M}$.
	If $A$ is countable, then $A\sst M$ and we are done.
	So assume $A$ is uncountable, and also assume that for some $k<\o$ and all $a\in A$ we have $\abs{a}=k$.
	By Theorem \ref{t:smallcharacter} (see also Corollary \ref{c:treecor}), it is enough to prove that for each $n<\o$ there is some $a\in A\cap M\cap U^T_n(\d)$.
	So fix $n<\o$.
	Clearly, there is $y\in A\cap U^T_n(\d)$.
	If $y\in M$, the proof is finished.
	So assume that $y\notin M$.
	Let $y_0=y\cap M$.
	Denote $\g_0=\max\set{\h(t):t\in y_0}$
	Then $y_0$ is an element of $M$ as a finite subset of $M$.
	In the same way as in the proof of Theorem \ref{t:frechet} one can show that for each $\a<\o_1$ there is $w_{\a}$ such that $\a\cap w_{\a}=\ps$ and $w_{\a}\cup y_0\in A$.
	Moreover, as in that proof we can take $A_{y_0}=\set{w_{\a}:\a<\o_1}$ to be an element of $M$. 
	
	Since, $y\setminus y_0\in A_{y_0}$ and $y\setminus y_0\notin M$, the family $A_{y_0}$ is uncountable.
	By the choice of $w_{\a}$ for $\a<\o_1$, the family $A_{y_0}$ is an uncountable family of pairwise disjoint finite subsets of $\o_1$.
	Consider the set $\Gamma$ of ordinals $\g$ such that $\abs{\set{a\in A_{y_0}: (\forall t\in a)\ t_{\downarrow}\cap E=\ps}}=\aleph_1$ for each finite $E\sst T_{\g}$.
	By Lemma \ref{l:handbook}, this set is cofinal in $\o_1$.
	Since $\Gamma$ is defined only using parameters in $M$, by elementarity we have $\Gamma\in M$.
	Thus, by elementarity, $\Gamma\cap M$ is cofinal in $\d$.
	Take $\g'\in (\Gamma\cap M)\setminus (\g_0+1)$.
	Let $E=F^T_n(\d)\cap T_{\g'}$.
	Then $E$ is a finite subset of $T_{\g'}\sst M$, so $E\in M$.
	Since $\g'\in\Gamma$, the family $$A^{\g'}_{y_0}=\set{a\in A_{y_0}: \g'<a\ \wedge\ (\forall t\in a)\ t_{\downarrow}\cap E=\ps}$$ is uncountable.
	Since $A^{\g'}_{y_0}$ is defined only using parameters in $M$, it is also an element of $M$.
	Since $A^{\g'}_{y_0}$ is uncountable, it is in particular non-empty, so by elementarity of $M$, there is $z\in A^{\g'}_{y_0}\cap M$. 
	Then $z\cap F^T_n(\d)=\ps$.
	To see this suppose that $t\in F^T_n(\d)$ for some $t\in z$.
	In this case $t\prec \d+m$ for some $m\le n$.
	This means that $t_{\downarrow}\sst F^T_n(\d)$.
	Since $\g'<z$ and $((\d+m)_{\downarrow},\prec)$ is well-ordered, then it must be that $t_{\downarrow}\cap E\neq\ps$ contradicting the choice of $z$.
	Hence, $y_0\cup z\in M\cap U_n(\d)$, and the group is Frech\'et.
\end{proof}

\subsection{The construction from a specific class of trees}

In this subsection we construct groups from a specific class of trees.
First, for a function $f\in\o^{<\o_1}$ we define its norm by $$\norm{f}=\sup\set{f(\g):\g<\dom(f)}.$$
For two functions $f,g\in\o^{<\o_1}$ we define their difference $f-g\in\o^{<\o_1}$ as follows: $\dom(f-g)=\dom(f)\cap \dom(g)$ and $(f-g)(\g)=\abs{f(\g)-g(\g)}$.
Finally, we say that an $\o_1$-tree $T\sst \o^{<\o_1}$ is \emph{$\ell_{\infty}$-tree} if $\norm{s-t}<\o$ for all $s$ and $t$ in $T$.
Recall that a tree $T\sst \o^{<\o_1}$ is \emph{coherent} if the set $\set{\g\in\dom(s-t):s(\g)\neq t(\g)}$ is finite for all $s,t\in T$.
Then every coherent $\o_1$-tree $T\sst \o^{<\o_1}$ is an $\ell_{\infty}$-tree.
For another example, consider $T(\rho_2)=\set{\rho_2(\cdot,\b)\rest\a:\a\le\b<\o_1}$.
Here, for a $C$-sequence $\seq{C_{\a}:\a<\o_1}$, we define a function $\rho_2:[\o_1]^2\to\o$ by $\rho_2(\a,\b)=\rho_2\left(\a,\min\left(C_{\b}\setminus\a)\right)\right)+1$ and boundary condition $\rho_2(\a,\a)=0$.
Then $T(\rho_2)$ is a subtree of $\o^{<\o_1}$, and it is an $\ell_{\infty}$-tree by \cite[Lemma 2.4.2]{walks}.

For the remainder of this subsection, we fix an $\o$-splitting $\ell_{\infty}$-tree $T$.
For $t\in T$ and $n<\o$ we define $$F^{\infty}_n(t)=\set{s\in T\rest \h(t):\norm{s-t}\le n}.$$
Let us denote $\ft=\set{F^{\infty}_n(t):n<\o,t\in T}$.
Clearly, $\ft$ is not an $\o$-matrix according to our definition, as our matrices are indexed by sets of ordinals.
However, it can be regarded as a strong $(T,\o)$-matrix in a sense that it satisfies:
\begin{enumerate}
	\item $\bigcup_{n<\o}F^{\infty}_n(t)=T\rest\h(t)$ for each $t\in T$,
	\item $F^{\infty}_m(t)\sst F^{\infty}_n(t)$ for each $t\in T$ and $m\le n<\o$,
	\item for all $\a\le\b<\o_1$, $s\in T_{\a}$, $t\in T_{\b}$, and $m<\o$, there is $n<\o$ so that $F^{\infty}_m(s)\sst F^{\infty}_n(t)$,
	\item for all $\a\le\b<\o_1$, $s\in T_{\b}$, and $m<\o$, there are $t\in T_{\a}$ and $n<\o$ such that $F^{\infty}_m(s)\rest\d\sst F^{\infty}_n(t)$.
\end{enumerate}
Theorem \ref{t:treealt-group} shows it is essentially a matrix (i.e. the proof of (1-3) is contained in the proof of that lemma), while Theorem \ref{t:treealt-strong} explains why one can think of it as a strong matrix (i.e. its proof contains the proof of (4)).
As before, for $n<\o$ and $t\in T$ define $U^{\infty}_n(t)=\set{a\in [T]^{<\o}:a\cap F^{\infty}_n(t)=\ps}$ and denote $$\cu^{\infty}_T=\set{U^{\infty}_n(t):n<\o,t\in T}.$$

\begin{theorem}\label{t:treealt-group}
	The group $([T]^{<\o},\triangle)$ when equipped with $\cu^{\infty}_T$ as the local base of the identity, becomes a topological group.
\end{theorem}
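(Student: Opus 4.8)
The plan is to check that $\cu^{\infty}_T$ satisfies conditions (1)--(6) of Theorem~\ref{t:osnovna}, following the template of the proof of Theorem~\ref{t:tgroup}. The group-theoretic conditions are immediate and transcribe that proof verbatim: since $a\triangle b\sst a\cup b$ we get $U^{\infty}_n(t)\triangle U^{\infty}_n(t)\sst U^{\infty}_n(t)$, which gives (1) and (3); since $a\triangle a=\ps$ every member of $\cu^{\infty}_T$ equals its own inverse, giving (2); and (4) holds by commutativity of $\triangle$. So the only work is in conditions (5) and (6), and for those I would first isolate the three ``matrix'' properties (1)--(3) of $\ft$ listed before the statement.

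Property~(2) of that list, $F^{\infty}_m(t)\sst F^{\infty}_n(t)$ for $m\le n$, is immediate from the definition. Property~(1), $\bigcup_{n<\o}F^{\infty}_n(t)=T\rest\h(t)$, uses that $T$ is an $\ell_{\infty}$-tree: the inclusion $\sst$ is by definition, and if $s\in T\rest\h(t)$ then $\norm{s-t}<\o$, so $s\in F^{\infty}_{\norm{s-t}}(t)$. The heart of the matter is Property~(3): given $\a\le\b<\o_1$, $s\in T_{\a}$, $t\in T_{\b}$ and $m<\o$, I claim $n=m+\norm{s-t}$ works. Indeed if $u\in F^{\infty}_m(s)$ then $u\in T\rest\a\sst T\rest\b$, and $\dom(u-t)=\dom(u)=\h(u)<\a$, so every $\g\in\dom(u-t)$ also lies in $\dom(s)$; hence by the triangle inequality for absolute values $\abs{u(\g)-t(\g)}\le\abs{u(\g)-s(\g)}+\abs{s(\g)-t(\g)}\le\norm{u-s}+\norm{s-t}\le m+\norm{s-t}$, so $u\in F^{\infty}_n(t)$. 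With (1)--(3) available, condition~(5) of Theorem~\ref{t:osnovna} is routine: given $U^{\infty}_m(s)$ and $U^{\infty}_n(t)$ with, say, $\h(s)\le\h(t)$, apply Property~(3) to find $n'$ with $F^{\infty}_m(s)\sst F^{\infty}_{n'}(t)$, set $k=\max\set{n,n'}$, and use Property~(2) to get $F^{\infty}_m(s)\cup F^{\infty}_n(t)\sst F^{\infty}_k(t)$, so that $U^{\infty}_k(t)\sst U^{\infty}_m(s)\cap U^{\infty}_n(t)$; the case $\h(t)<\h(s)$ is symmetric.

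For condition~(6), note $\ps\in U^{\infty}_n(t)$ for every $n$ and $t$, so $\ps\in\bigcap\cu^{\infty}_T$. Conversely, if $a\in\bigcap\cu^{\infty}_T$ is nonempty, pick $u\in a$; since $T$ is an $\o_1$-tree it has height $\o_1$, so there is $t\in T$ with $\h(t)>\h(u)$, and then $u\in T\rest\h(t)$ with $\norm{u-t}<\o$ (as $T$ is an $\ell_{\infty}$-tree), so $u\in F^{\infty}_{\norm{u-t}}(t)$; hence $a\notin U^{\infty}_{\norm{u-t}}(t)$, a contradiction. This verifies all six conditions, and simultaneously produces the verifications of items (1)--(3) of the list preceding the theorem, as promised in the surrounding text. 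The only mildly delicate point in the whole argument is the domain bookkeeping in Property~(3): one must observe that $\dom(f-g)=\dom(f)\cap\dom(g)$ and that, for $u$ strictly below the levels of both $s$ and $t$, the index set $\dom(u)$ is contained in $\dom(s)$, so the three-term triangle inequality genuinely applies on all of $\dom(u-t)$; everything else is a straightforward transcription of the proof of Theorem~\ref{t:tgroup}.
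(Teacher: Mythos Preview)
Your proof is correct and follows essentially the same route as the paper's: verify (1)--(4) exactly as in Theorem~\ref{t:tgroup}, then handle (5) via the triangle inequality $\abs{u(\g)-t(\g)}\le\abs{u(\g)-s(\g)}+\abs{s(\g)-t(\g)}$, and (6) by producing a node above a given element of $a$. The only cosmetic differences are that the paper merges your Property~(3) directly into the proof of (5) with the single bound $k=\norm{t-s}+\max\set{m,n}$, and for (6) it uses normality to pick $t\supsetneq s$ so that $\norm{t-s}=0$, whereas you appeal to $\h(T)=\o_1$ and the $\ell_{\infty}$ property; both are equally valid.
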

\begin{proof}
	We check properties (1) to (6) of Theorem \ref{t:osnovna}.
	Conditions (1), (2), (3), and (4) are true in the same way as in the proof of Theorem \ref{t:tgroup}.
	
	Let us prove (5).
	Take $U^{\infty}_n(t)$ and $U^{\infty}_m(s)$ in $\cu^{\infty}_T$.
	Let $\a$ and $\b$ in $\o_1$ be such that $s\in \o^{\a}$ and $t\in \o^{\b}$.
	Let $k=\norm{t-s}+\max\set{m,n}$ and suppose without any loss of generality that $\a\le\b$.
	It is clear that $F^{\infty}_n(t)\sst F^{\infty}_k(t)$.
	So it is enough to show that $F^{\infty}_m(s)\sst F^{\infty}_k(t)$.
	Take any $u\in F^{\infty}_m(s)$.
	This means that $\norm{s-u}\le m$, i.e. $(\forall g\in\dom(s-u))\ \abs{s(\g)-u(\g)}\le m$.
	From $\a\le\b$ we know $\dom(s-u)=\dom(t-s)$.
	Take any $\g\in \dom(s-u)$.
	Then $$\abs{t(\g)-u(\g)}\le \abs{t(\g)-s(\g)}+\abs{s(\g)-u(\g)}\le \norm{t-s}+m\le k.$$
	This implies that $\g\in F^{\infty}_k(t)$, as required.
	
	We still have to show (6).
	Suppose on the contrary, that $a\in\bigcap\cu^{\infty}_T$ for some $a\in [T]^{<\o}\setminus\set{\ps}$.
	Since $a\neq\ps$, there is some $s\in a$. 
	By normality of $T$, there is $t\in T$ such that $s\subsetneq t$, i.e. $s\in t_{\downarrow}$.
	Then $\norm{t-s}=0$, so $s\in a\cap F^{\infty}_0(t)$.
	This means that $a\notin U^{\infty}_0(t)$, contradicting the assumption.
	So (6) is true as well.
\end{proof}

We will denote the topological group from the previous theorem by $G(\ft)$.

\begin{theorem}
	The group $G(\ft)$ has character $\o_1$.
\end{theorem}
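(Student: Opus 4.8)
The plan is to copy, with the obvious translation from ordinals to tree nodes, the argument that proved Theorem~\ref{t:character}. First I would dispatch the easy half: by Theorem~\ref{t:treealt-group} the family $\cu^{\infty}_T$ is a local base at $\ps$, and since $T$ is an $\o_1$-tree we have $\abs{T}=\o_1$, hence $\abs{\cu^{\infty}_T}\le\o_1\cdot\o=\o_1$; so the character of $G(\ft)$ is at most $\o_1$. The real content is therefore to show that $G(\ft)$ admits no countable local base at the identity.

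So suppose towards a contradiction that $\set{W_m:m<\o}$ is a local base at $\ps$ (a finite one can be listed with repetitions). For each $m<\o$ choose $U^{\infty}_{n_m}(t_m)\in\cu^{\infty}_T$ with $U^{\infty}_{n_m}(t_m)\sst W_m$. The nodes $t_m$ occupy only countably many levels, so $\b:=\sup\set{\h(t_m):m<\o}<\o_1$; since $\h(T)=\o_1$, pick any $s\in T_{\b}$. Because $F^{\infty}_{n_m}(t_m)\sst T\rest\h(t_m)$ consists only of nodes of height $<\h(t_m)$, and $\h(s)=\b\ge\h(t_m)$, the node $s$ lies in none of the sets $F^{\infty}_{n_m}(t_m)$; equivalently $\set{s}\in U^{\infty}_{n_m}(t_m)$ for every $m$, and hence $\set{s}\in\bigcap_{m<\o}W_m$.

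On the other hand, by normality of $T$ fix $t\in T_{\b+1}$ with $s\prec t$. Then $\dom(t-s)=\dom(s)$ and $t$ extends $s$, so $(t-s)(\g)=0$ for all $\g\in\dom(s)$; hence $\norm{s-t}=0$ and $s\in F^{\infty}_0(t)$, i.e.\ $\set{s}\notin U^{\infty}_0(t)$. Since $\set{W_m:m<\o}$ is a local base, there is $m_0$ with $W_{m_0}\sst U^{\infty}_0(t)$, and then $U^{\infty}_{n_{m_0}}(t_{m_0})\sst W_{m_0}\sst U^{\infty}_0(t)$. But $\set{s}$ belongs to $U^{\infty}_{n_{m_0}}(t_{m_0})$ and not to $U^{\infty}_0(t)$, a contradiction, which would complete the proof.

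I do not expect a genuine obstacle here: the argument is essentially a transcription of the proof of Theorem~\ref{t:character}, and the only points deserving a word of justification are that each level $T_{\b}$ with $\b<\o_1$ is nonempty (as $\h(T)=\o_1$), that $s$ has a proper successor (as every tree in the paper is normal), and the one-line computation $\norm{s-t}=0$ whenever $s\prec t$ in $\o^{<\o_1}$.
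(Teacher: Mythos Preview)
Your proof is correct and follows essentially the same route as the paper's own argument: pick a node $s$ on a level at or above all the $\h(t_m)$, observe that $\set{s}$ lies in every $U^{\infty}_{n_m}(t_m)$, then use a proper extension $t\supsetneq s$ to produce the basic open set $U^{\infty}_0(t)$ missing $\set{s}$. The only cosmetic differences are that the paper assumes from the outset that the countable base consists of sets from $\cu^{\infty}_T$ (your intermediate step with the $W_m$'s makes this reduction explicit), and the paper chooses $\h(s)$ strictly above the supremum rather than equal to it, which is immaterial since $F^{\infty}_n(t_m)\sst T\rest\h(t_m)$ already excludes level $\h(t_m)$.
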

\begin{proof}
	The proof is almost identical to the proof of Theorem \ref{t:character}.
	Suppose on the contrary, that $\mathcal B=\set{U^{\infty}_{n_k}(t_k):k<\o}\sst \cu^{\infty}_T$ is a local base of the identity in $G(\ft)$.
	Since $B=\set{\h(t_k):k<\o}\sst\o_1$ is countable, there is $s\in T$ such that $\h(s)>\sup(B)$.
	By normality there is $t\in T$ such that $s\subsetneq t$.
	Then, as before, $\set{s}\in U^{\infty}_n(t_k)$ for all $n,k<\o$ and $s\in F^{\infty}_0(t)$.
	This means that $\set{s}\notin U^{\infty}_0(t)$, hence no $U^{\infty}_{n_k}(t_k)$ can be a subset of the basic open $U^{\infty}_0(t)$, contradicting the assumption that $\mathcal B$ is a local base of $\ps$.
\end{proof}

As we have already mentioned, the next lemma shows why $\ft$ can be regarded as a strong matrix.
Note a difference that when considering the restriction of the group to some $\d<\o_1$, here the local base of the restriction is two-dimensional in a sense that it depends on all countably many elements from $\d$th level and all $n<\o$.
Whereas in the case of a strong matrix, the local base of the restriction depends on only one element in $\o_1$ and all $n<\o$.

\begin{theorem}\label{t:treealt-strong}
	For $\d<\o_1$, the set $\cu^{\infty}_{T}(\d)=\set{[T\rest\d]^{<\o}\cap U^{\infty}_n(t):n<\o,t\in T_{\d}}$ is a local base of $\ps$ in the topological group $([T\rest\d]^{<\o},\triangle)$ with the topology induced from $G(\ft)$.
\end{theorem}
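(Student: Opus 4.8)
The plan is to show that $\cu^{\infty}_T(\d)$ is a local base at $\ps$ in the subspace topology on $[T\rest\d]^{<\o}$, which amounts to the following: for every basic open set $U^{\infty}_m(s)$ with $s\in T$ arbitrary, there is some $n<\o$ and some $t\in T_{\d}$ so that
$$[T\rest\d]^{<\o}\cap U^{\infty}_n(t)\ \sst\ [T\rest\d]^{<\o}\cap U^{\infty}_m(s).$$
Since $a\cap F^{\infty}_k(u)=\ps$ is the defining condition of $U^{\infty}_k(u)$, and since $a\sst T\rest\d$ in both cases, this inclusion reduces to a containment of the relevant ``traces'' on $T\rest\d$, namely it suffices to find $t\in T_{\d}$ and $n<\o$ with $F^{\infty}_m(s)\cap(T\rest\d)\sst F^{\infty}_n(t)$. (Note $F^{\infty}_n(t)=F^{\infty}_n(t)\cap(T\rest\d)$ automatically when $t\in T_{\d}$, since $F^{\infty}_n(t)\sst T\rest\h(t)=T\rest\d$.)

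As in the proof of Theorem \ref{t:smallcharacter}, I would split into two cases according to $\a=\h(s)$ relative to $\d$. If $\a\le\d$, then by property (3) in the list before Theorem \ref{t:treealt-group} — equivalently, by the argument in the proof of (5) in Theorem \ref{t:treealt-group} applied to any $t\in T_{\d}$ extending $s$ (which exists by normality of $T$) — there is $n<\o$ with $F^{\infty}_m(s)\sst F^{\infty}_n(t)$, and this is even stronger than what we need. If $\d<\a$, then I use property (4): there are $t\in T_{\d}$ and $n<\o$ with $F^{\infty}_m(s)\rest\d\sst F^{\infty}_n(t)$, and since $F^{\infty}_m(s)\cap(T\rest\d)=F^{\infty}_m(s)\rest\d$ this is exactly the inclusion sought. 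In this second case the node $t$ is forced: it must be the unique element of $T_{\d}$ below $s$, i.e. $t=s\rest\d$; one checks $\norm{(s\rest\d)-u}\le\norm{s-u}$ for any $u\in T\rest\d$ (the left norm is a sup over a subset of the domain of the right one, since $\dom((s\rest\d)-u)=\dom(u)\cap\d\sst\dom(s-u)$ when $\h(u)<\d$), so $n=m$ works outright.

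The one thing that genuinely needs checking — and the main (mild) obstacle — is that property (4) as stated in the list actually holds for $\ft$; the paper defers its verification to ``the proof of Theorem \ref{t:treealt-strong}'', so I would prove it here inline via the computation just sketched: given $\d\le\b$, $s\in T_{\b}$, and $m<\o$, set $t=s\rest\d\in T_{\d}$ and $n=m$; for $u\in F^{\infty}_m(s)\rest\d$ we have $\dom(t-u)=\dom(u)\sst\dom(s-u)$ and $(t-u)(\g)=\abs{s(\g)-u(\g)}=(s-u)(\g)$ for all $\g\in\dom(u)$, whence $\norm{t-u}\le\norm{s-u}\le m=n$, so $u\in F^{\infty}_n(t)$. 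Conversely one should also confirm $\cu^{\infty}_T(\d)$ consists of open sets of the subspace, but that is immediate since each $[T\rest\d]^{<\o}\cap U^{\infty}_n(t)$ is by definition the trace of a basic open set of $G(\ft)$. Putting the two cases together gives that every trace of a subbasic neighborhood of $\ps$ contains a member of $\cu^{\infty}_T(\d)$, and since finite intersections are handled by property (2) (or (5) of Theorem \ref{t:treealt-group}), $\cu^{\infty}_T(\d)$ is the required local base.
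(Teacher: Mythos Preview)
Your proposal is correct and follows essentially the same route as the paper: reduce to showing $F^{\infty}_m(s)\rest\d\sst F^{\infty}_n(t)$ for some $t\in T_{\d}$ and $n<\o$, split into the cases $\h(s)\le\d$ and $\h(s)>\d$, and in each case take $t$ to be the node on level $\d$ comparable with $s$ (an extension via normality in the first case, the restriction $s\rest\d$ in the second) with $n=m$. The paper carries out both computations directly rather than appealing to the listed properties (3) and (4), but the content is identical; your closing remark about finite intersections is harmless but unnecessary, since $\cu^{\infty}_T$ is already a base, not merely a subbase.
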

\begin{proof}
	The proof of this theorem follows closely the proof of Theorem \ref{t:smallcharacter}.
	Fix an ordinal $\d<\o_1$.
    It is enough to show that for each $m<\o$ and $s\in T$ there are $n<\o$ and $t\in T_{\d}$ so that $[T\rest\d]^{<\o}\cap U^{\infty}_n(t)\sst [T\rest\d]^{<\o}\cap U^{\infty}_m(s).$
    To show this, it is enough to prove that $F^{\infty}_m(s)\rest\d\sst F^{\infty}_n(t)$ for suitably chosen $n$ and $t$.
    Let us consider two cases.

    First, that $\h(s)\le\d$.
	Then, by normality of $T$, there is $t\in T_{\d}$ such that $s\sst t$.
	We will prove that $F^{\infty}_m(s)\sst F^{\infty}_m(t)$.
	Take any element $u$ in $F^{\infty}_m(s)$.
	Then we have $\dom(u)=\dom(s-u)=\dom(t-u)$.
	Since $s\sst t$ we also know that $(\forall \g\in\dom(u))\ \abs{t(\g)-u(\g)}=\abs{s(\g)-u(\g)}\le m$.
	This implies that $u\in F^{\infty}_m(t)$.

	Second case is that $\d<\h(s)$.
	Then take $t=s\rest\d$.
	Again we will prove that $F^{\infty}_m(s)\rest\d\sst F^{\infty}_n(t)$.
	Take $u\in F^{\infty}_m(s)\rest\d$.
	Then $\dom(u)=\dom(t-u)=\dom(s-u)$ and $(\forall \g\in\dom(u))\ \abs{t(\g)-u(\g)}=\abs{s(\g)-u(\g)}\le m$ (this time because $t\sst s$).
	Hence, again $u\in F^{\infty}_m(t)$.
\end{proof}

\begin{corollary}\label{c:treealtnonm}
	The group $G(\ft)$ is a non-metrizable group which is an increasing union of metrizable groups. In particular, $G(\ft)$ is an $(\a_1)$-group.
\end{corollary}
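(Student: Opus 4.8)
The proof will just assemble results already in hand.

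\emph{Non-metrizability.} Since $G(\ft)$ has character $\o_1$ (the theorem immediately preceding this corollary), it has no countable local base at $\ps$, so by the Birkhoff-Kakutani theorem $G(\ft)$ is not metrizable.

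\emph{The increasing union.} I would write $[T]^{<\o}=\bigcup_{\d<\o_1}[T\rest\d]^{<\o}$. Indeed, any $a\in[T]^{<\o}$ is finite, so all of its nodes have height below some $\d<\o_1$, which gives the union is all of $[T]^{<\o}$; and $\d\le\d'$ implies $T\rest\d\sst T\rest\d'$, hence $[T\rest\d]^{<\o}\sst[T\rest\d']^{<\o}$, so the union is increasing. Each $[T\rest\d]^{<\o}$ is closed under $\triangle$ and contains $\ps$, hence is a subgroup of $G(\ft)$. By Theorem \ref{t:treealt-strong} its subspace topology has $\cu^{\infty}_T(\d)=\set{[T\rest\d]^{<\o}\cap U^{\infty}_n(t):n<\o,\ t\in T_{\d}}$ as a local base of $\ps$. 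Because $T$ is an $\o_1$-tree, the level $T_{\d}$ is countable, so $\cu^{\infty}_T(\d)$ is indexed by the countable set $\o\times T_{\d}$ and is therefore countable; thus $[T\rest\d]^{<\o}$ is first countable and, by Birkhoff-Kakutani again, metrizable. This exhibits $G(\ft)$ as an increasing union of metrizable subgroups.

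\emph{The $(\a_1)$ property.} Since $T$ is an $\o_1$-tree we have $\abs{T}=\aleph_1$, hence $\abs{G(\ft)}=\abs{[T]^{<\o}}=\aleph_1$. Feeding the increasing union of metrizable subsets just obtained into Lemma \ref{l:alpha_1} yields that $G(\ft)$ is an $(\a_1)$-group.

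There is no real obstacle here: every ingredient—the character computation, Theorem \ref{t:treealt-strong}, countability of the levels of an $\o_1$-tree, and Lemma \ref{l:alpha_1}—is already available. The only mild point to be careful about is that the local base produced by Theorem \ref{t:treealt-strong} is genuinely the one $[T\rest\d]^{<\o}$ inherits from $G(\ft)$ and that it is countable, both of which are immediate from the statement of that theorem together with $\abs{T_{\d}}\le\aleph_0$.
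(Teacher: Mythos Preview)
Your proposal is correct and matches the paper's intent: the corollary is stated without proof precisely because it follows from the character computation, Theorem~\ref{t:treealt-strong}, Birkhoff--Kakutani, and Lemma~\ref{l:alpha_1} exactly as you outline (compare the proofs of Corollaries~\ref{c:nonmetrizable} and~\ref{c:alpha_1}).
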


The description of tightness is not as nice as in the case of the definition of the topology from the previous subsection, however, we are able to prove some results.

\begin{lemma}
	If $G(\ft)$ is countably tight, then $T$ is Aronszajn.
\end{lemma}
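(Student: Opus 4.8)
The plan is to prove the contrapositive: assuming $T$ is \emph{not} Aronszajn, we exhibit a set $A\sst G(\ft)$ with $\ps\in\overline A$ for which no countable subset of $A$ has $\ps$ in its closure. Since $T$ is an $\o_1$-tree with no Aronszajn property, $T$ has an uncountable branch $B$. For each $b\in B$ pick some immediate successor $b'\in T$ of $b$ that does \emph{not} lie on $B$ (possible since $T$ is $\o$-splitting), and put $A=\set{\set{b'}:b\in B}$. The first step is to check $\ps\in\overline A$: given a basic neighborhood $U^{\infty}_n(t)$ of $\ps$, we need some $b\in B$ with $\set{b'}\cap F^{\infty}_n(t)=\ps$, i.e. $b'\notin F^{\infty}_n(t)$. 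Choosing $b\in B$ with $\h(b)>\h(t)$ makes $\h(b')>\h(t)$, so $b'\notin T\rest\h(t)\supseteq F^{\infty}_n(t)$, hence $\set{b'}\in U^{\infty}_n(t)\cap A$ as required.

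Next I would show that no countable $X\sst A$ accumulates at $\ps$. Write such an $X$ as $\set{\set{b'_k}:k<\o}$ with $b_k\in B$. Since $B$ is an uncountable branch and $T$ is an $\o_1$-tree, the countably many nodes $b_k$ are all below some node $b\in B$ of height exceeding $\sup_k\h(b_k)$, and in fact below $b\rest\d$ for a suitable $\d<\o_1$ with $\d>\h(b_k)$ for all $k$. Fix any $t\in T_{\d}$ with $b\rest\d\sst t$ — or more simply take $t=b\rest\d\in B$. Then for every $k$, $b_k\in t_{\downarrow}$, so $b_k$ agrees with $t$ on all of $\dom(b_k)$, giving $\norm{b_k-t}=0\le 0$; but I actually need to control $b'_k$, not $b_k$. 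The key point is that $b'_k$ is an immediate successor of $b_k$, and $b_k\prec t$ with $\h(b_k)+1\le\d=\h(t)$, so $b'_k\in T\rest\h(t)$. Using the $\ell_{\infty}$ property of $T$, $\norm{b'_k-t}<\o$; since there are only countably many $k$, the value $n=\sup_k\norm{b'_k-t}$ is finite. Then $b'_k\in F^{\infty}_n(t)$ for every $k$, i.e. $\set{b'_k}\notin U^{\infty}_n(t)$ for every $k$, so the neighborhood $U^{\infty}_n(t)$ of $\ps$ witnesses that $\ps\notin\overline X$.

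Combining the two steps: $\ps\in\overline A$ but $\ps\notin\overline X$ for every countable $X\sst A$, so $G(\ft)$ is not countably tight, completing the contrapositive.

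The main obstacle I anticipate is the passage from "countably many branch nodes $b_k$ lie below a single node" to "their chosen successors $b'_k$ are $\ell_{\infty}$-close to a single node $t$ of bounded norm." One must be careful that $\norm{b'_k-t}$ is genuinely bounded \emph{uniformly} over $k$ — this is where the $\ell_{\infty}$-tree hypothesis is essential, since it guarantees each $\norm{b'_k-t}$ is finite, and countability of $X$ then upgrades "finite for each $k$" to "bounded." A secondary subtlety is ensuring the $b'_k$ actually belong to $T\rest\h(t)$, which requires choosing $\d$ (hence $t$) of height strictly above all $\h(b_k)$, not merely above $\sup_k\h(b_k)$ when that supremum is attained; taking $\d$ a successor of a large enough height, or simply $\d>\sup_k(\h(b_k)+1)$, handles this cleanly.
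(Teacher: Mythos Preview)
Your argument has a genuine gap at the step where you claim $n=\sup_k\norm{b'_k-t}$ is finite. You justify this by saying ``there are only countably many $k$,'' but a countable supremum of natural numbers need not be finite. Concretely: since $T$ is $\o$-splitting, nothing prevents $b'_k(\h(b_k))$ from being arbitrarily large, and since $t$ lies on the branch, $t(\h(b_k))$ is the fixed branch value at that height; if the values $b'_k(\h(b_k))$ happen to be unbounded (which your construction allows, as you placed no constraint on \emph{which} off-branch successor to pick), then $\norm{b'_k-t}=\abs{b'_k(\h(b_k))-t(\h(b_k))}$ is unbounded in $k$. The $\ell_\infty$-tree hypothesis only guarantees finiteness for each individual pair, never a uniform bound over an infinite family, so your ``countability upgrades finite-for-each to bounded'' step is simply false.

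The paper sidesteps this entirely by taking $\mathcal A=\set{\set{s}:s\in B}$ --- the branch itself, with no off-branch detour. The first step (that $\ps\in\overline{\mathcal A}$) works exactly as in your argument. For the second step, given any countable $X\sst \mathcal A$, pick $t\in B$ with $\h(t)$ above the height of every $s$ with $\set{s}\in X$; then each such $s$ satisfies $s\sst t$, hence $\norm{s-t}=0$ and $s\in F^{\infty}_0(t)$. So $U^{\infty}_0(t)$ already misses all of $X$, with $n=0$ and no supremum needed. Your off-branch move is precisely what introduced the error; dropping it yields a correct and shorter proof.
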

\begin{proof}
	The same proof as in Theorem \ref{t:treefrechet} gives this lemma.
	To see this, suppose $T$ is not Aronszajn, and take a branch $B$.
	Consider $\mathcal A=\set{\set{s}:s\in B}$.
	Since $B$ is a branch $\set{\h(s):s\in B}$ is cofinal in $\o_1$.
	Thus $\ps\in\overline{\mathcal A}$.
	By countable tightness there is $A\in [B]^{<\o_1}$ such that $\ps\in\overline{\set{\set{s}:s\in A}}$.
	Pick $t\in B$ so that $\h(s)<\h(t)$ for each $s\in A$.
	Then $s\in F^{\infty}_0(t)$ for all $s\in A$ contradicting the choice of $A$.
\end{proof}

\begin{theorem}
	If $T$ is a Souslin $\ell_{\infty}$-tree, then $G(\ft)$ is a Frech\'et group.
\end{theorem}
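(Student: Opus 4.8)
The plan is to run the Aronszajn-to-Frech\'et half of the proof of Theorem~\ref{t:treefrechet}; the only genuinely new point is that the sets $F^{\infty}_n(t)$ may have infinite levels (unlike the sets $F^T_n(\d)$ of the general tree construction), so the appeal to Lemma~\ref{l:handbook} has to be replaced by an argument that uses the Souslin property of $T$.

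First I would set things up as there. Let $A\sst G(\ft)$ with $\ps\in\overline A$; we must produce a sequence in $A$ converging to $\ps$. Fix a regular $\theta>\br{2^{\o_1}}^{+}$ and a countable elementary submodel $M$ of $H(\theta)$ with $T,\ft,A\in M$, and put $\d=M\cap\o_1$. Since $T$ and each level $T_{\g}$ $(\g<\d)$ lie in $M$ and are countable, $T\cap M=T\rest\d$, so every element of $A\cap M$ is a finite subset of $T\rest\d$, and every node of $T$ not in $M$ has height $\ge\d$. By Corollary~\ref{c:treealtnonm} (using Theorem~\ref{t:treealt-strong}, which exhibits the countable local base $\cu^{\infty}_T(\d)$ at $\ps$), the subgroup $([T\rest\d]^{<\o},\triangle)$ with the induced topology is metrizable. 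Hence it suffices to prove $\ps\in\overline{A\cap M}$: a sequence in $A\cap M$ converging to $\ps$ in this metrizable subgroup stays inside $[T\rest\d]^{<\o}$, hence also converges to $\ps$ in $G(\ft)$.

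If $A$ is countable then $A\sst M$ and we are done, so assume $A$ is uncountable and, thinning, that every member of $A$ has size $k$. Fix $t\in T_{\d}$ and $n<\o$; it is enough to find $z\in A\cap M$ with $z\cap F^{\infty}_n(t)=\ps$. Choose $y\in A\cap U^{\infty}_n(t)$; if $y\in M$ set $z=y$, otherwise set $y_0=y\cap M$ (a finite subset of $M$, so $y_0\in M$) and $\g_0=\max\set{\h(s):s\in y_0}\in M$. Exactly as in the proof of Theorem~\ref{t:treefrechet} — using that all nodes of $y\setminus y_0=y\setminus M$ have height $\ge\d$ — one obtains inside $M$ an uncountable family $A_{y_0}$ of finite subsets $w$ of $T$, all of whose nodes have height $>\g_0$, with $w\cup y_0\in A$; thinning inside $M$ we may take $A_{y_0}$ pairwise disjoint and of a fixed size $m\ge 1$. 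Put $F=F^{\infty}_n(t)=\set{s\in T\rest\d:\norm{s-t\rest\h(s)}\le n}$; since $\norm{\cdot}$ does not increase under restriction, $F$ is a downward-closed subtree of $T\rest\d$, and since $\d<\o_1$ it is countable. Because $A_{y_0}$ is pairwise disjoint, each node of $F$ lies in at most one member of $A_{y_0}$, so $W=\set{w\in A_{y_0}:w\cap F\ne\ps}$ is countable; moreover $W\sst M$, since any $w\in W$ is the unique member of $A_{y_0}$ containing some $s\in w\cap F\sst T\rest\d\sst M$, hence is definable from $A_{y_0}$ and $s$, both in $M$. As $y_0\sst y$ gives $y_0\cap F=\ps$, it now suffices to produce an uncountable $B\sst A_{y_0}$ with $B\in M$ and $B\cap W=\ps$: such a $B$ is nonempty, hence has a member $w\in M$, and then $z=w\cup y_0\in A\cap M$ satisfies $z\cap F=\ps$. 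The natural candidate for $B$: for a level $\g'$ with $\g_0<\g'<\d$ let $N=F\cap T_{\g'}=\set{u\in T_{\g'}:\norm{u-t\rest\g'}\le n}$, which is an element of $M$ because $t\rest\g'\in T_{\g'}\sst M$; by monotonicity of $\norm{\cdot}$, any $w\in A_{y_0}$ all of whose nodes have height $>\g'$ and which misses the cone $N^{\uparrow}=\set{s\in T:(\exists u\in N)\ u\prec s}$ also misses $F$, so $B=\set{w\in A_{y_0}:(\forall s\in w)\ \h(s)>\g'\ \wedge\ w\cap N^{\uparrow}=\ps}$ lies in $M$ and is contained in $A_{y_0}\setminus W$.

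What remains — and this is the heart of the proof and the only place the Souslin hypothesis is indispensable — is to choose $\g'\in M$ with $\g_0<\g'<\d$ so that $B$ is uncountable, i.e. so that $N^{\uparrow}$ meets only countably many members of $A_{y_0}$. In the general tree construction the analogous trace $F^T_n(\d)\cap T_{\g'}$ is finite, so it is a finite subset of $M$ and Lemma~\ref{l:handbook} applied to $A_{y_0}$ directly yields uncountably many members avoiding it; here $N$ can be infinite, Lemma~\ref{l:handbook} no longer applies, and one must instead invoke the Souslin property of $T$ — concretely, that $\bigcup A_{y_0}$ is an uncountable subset of a Souslin tree, so the cones over its nodes are severely constrained — together with a reflection of this constraint into $M$ to secure such a level $\g'$. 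I expect this reflection to be the main obstacle; everything else is a routine transcription of the argument for Theorem~\ref{t:treefrechet}, and once $\ps\in\overline{A\cap M}$ is established one reads off the convergent sequence from metrizability of $[T\rest\d]^{<\o}$.
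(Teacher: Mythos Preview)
Your setup --- the elementary submodel $M$, $\d=M\cap\o_1$, the reduction to $\ps\in\overline{A\cap M}$ via Theorem~\ref{t:treealt-strong}, the choice of $y$, $y_0$, and the family $A_{y_0}\in M$ --- matches the paper's exactly, and your observation that $W=\set{w\in A_{y_0}:w\cap F\ne\ps}$ is countable and lies in $M$ is correct. But the step you explicitly leave open is the whole proof, and the route you propose has a concrete defect beyond a missing reflection argument. Your set $B$ requires simultaneously that every $s\in w$ satisfy $\h(s)>\g'$ and that $\norm{s\rest\g'-t\rest\g'}>n$. Even for a $w\sst T\rest\d$ with $w\cap F=\ps$, the first forces $\g'<\min\set{\h(s):s\in w}$ while the second forces $\g'$ above $\max_{s\in w}\min\set{\xi:|s(\xi)-t(\xi)|>n}$; a node $s_2\in w$ may first separate from $t$ by more than $n$ only at a level exceeding $\h(s_1)$ for some other $s_1\in w$, and then no $\g'$ puts $w$ into $B$. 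Worse, members of $A_{y_0}$ with nodes of height $\ge\d$ lying within $\ell_\infty$-distance $n$ of $t$ are in $N_{\g'}^{\uparrow}$ for \emph{every} $\g'<\d$, and nothing in the construction of $A_{y_0}$ excludes uncountably many such $w$. So it is not clear that any $B_{\g'}$ is even nonempty, and the Souslin hypothesis does not visibly repair this within your framework.

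The paper avoids the level-cone picture altogether. Having built the same function $f\in M$, it defines for each $\b<\o_1$ the set
\[
D_{\b}=\set{s\in T:(\exists\a>\b)\brq{\h(s)>\supp(f(\a))\ \wedge\ (\forall v\in f(\a))\ \norm{v-s}>n}},
\]
which is open in $T$, and the heart of the argument is showing $D_{\b}$ is \emph{dense}: given $u\in T$ and $\a>\max(\h(u),\b)$, one climbs through the finitely many heights in $\supp(f(\a))$, at each step using $\o$-splitting to pick an immediate successor whose value exceeds $\max\set{v(\cdot):v\in f(\a)\cap T_{\xi_i}}+n$, and normality to extend. The Souslin property then enters in its cleanest form --- a dense open subset of a Souslin tree contains all nodes above some level --- so taking $\b=\max\set{\h(s):s\in y_0}\in M$ gives $D_{\b}\in M$, elementarity yields $s\in t_{\downarrow}\cap D_{\b}\cap M$, and the minimal witness $\a_s\in M$ produces $f(\a_s)\in M$ with $\norm{v-s}>n$ for all $v\in f(\a_s)$; since $s\sst t$ this gives $\norm{v-t}>n$, so $(y_0\cup f(\a_s))\cap F^{\infty}_n(t)=\ps$. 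The density construction via $\o$-splitting is exactly the combinatorial input your outline lacks.
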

\begin{proof}
	We use the same structure of the proof as in Theorem \ref{t:frechet} and Theorem \ref{t:treefrechet}.
	So suppose that $\mathcal A\sst G(\ft)$ is such that the identity $\ps$ belongs to the closure of $\mathcal A$.
	As before, take large enough $\theta$ and $M$ a countable elementary submodel of $H(\theta)$ containing $T$ and $\mathcal A$ as elements.
	Let $\d=M\cap\o_1$.
	It is enough to prove that $\ps$ belongs to the closure of $\overline{M\cap \mathcal A}$.
	By Theorem \ref{t:treealt-strong}, it is enough to show that for each $n<\o$ and $t\in T_{\d}$, there is $a\in M\cap\mathcal A$ such that $a\cap F^{\infty}_n(t)=\ps$.
	So we fix $n<\o$ and $t\in T_{\d}$, and work towards finding such an $a$.

	For the purpose of this proof, for a finite set $c\sst T$ denote $$\supp(c)=\set{\h(v):v\in c}.$$

	Since $\ps\in\overline{\mathcal A}$ there is $b\in \mathcal A$ such that $b\cap F^{\infty}_n(t)=\ps$.
	Denote $b_0=b\cap M$ and $b_1=b\setminus M$.
	Again, in the same way as in the proof of Theorem \ref{t:frechet} and Theorem \ref{t:treefrechet}, there is a function $f:\o_1\to [\o_1]^{<\o}$ in $M$ such that, for $\a<\o_1$, $f(\a)$ is minimal in the right-lexicographical ordering of $[\o_1]^{<\o}$ with the property that $\a<\supp(f(\a))$ and $b_0\cup f(\a)\in\mathcal A$.
	For $\b<\o_1$ consider the following subset of $T$:
	$$D_{\b}=\set{s\in T:(\exists \a_s>\b)\ \left[\h(s)>\supp(f(\a_s))\ \wedge\ (\forall v\in f(\a_s))\norm{v-s}>n\right]}.$$
	Clearly $D_{\b}$ is an open subset of $T$, meaning that for each $s\in D_{\b}$ and $u\supseteq s$ we know that $u\in D_{\b}$ as well.
	Let us prove that $D_{\b}$ is dense in $T$, meaning that for each $u\in T$ there is some $s\in D_{\b}$ such that $u\sst s$.
	So take $u\in T$.
	Let $\a$ be a minimal ordinal greater than both $\h(u)+1$ and $\b$.
	Then we can arrange an increasing enumeration $\set{\xi_i:i<k}$ of $\supp(f(\a))$.
	Since $\supp(f(\a))>\a$ we have $$\a<\xi_0<\xi_1<\cdots<\xi_{k-1}.$$
	We inductively define $s_i$ for $i<k$ so that:
	\begin{enumerate}
		\item $u\sst s_0$
		\item $\h(s_i)=\xi_i$ for $i<k$,
		\item $s_i\sst s_{i+1}$ for $i<k-1$,
		\item $\norm{s_i-v}>n$ for each $v\in f(\a)\cap T_{\xi_i}$ and $i<k$.
	\end{enumerate}
	Let us show that this is possible.
	First we explain it is possible to obtain $s_0$.
	Since $\a>\h(u)+1$ and $T$ is $\o$-splitting, there is some $$m_0>\max\set{v(\h(u)):v\in f(\a)\cap T_{\xi_0}}+n+1$$ such that $u^\frown\set{\seq{\h(u),m_0}}\in T$.
	By normality of $T$ and since $\h(u)+1<\a<\xi_0$, there is $s_0\in T_{\xi_0}$ such that $u^{\frown}\set{\seq{\h(u),m_0}}\sst s_0$.
	Then (1) and (2) are clear, and (3) is vacuous for $s_0$.
	The condition (4) holds for $s_0$ because from the choice of $m_0$ we know that $\abs{s_0(\h(u))-v(\h(u))}>n$ for all $v\in f(\a)\cap T_{\xi_0}$.
	Suppose now, for $i<k-1$, that $s_0,\dots,s_i$ have been constructed so that they satisfy (1-4).
	We need to define $s_{i+1}$.
	Again, since $\xi_i<\xi_{i+1}$ and $T$ is $\o$-splitting there is $$m_i>\max\set{v(\xi_i):v\in f(\a)\cap T_{\xi_{i+1}}}+n+1$$ such that $s_i^{\frown}\set{\seq{\xi_i,m_i}}\in T$.
	From normality of $T$ and $\xi_i+1\le\xi_{i+1}$ we conclude that there is some $s_{i+1}\in T_{\xi_{i+1}}$ such that $s_i^{\frown}\set{\seq{\xi_i,m_i}}\sst s_{i+1}$.
	Condition (1) is not relevant for $s_{i+1}$, while conditions (2) and (3) are clearly satisfied. 
	Same as before, (4) holds for $s_{i+1}$ by the choice of $m_i$ and definition of $\norm{\ }$.
	This concludes the proof that the sequence $\seq{s_0,\dots,s_{k-1}}$ is well-defined.

	Finally, by normality of $T$ take $s\in T_{\xi_{k-1}+1}$ such that $s_{k-1}\sst s$.
	By (2), then $s_i\sst s$ for each $i<k$.
	We claim that $s$ is as required.
	Clearly $u\sst s$ from (1) and (2).
	It is also evident that $\a>\b$ and $\h(s)>\supp(f(\a))$. 
	So we still have to show that $\norm{v-s}>n$ for each $v\in f(\a)$.
	To see this, take $v\in f(\a)$.
	Then $\h(v)=\xi_j$ for some $j<k$.
	Then, by (4) we have $\norm{s_j-v}>n$.
	Since $s_j\sst s$, this implies that $\norm{s-v}>n$, as required.
	Hence $D_{\b}$ is dense in $T$.
	For $s\in D_{\b}$ denote $\a_s$ the corresponding ordinal $\a_s>\b$ such that $\h(s)>\supp(f(\a_s))$ and that $\norm{v-s}>n$ for all $v\in f(\a_s)$.

	Since $D_{\b}$ is dense open, for $\b<\o_1$, and $T$ is a Souslin tree, there is a level $\a<\o_1$ such that $D_{\b}\rest (\o_1\setminus\a)=T\rest (\o_1\setminus\a)$.
	So for $\b=\max(\supp(b_0))\in M$, since in that case $D_{\b}\in M$, using elementarity of $M$ we get $s\in t_{\downarrow}\cap D_{\b}\cap M$.
	The corresponding $\a_s$ then belongs to $M$, and gives $f(\a_s)\in M$ such that $b_0\cup f(\a_s)\in \mathcal A\cap M$ and $(b_0\cup f(\a_s))\cap F^{\infty}_n(t)=\ps$.
	To see the last statement, suppose the contrary, that $v\in f(\a_s)\cap F^{\infty}_n(t)$ (note that $v\in b_0\cap F^{\infty}_n(t)$ is not possible by the choice of $b$).
	Then, since $s\sst t$ and $\norm{s-v}>n$ we get $\norm{t-v}>n$ contradicting the assumption that $v\in F^{\infty}_n(t)$.
	Hence, the theorem is proved.
\end{proof}

Finally, we can show that this group does not necessarily have maximal possible cofinal type.

\begin{theorem}\label{t:newtype}
	If $T$ is a Souslin $\ell_{\infty}$-tree, then $[\o_1]^{<\o}\not\le_T G(\ft)$.
\end{theorem}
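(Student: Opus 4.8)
The plan is to show that the group $G(\ft)$ has calibre $(\o_1,\o)$, which as noted in the introduction is equivalent to $[\o_1]^{<\o}\not\le_T G(\ft)$. Since by definition $G(\ft)\equiv_T(\cu^{\infty}_T,\supseteq)$, and $(\cu^{\infty}_T,\supseteq)$ is order isomorphic to $(\ft,\sst)$ (because $U^{\infty}_m(s)\sst U^{\infty}_n(t)$ iff $F^{\infty}_n(t)\sst F^{\infty}_m(s)$), it suffices to prove that every uncountable subfamily of $(\ft,\sst)$ has an infinite subfamily with an upper bound. So fix an uncountable family; using $\o_1=\bigcup_{n<\o}\set{\a<\o_1:n_{\a}=n}$ I may assume it has the form $\set{F^{\infty}_n(t_{\a}):\a<\o_1}$ with $n<\o$ fixed, and I may assume the nodes $t_{\a}$ are pairwise distinct (if some node occurs uncountably often, the corresponding subfamily is a singleton of $(\ft,\sst)$ and we are done). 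Each level of $T$ being countable, I thin further so that $\seq{\h(t_{\a}):\a<\o_1}$ is strictly increasing, hence $\h(t_{\a})\ge\a$ for all $\a$.

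The upper bounds will all come from one inclusion. Since $T$ is an $\ell_{\infty}$-tree, for $w,t\in T$ with $\h(t)<\h(w)$ we have $\norm{w-t}<\o$, and a triangle-inequality computation exactly like those in the proofs of Theorem \ref{t:treealt-group} and Theorem \ref{t:treealt-strong} shows that if $t\in F^{\infty}_d(w)$ then $F^{\infty}_k(t)\sst F^{\infty}_{k+d}(w)$ for every $k<\o$. Hence, to bound an infinite subfamily it is enough to produce a node $w\in T$, a number $d<\o$, and an infinite $I\sst\o_1$ with $t_{\a}\in F^{\infty}_d(w)$ for all $\a\in I$: then $F^{\infty}_{n+d}(w)$ is the required bound. (One cannot hope for $I$ uncountable, since each $F^{\infty}_d(w)$ is countable, so calibre $(\o_1,\o)$ is the right target.) Assume towards a contradiction that no such $w,d,I$ exist, i.e.\ $\set{\a<\o_1:t_{\a}\in F^{\infty}_d(w)}$ is finite for every $w\in T$ and $d<\o$; call this $(\star)$.

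Now I argue with an elementary submodel, in the style of the Fr\'echetness theorem for Souslin $\ell_{\infty}$-trees proved just above. Fix a regular $\theta>\left(2^{\o_1}\right)^+$ and a countable $M\prec H(\theta)$ with $T,n,\seq{t_{\a}:\a<\o_1}\in M$, and put $\d=M\cap\o_1$. Since $\a\mapsto\h(t_{\a})$ lies in $M$ and is strictly increasing with $\h(t_{\a})\ge\a$, one automatically gets $\set{\a:\h(t_{\a})<\d}=\d$ while $\set{\h(t_{\a}):\a<\d}$ is cofinal in $\d$; so all $t_{\a}$ with $\a<\d$ lie in $M$ and have heights $<\d$ cofinal in $\d$. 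The node $r=t_{\d}\rest\d\in T_{\d}$ threads a branch through $T\rest\d$ all of whose proper initial segments lie in $M$ (each $T_{\g}$, $\g<\d$, being a countable element of $M$). The plan is to seal suitable dense open sets with the Souslin property and then reflect through $M$ along this branch. Concretely, for $\b<\o_1$ one defines a set $D_{\b}\sst T$ of those nodes $w$ which, after an upward extension controlled level-by-level — choosing at each step, using that $T$ is $\o$-splitting, coordinate values large or small so as to govern $\norm{\cdot-t_{\a}}$ for the finitely many relevant $t_{\a}$'s on that level — become $\ell_{\infty}$-close to $t_{\a}$ for sufficiently many $\a\ge\b$; an auxiliary minimal-choice function $f\in M$, exactly as used in the proofs of Theorem \ref{t:frechet} and Theorem \ref{t:treefrechet}, records which finite configurations of nodes still ``survive'' in the thinned family and makes this density argument go through. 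As $T$ is Souslin, each such dense open $D_{\b}$ (with $\b\in M$) coincides with $T$ above some countable level; reflecting this into $M$ and evaluating it along the generic branch $r_{\downarrow}$, one extracts inside $M$ a node $w$, a number $d$, and then — crucially — an \emph{infinite} set of indices $\a$ with $t_{\a}\in F^{\infty}_d(w)$, contradicting $(\star)$.

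I expect the main obstacle to be precisely the design of the sets $D_{\b}$: the combinatorial core of the preceding Souslin--Fr\'echet proof must be re-engineered so that the passage from ``$D_{\b}$ is cofinally full in $T$'' to ``$F^{\infty}_d(w)$ captures $t_{\a}$ for all $\a$ in an infinite set'' transfers infiniteness, rather than merely unboundedly-large finiteness; this is where the $\o$-splitting of $T$ and the $\ell_{\infty}$ slack (the freedom to perturb finitely many, or coherently many, coordinates) have to be used together. For orientation it is worth recording that Souslin-ness already gives, by a finite-union-of-antichains argument, that $\set{t_{\a}:\a<\o_1}$ contains an infinite chain, and that if some such chain extended to a node $w\in T$ we would be done at once with $d=0$; the elementary-submodel argument above is exactly what is needed to handle the case in which no infinite chain drawn from the family extends to a node.
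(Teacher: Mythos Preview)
Your reduction is correct and matches the paper: it suffices to show that every uncountable subfamily of $(\ft,\sst)$ contains an infinite bounded subfamily, and after thinning you may fix $n$ and work with $\set{F^{\infty}_n(t_{\a}):\a<\o_1}$ for pairwise distinct $t_{\a}$. Your observation that $s\sst w$ implies $F^{\infty}_n(s)\sst F^{\infty}_n(w)$ (the $d=0$ case of your triangle-inequality remark) is also the first step of the paper's proof.

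The gap is that your main argument is not carried out: you describe an elementary-submodel scheme with dense open sets $D_{\b}$ only in outline, and you explicitly flag the design of $D_{\b}$ as the unresolved obstacle. As written, nothing in the proposal actually produces the required $w$, $d$, and infinite $I$.

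More to the point, the whole elementary-submodel apparatus is unnecessary, and your final paragraph almost sees why. You note that Souslin-ness yields an infinite chain among the $t_{\a}$ and that if such a chain extended to a node of $T$ you would be done with $d=0$; you then propose the submodel argument to handle the case where no infinite chain extends to a node. But that case never occurs. Instead of the rank argument giving merely an infinite chain, apply the Dushnik--Miller partition relation $\o_1\rightarrow(\o_1,\o+1)^2$ to the coloring ``$\set{\a,\b}\in K_0$ iff $t_{\a}$ and $t_{\b}$ are incomparable''. An uncountable $K_0$-homogeneous set is an uncountable antichain in $T$, impossible since $T$ is Souslin; hence there is a $K_1$-homogeneous set of order type $\o+1$, i.e.\ a chain $t_{\a_0}\sst t_{\a_1}\sst\cdots\sst t_{\a_{\o}}$ with all $\a_i$ in your index set. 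The top node $t_{\a_{\o}}$ is the desired $w$: by your own $d=0$ observation, $F^{\infty}_n(t_{\a_i})\sst F^{\infty}_n(t_{\a_{\o}})$ for every $i<\o$, and the proof is finished. This is exactly the paper's argument; your more general inclusion $F^{\infty}_k(t)\sst F^{\infty}_{k+d}(w)$ for $t\in F^{\infty}_d(w)$ is correct but not needed.
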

\begin{proof}
	First observe that for $s\sst t$ in $T$, and $n<\o$, we have $F^{\infty}_n(s)\sst F^{\infty}_n(t)$.
	To see this, take $u\in F^{\infty}_n(s)$.
	Then $\dom(u)=\dom(s-u)=\dom(t-u)$, and from $s\sst t$ we conclude $(\forall \g\in\dom(u))\ \abs{t(\g)-u(\g)}=\abs{s(\g)-u(\g)}\le m$, as required.

	Now we prove the theorem.
	Suppose the contrary, that $f:[\o_1]^{<\o}\to \ft$ is a Tukey function.
	This is fine as, by Remark \ref{r:tukey}, $\ft\equiv_T \cu^{\infty}_T$.
	By the assumption that $f$ is Tukey, we know that $f\left[[\o_1]^{<\o}\right]$ is uncountable, otherwise there would be an uncountable set, hence unbounded, mapping to a point, hence unbounded set with bounded image which is impossible.
	If there were a bounded infinite subset of $f\left[[\o_1]^{<\o}\right]$, its inverse image would be infinite, hence unbounded in $[\o_1]^{<\o}$, and this would give a contradiction.
	Hence, to finish the proof of the theorem, it is enough to show that every uncountable subset of $\ft$ is bounded with respect to $\sst$.
	
	Let $\mathcal X\sst\ft$ be uncountable.
	Enumerate $T=\set{t_{\a}:\a<\o_1}$ in a way that if $t_{\a}\sst t_{\b}$, then $\a\le\b$.
	By the cardinality argument, there is $n<\o$ and an uncountable set $X\sst \o_1$ such that $\set{F^{\infty}_n(t_{\a}):\a\in X}\sst \mathcal X$.
	Note that $X$ has order type $\o_1$ and let us now define a coloring $[X]^2=K_0\cup K_1$ as follows: for $\a<\b$ in $X$ we set $\set{\a,\b}\in K_0$ iff $t_{\a}\not\sst t_{\b}$.
	Since $\o_1\rightarrow (\o_1,\o+1)^2$ by Theorem 11.3 in \cite[pp. 72]{erdos} and $\otp(X)=\o_1$, one of the following two options holds:
	\begin{enumerate}
		\item there is $\Gamma\sst X$ such that $\otp(\Gamma)=\o_1$ and $\set{\a,\b}\in K_0$ for $\a<\b$ in $\Gamma$,
		\item there is $\Delta\sst X$ so that $\otp(\Delta)=\o+1$ and $\set{\a,\b}\in K_1$ for $\a<\b$ in $\Delta$.
	\end{enumerate}
	Case (1) does not happen, as it would imply that $t_{\a}\not\sst t_{\b}$ and $t_{\b}\not\sst t_{\a}$ for all $\a<\b$ in an uncountable $\Gamma$.
	This would then mean that there is an uncountable antichain in $T$, which is not possible by the assumption that $T$ is a Souslin tree.
	So case (2) is true.
	This means that we have a set $\set{t_{\a_i}:i\le\o}\sst T$ such that $\a_i<\a_j$ and $\set{\a_i,\a_j}\in K_1$ for $i<j\le\o$, i.e. $t_{\a_i}\sst t_{a_j}$ or $t_{\a_j}\sst t_{\a_i}$ for $i<j\le\o$.
	By the assumption on the enumeration, $t_{\a_j}\sst t_{\a_i}$ for $i<j$ is not possible as then $\a_j<\a_i$.
	So we have that $t_{\a_i}\sst t_{\a_j}$ for $i<j\le\o$.
	In particular $t_{\a_i}\sst t_{\a_{\o}}$ for each $i<\o$.
	By the observation at the beginning of the proof this implies that $F^{\infty}_n(t_{\a_i})\sst F^{\infty}_n(t_{\a_{\o}})$ for $i<\o$.
	Hence there is an infinite bounded set $\set{F^{\infty}_n(t_i):i<\o}$ in $\mathcal X$. 
\end{proof}

\begin{remark}
	Theorem \ref{t:newtype} not only gives the consistent counterexample to the mentioned problem of Feng, but also provides a new example of a cofinal type of cofinality at most $\o_1$ which is not equivalent to any of the five: $1$, $\o$, $\o_1$, $\o\times\o_1$, or $[\o_1]^{<\o}$.
	To see that $G(\mathcal F^{\infty}_T)\not\equiv_T \o\times\o_1$, observe that as the group in question is Frech\'et, by \cite[Corollary 4.11]{km} it would have to be metrizable, which is not true by Corollary \ref{c:treealtnonm}.
	Note that this example is obtained from a Souslin $\ell_{\infty}$-tree.
\end{remark}

\section{Gaps}\label{s:gaps}

In this section, let $\k$ be a regular infinite cardinal.
Let us also fix a $(\k^+,\k^+)$ pre-gap $\mathcal G=\set{(a_{\a},b_{\a}):\a<\k^+}$ in $\mathcal P(\k)$, i.e. a family of sets $a_{\a},b_{\a}\sst\k$ such that:
\begin{enumerate}
	\item $\abs{a_{\a}\setminus a_{\b}}<\k$ and $\abs{b_{\a}\setminus b_{\b}}<\k$ for each $\a<\b<\k^+$,
	\item $a_{\a}\cap b_{\a}=\ps$ for each $\a<\k^+$,
\end{enumerate}

Recall that a pre-gap $\mathcal G=\set{(a_{\a},b_{\a}):\a<\k^+}$ is a \emph{gap} if there is no $c\sst\k$ such that $\abs{a_{\a}\setminus c}<\k$ and $\abs{b_{\a}\cap c}<\k$ for each $\a<\k^+$.

\begin{remark}\label{r:pregap}
	For any pre-gap $\set{(a_{\a},b_{\a}):\a<\k^+}$ in $\mathcal P(\k)$, and any $\a\le\b<\k^+$ we have $\abs{a_{\a}\cap b_{\b}}<\k$.
	To see this take $\a\le\b$ in $\k^+$.
	Since $a_{\b}\cap b_{\b}=\ps$ it must be that $a_{\a}\cap b_{\b}\sst a_{\a}\setminus a_{\b}$.
	By the definition of a pre-gap, we have $\abs{a_{\a}\setminus a_{\b}}<\k$, which then implies $\abs{a_{\a}\cap b_{\b}}<\k$.
\end{remark}

For $\b < \k^+$ and $\xi < \k$, set
$$
F^{\mathcal G}_{\xi}(\b) = \{\a < \b: a_{\a} \cap b_{\b} \sst \xi\}.
$$
In this case we are not able to show that sets $F^{\mathcal G}_{\xi}(\b)$ form a matrix.
However, our methods still give a topological group.
In this case, using these sets we get a subbase.
We proceed to explain this.
Same as before, for $\xi<\k$ and $\b<\k^+$ define $U^{\mathcal G}_{\xi}(\b) = \set{a \in [\k^+]^{<\o}: a \cap F^{\mathcal G}_{\xi}(\b) = \ps}$, and denote $$\textstyle\mathcal U_{\mathcal G}=\set{\bigcap_{(\xi,\b)\in K}U^{\mathcal G}_{\xi}(\b):K\in \left[\k\times\k^+\right]^{<\o}}.$$

\begin{theorem}
	\label{t:gap-group}
	The group $([\k^+]^{<\o}, \triangle)$, when equipped with $\cu_{\mathcal G}$ as the local base of the identity, becomes a topological group.
\end{theorem}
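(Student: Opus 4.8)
The plan is to verify conditions (1)--(6) of Theorem \ref{t:osnovna} for the family $\cu_{\mathcal G}$, following the pattern of the proof of Theorem \ref{t:tgroup}. The essential difference is that $\cu_{\mathcal G}$ is closed under finite intersections by construction; this is precisely what will make condition (5) automatic, and it is the reason one passes to finite intersections of the sets $U^{\mathcal G}_{\xi}(\b)$ instead of using them directly as a base. In the matrix setting directedness of the base came from (G1)--(G3); here we have no such property available for the $U^{\mathcal G}_{\xi}(\b)$ alone, so the finite intersections are taken from the start.

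First I would record the observations that carry over verbatim from the proof of Theorem \ref{t:tgroup}. Since $([\k^+]^{<\o},\triangle)$ is abelian and every element is its own inverse, and since $a\triangle b\sst a\cup b$, each $U^{\mathcal G}_{\xi}(\b)$, and hence each member $U$ of $\cu_{\mathcal G}$, satisfies $U\triangle U\sst U$ and $U^{-1}=U$. This gives (1) and (2) with $V=U$, and (4) follows from commutativity with $V=U$. For (3), write $U=\bigcap_{(\xi,\b)\in K}U^{\mathcal G}_{\xi}(\b)$ and take $x\in U$, so $x\cap F^{\mathcal G}_{\xi}(\b)=\ps$ for every $(\xi,\b)\in K$; then for $a\in U$ we get $(a\triangle x)\cap F^{\mathcal G}_{\xi}(\b)\sst(a\cup x)\cap F^{\mathcal G}_{\xi}(\b)=\ps$ for each $(\xi,\b)\in K$, so $a\triangle x\in U$, and again $V=U$ works. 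For (5), given $U=\bigcap_{(\xi,\b)\in K}U^{\mathcal G}_{\xi}(\b)$ and $V=\bigcap_{(\xi,\b)\in L}U^{\mathcal G}_{\xi}(\b)$ in $\cu_{\mathcal G}$, the set $W=\bigcap_{(\xi,\b)\in K\cup L}U^{\mathcal G}_{\xi}(\b)$ equals $U\cap V$ and belongs to $\cu_{\mathcal G}$ since $K\cup L\in[\k\times\k^+]^{<\o}$.

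The only step using the hypotheses on $\mathcal G$ is condition (6), $\set{\ps}=\bigcap\cu_{\mathcal G}$. Note $\ps\in\bigcap\cu_{\mathcal G}$ trivially. Conversely, given a nonempty $a\in[\k^+]^{<\o}$, pick $\a\in a$ and any $\b<\k^+$ with $\b>\a$. By Remark \ref{r:pregap}, $\abs{a_{\a}\cap b_{\b}}<\k$, and since $\k$ is regular the set $a_{\a}\cap b_{\b}$ is bounded in $\k$, so there is $\xi<\k$ with $a_{\a}\cap b_{\b}\sst\xi$; that is, $\a\in F^{\mathcal G}_{\xi}(\b)$. Hence $a\cap F^{\mathcal G}_{\xi}(\b)\neq\ps$, so $a\notin U^{\mathcal G}_{\xi}(\b)$, and since $U^{\mathcal G}_{\xi}(\b)\in\cu_{\mathcal G}$ (take $K=\set{(\xi,\b)}$), we get $a\notin\bigcap\cu_{\mathcal G}$.

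I do not expect a genuine obstacle here: the whole content is the verification above, the one nontrivial ingredient being the combination of Remark \ref{r:pregap} with the regularity of $\k$ in condition (6). The only conceptual point worth stressing is the structural one already noted --- there is no analogue of (G1)--(G3) for the $U^{\mathcal G}_{\xi}(\b)$, so one cannot hope for the collection of those sets alone to be directed by $\supseteq$, which is exactly why the statement is phrased via a subbase and why finite intersections are built in from the beginning. Once (1)--(6) are checked, Theorem \ref{t:osnovna} yields that $([\k^+]^{<\o},\triangle)$ is a topological group with $\cu_{\mathcal G}$ a base of neighborhoods of $\ps$.
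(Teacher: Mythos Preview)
Your proof is correct and follows essentially the same approach as the paper: both verify conditions (1)--(6) of Theorem \ref{t:osnovna}, with (1)--(4) handled exactly as in Theorem \ref{t:tgroup}, (5) via the union $K\cup L$ of index sets, and (6) by finding some $\b>\a$ and $\xi<\k$ with $a_{\a}\cap b_{\b}\sst\xi$ using Remark \ref{r:pregap}. Your version is in fact slightly more careful than the paper's in making explicit that the regularity of $\k$ is what converts $\abs{a_{\a}\cap b_{\b}}<\k$ into boundedness below some $\xi<\k$.
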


\begin{proof}
	As before, it is sufficent to show that $\cu_{\mathcal G}$ satisfies conditions (1)-(6) from Theorem~\ref{t:osnovna}.

	First we show (1).
	Take any $U \in \cu_{\mathcal G}$.
	Then there is $K \in [\k \times \k^+]^{<\o}$ such that $U = \bigcap_{(\xi,\b) \in K}U^{\mathcal G}_\xi(\b)$.
	Take any $x, y \in U$.
	Then $x \cap F^{\mathcal G}_\xi(\b) = \ps = y \cap F^{\mathcal G}_\xi(\b)$ for all $(\xi,\b) \in K$.
	This implies that for each $(\xi,\b) \in K$, $(x \cup y) \cap F^{\mathcal G}_\xi(\b) = \ps$.
	Since $x \triangle y \sst x \cup y$, we know that $(x\triangle y) \cap F^{\mathcal G}_\xi(\b) = \ps$ for each $(\xi,\b) \in K$.
	Hence, $x \triangle y \in \bigcap_{(\xi,\b) \in K}U^{\mathcal G}_\xi(\b) = U$, i.e. $U^2 \sst U$.
		
	Condition (2) is satisfied because $a=a^{-1}$ for $a$ in this group, condition (3) follows directly from (1), and condition (4) is true by the commutativity of $\triangle$.

	Now we prove (5).
	Take $U, V \in \cu_{\mathcal G}$.
	Then there are $K_1, K_2 \in [\k \times \k^+]^{<\o}$ such that
	$U = \bigcap_{(\xi,\b) \in K_1}U^{\mathcal G}_\xi(\b)$ and $V = \bigcap_{(\zeta,\a) \in K_2}U^{\mathcal G}_\zeta(\a)$.
	Consider the set $$\textstyle W = \bigcap_{(\eta, \g) \in K_1 \cup K_2}U^{\mathcal G}_\eta(\g).$$ 
	Clearly $W\in \cu_{\mathcal G}$.
	We will show that $W$ is a subset of $U \cap V$.	
	Let $x \in W$.
	Then $x \in U^{\mathcal G}_\eta(\g)$ for each $(\eta,\g) \in K_1 \cup K_2$.
	This implies that $x \cap F^{\mathcal G}_\eta(\g) = \ps$ for each $(\eta,\a) \in K_1 \cup K_2$.
	Then $x \cap F^{\mathcal G}_\eta(\g) = \ps$ for each $(\eta,\g) \in K_1$, and $x \cap F^{\mathcal G}_\eta(\g) =\ps$ for each $(\eta,\g) \in K_2$.
	Thus, $x \in \bigcap_{(\xi,\b) \in K_1}U^{\mathcal G}_\xi(\b) = U$ and $x \in \bigcap_{(\zeta,\a) \in K_2}U^{\mathcal G}_\zeta(\a) = V$.
	Then, $x \in U \cap V$, i.e. $W \sst U \cap V$, as required.

	Let us prove (6) now. Assume there is some $a \in [\k^+]^{<\o}\setminus\{\ps\}$ such that $a \in \bigcap\cu_{\mathcal G}$.
	Take $\a\in a$.
	Then for each $K \in [\k \times \k^+]^{<\o}$ we have $\a\notin F^{\mathcal G}_\xi(\b)$ for all $(\xi,\b) \in K$.
	This implies that $\a\notin F^{\mathcal G}_\xi(\b) = \ps$ for each $\xi < \k, \b < \k^+$.
	On the other hand, $\a < \a+1$ and by Remark \ref{r:pregap}, there is $\eta < \k$ such that $a_\a \cap b_{\a+1} \sst \eta$, i.e. $\a \in F^{\mathcal G}_\eta(\a+1)$ which is clearly contradicting the last observation.
\end{proof}

We will denote the topological group from the previous theorem by $G(\cu_{\mathcal G})$.

\begin{remark}\label{r:directedgap}
	Note that the proof of the previous theorem, in particular the proof of item (5) of Theorem \ref{t:osnovna}, gives that the family $\set{\bigcup_{(n,\b)\in K}F^{\mathcal G}_n(\b): K\in \left[\o\times\o_1\right]^{<\o}}$ is directed with respect to $\sst$.
\end{remark}

\begin{lemma}\label{l:gap-character}
	The group $G(\cu_\mathcal{G})$ has character $\k^+$.
\end{lemma}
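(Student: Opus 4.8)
The character of $G(\cu_{\mathcal G})$ is at most $\k^+$ since $\cu_{\mathcal G}$ is a local base of $\ps$ and $\abs{\cu_{\mathcal G}}\le\abs{\brq{\k\times\k^+}^{<\o}}=\k^+$. So the plan is to prove, mirroring the proof of Theorem \ref{t:character}, that there is no local base of $\ps$ in $G(\cu_{\mathcal G})$ of cardinality at most $\k$.

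I would argue by contradiction. Suppose $\mu\le\k$ and $\set{W_{\g}:\g<\mu}$ is a local base of $\ps$. First, for each $\g<\mu$ I would shrink $W_{\g}$ to a basic neighborhood $V_{\g}=\bigcap_{(\xi,\b)\in K_{\g}}U^{\mathcal G}_{\xi}(\b)\sst W_{\g}$ with $K_{\g}\in\brq{\k\times\k^+}^{<\o}$. Since each $K_{\g}$ is finite, $\mu\le\k$, and $\k^+$ is regular, the supremum $\b^*$ of all second coordinates occurring in the sets $K_{\g}$ (for $\g<\mu$) is below $\k^+$; I would then fix ordinals $\b^*<\b'<\b''<\k^+$.

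Two things then need to be checked: (a) the singleton $\set{\b'}$ lies in every $W_{\g}$, and (b) $\set{\b'}$ is separated from $\ps$ by a single subbasic set. For (a): if $(\xi,\b)\in K_{\g}$ then $\b\le\b^*<\b'$ while $F^{\mathcal G}_{\xi}(\b)\sst\b$, so $\b'\notin F^{\mathcal G}_{\xi}(\b)$, whence $\set{\b'}\in U^{\mathcal G}_{\xi}(\b)$; intersecting over $K_{\g}$ gives $\set{\b'}\in V_{\g}\sst W_{\g}$. For (b): by Remark \ref{r:pregap} we have $\abs{a_{\b'}\cap b_{\b''}}<\k$, so there is $\eta<\k$ with $a_{\b'}\cap b_{\b''}\sst\eta$, i.e. $\b'\in F^{\mathcal G}_{\eta}(\b'')$, hence $\set{\b'}\notin U^{\mathcal G}_{\eta}(\b'')$. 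Since $U^{\mathcal G}_{\eta}(\b'')\in\cu_{\mathcal G}$ is an open neighborhood of $\ps$ and $\set{W_{\g}:\g<\mu}$ is a local base, some $W_{\d}$ satisfies $W_{\d}\sst U^{\mathcal G}_{\eta}(\b'')$; but then $\set{\b'}\in W_{\d}\sst U^{\mathcal G}_{\eta}(\b'')$, contradicting (b).

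I do not expect any genuine obstacle: this is a routine adaptation of the proof of Theorem \ref{t:character}, with the subbasic sets $U^{\mathcal G}_{\xi}(\b)$ playing the role of the basic sets $U_{\xi}(\a)$, and with ``$\b<\k^+$'' replacing ``$\b\in C$'' because the pre-gap is indexed by all of $\k^+$. The only points that actually use the hypotheses are $\b^*<\k^+$ (regularity of $\k^+$ together with finiteness of the $K_{\g}$'s and $\mu\le\k$) and the existence of the separating neighborhood $U^{\mathcal G}_{\eta}(\b'')$, which is exactly the content of Remark \ref{r:pregap}.
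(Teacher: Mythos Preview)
Your proposal is correct and follows essentially the same argument as the paper's proof: both choose an ordinal above all second coordinates appearing in a putative small local base, observe that the corresponding singleton lies in every member of the base, and then use Remark~\ref{r:pregap} to produce a subbasic $U^{\mathcal G}_{\eta}(\b'')$ excluding that singleton, yielding the contradiction. The only cosmetic differences are that you begin by noting the upper bound $\k^+$ on the character and that you explicitly shrink an arbitrary local base $\{W_\g\}$ to basic neighborhoods, whereas the paper tacitly assumes the local base already consists of basic sets.
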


\begin{proof}
	As before, it is enough to show there is no subset of $\cu_{\mathcal G}$ of size less then $\k^+$ forming a local base of $\ps$.
	Suppose the contrary, that $\mathcal K=\set{K_{\varepsilon}:\varepsilon<\a}$, for $\a\le\k$, is a family of finite subsets of $\k\times\k^+$ such that $\mathcal B=\set{\bigcap_{(\xi,\b)\in K_{\varepsilon}}U^{\mathcal G}_{\xi}(\b):\varepsilon<\k}$ is a local base of $\ps$.
	Consider the set $$B=\set{\b<\k^+:(\exists K\in\mathcal K)(\exists \xi<\k)\ (\xi,\b)\in K}.$$
	Since $\mathcal K$ is a collection of at most $\k$ many finite sets, its cardinality is at most $\k$.
	Hence $\abs{B}<\k^+$, and since $\k^+$ is regular, there is some $\g\in \k^+\setminus \sup(B)$.
	Then $\g\notin F^{\mathcal G}_{\xi}(\b)$ for any $\xi<\k$ and $\b\in B$, implying that \begin{equation}\label{eq:chargap}\textstyle(\forall\varepsilon<\k)\ \set{\g}\in \bigcap_{(\xi,\b)\in K_{\varepsilon}}U^{\mathcal G}_{\xi}(\b).\end{equation}
	Take any $\d\in \k^+\setminus (\g+1)$.
	Then, by Remark \ref{r:pregap} there is $\zeta<\k$ so that $a_{\g}\cap b_{\d}\sst \zeta$.
	This means that $\set{\g}$ does not belong to basic open $U^{\mathcal G}_{\zeta}(\d)$.
	Since $\mathcal B$ is a local base of $\ps$ there is $K_0\in\mathcal K$ so that $\bigcap_{(\xi,\b)\in K_0}U^{\mathcal G}_{\xi}(\b)\sst U^{\mathcal G}_{\zeta}(\d)$.
	Using (\ref{eq:chargap}), then $$\textstyle\set{\g}\in \bigcap_{(\xi,\b)\in K_0}U^{\mathcal G}_{\xi}(\b)\sst U^{\mathcal G}_{\zeta}(\d),$$ which is clearly in contradiction with $\set{\g}\notin U^{\mathcal G}_{\zeta}(\d)$.
\end{proof}

Again, as in the general theory of matrices, we are able to connect topological properties of groups with well-known critical properties of pre-gaps.

\begin{theorem}\label{t:gap-frechet}
	If the group $G(\cu_\mathcal{G})$ is of tightness at most $\k$, then $\mathcal{G}$ is a gap.
\end{theorem}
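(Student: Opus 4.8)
The plan is to prove the contrapositive: assuming $\mathcal{G}$ is not a gap, I will show $G(\cu_{\mathcal{G}})$ has tightness greater than $\k$. So I would fix an interpolant $c\sst\k$, i.e. a set with $\abs{a_{\a}\setminus c}<\k$ and $\abs{b_{\a}\cap c}<\k$ for all $\a<\k^+$. The goal is to produce one set $A\sst[\k^+]^{<\o}$ with $\ps\in\overline{A}$ that has no subset of size $\le\k$ with $\ps$ in its closure; this is exactly a failure of tightness at most $\k$.

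The key step is to extract from $c$ a cofinal set of ``uniformly good'' indices. Since $\k$ is regular, each set $a_{\a}\setminus c$ (being of size $<\k$) is bounded below $\k$, so $\k^+=\bigcup_{j<\k}D_j$, where $D_j=\set{\a<\k^+:a_{\a}\setminus c\sst j}$, and the $D_j$ increase with $j$. Because $\k^+$ is regular and $\k^+>\k$, the union cannot be covered by $\k$ sets of size $\le\k$, so some $D_{j^*}$ has cardinality $\k^+$ and is therefore cofinal in $\k^+$. I set $D=D_{j^*}$ and take $A=\set{\set{\a}:\a\in D}$, a subset of the group $G(\cu_{\mathcal{G}})$.

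Next I would check $\ps\in\overline{A}$. Given a basic neighborhood $\bigcap_{(\xi,\b)\in K}U^{\mathcal{G}}_{\xi}(\b)$ of $\ps$ with $K$ finite, I pick $\a\in D$ with $\a\ge\b$ for every $(\xi,\b)\in K$, which is possible since $D$ is cofinal; since $F^{\mathcal{G}}_{\xi}(\b)\sst\b\le\a$ for each such pair, the singleton $\set{\a}$ misses every $F^{\mathcal{G}}_{\xi}(\b)$ and so lies in the neighborhood. Then I would show no $X\sst A$ with $\abs{X}\le\k$ accumulates at $\ps$. Such an $X$ is $\set{\set{\a}:\a\in S}$ for some $S\sst D$ with $\abs{S}\le\k$, and since $\abs{S}\le\k<\cf(\k^+)$ the set $S$ is bounded; I fix $\b$ with $\sup S<\b<\k^+$. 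As $\abs{b_{\b}\cap c}<\k$ and $\k$ is regular, there is $\eta<\k$ with $b_{\b}\cap c\sst\eta$; put $\xi=\max\set{j^*,\eta}<\k$. Then for every $\a\in S\sst D$ we have $\a<\b$ and, since $\xi\ge j^*$ bounds $a_{\a}\setminus c$ and $\xi\ge\eta$ bounds $b_{\b}\cap c$,
$$a_{\a}\cap b_{\b}\sst\br{a_{\a}\setminus c}\cup\br{b_{\b}\cap c}\sst\xi,$$
so $S\sst F^{\mathcal{G}}_{\xi}(\b)$. Hence $\set{\a}\notin U^{\mathcal{G}}_{\xi}(\b)$ for all $\a\in S$, meaning the open neighborhood $U^{\mathcal{G}}_{\xi}(\b)$ of $\ps$ is disjoint from $X$, so $\ps\notin\overline{X}$, as needed.

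The only essential use of the non-gap hypothesis is the passage to $D$: restricting to indices on which $a_{\a}\setminus c$ is uniformly bounded below $\k$ is exactly what lets a single basic open set $U^{\mathcal{G}}_{\xi}(\b)$ separate $\ps$ from all of a $\k$-sized $X$. Working with all singletons would not suffice, since the bounds $\min\set{j:a_{\a}\setminus c\sst j}$, $\a\in S$, could be cofinal in $\k$ and no single $\xi$ would work; it is the regularity of $\k^+$ over $\k$ that makes the uniform set $D$ cofinal. Apart from locating $D$, the argument is just unwinding the definitions of $F^{\mathcal{G}}_{\xi}(\b)$, $U^{\mathcal{G}}_{\xi}(\b)$, and $\cu_{\mathcal{G}}$, so I expect no further obstacle.
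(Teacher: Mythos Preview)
Your proof is correct and follows essentially the same route as the paper's: argue by contrapositive, use the interpolant $c$ together with regularity to extract a cofinal set of indices with a uniform bound, take the singletons over that set as the witness $A$, and show that any subset of size $\le\k$ is separated from $\ps$ by a single basic open set. The only cosmetic difference is that the paper uniformizes both $a_\a\setminus c$ and $b_\a\cap c$ simultaneously when forming its cofinal set $A_\eta$ (and then picks $\b\in A_\eta$), whereas you uniformize only the $a_\a$-side and bound $b_\b\cap c$ separately after choosing $\b$; both arrangements yield the same key inclusion $a_\a\cap b_\b\sst(a_\a\setminus c)\cup(b_\b\cap c)\sst\xi$.
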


\begin{proof}
	Assume the contrary, that $\mathcal{G}$ is not a gap.
	Then there is a subset $c$ of $\k$ such that $\abs{a_{\a}\setminus c}<\k$ and $\abs{b_{\a} \cap c}<\k$ for each $\a < \k^+$.
	For $\xi<\k$ set
	$$
	A_{\xi} = \set{\a < \k^+: a_{\a}\setminus \xi \sst c \;\&\; b_{\a} \cap c \sst \xi}.
	$$
	Since $\k$ is regular and by the choice of $c$, we have $\k^+ = \bigcup_{\xi < \k}A_{\xi}$.
	Now, by regularity of $\k^+$, there is some $\eta < \k$ such that $A_{\eta}$ is cofinal in $\k^+$.
	Let us consider the family $\mathcal{A} = \set{ \set{\a}: \a \in A_{\eta} }$.
	
	First we will prove that $\ps \in \overline{\mathcal{A}}$.
	For this, we need to show that for each set $K\in \left[\k\times\k^+\right]^{<\o}$, the intersection of $\mathcal A$ and $\bigcap_{(\xi,\b)\in K}U^{\mathcal G}_{\xi}(\b)$ is nonempty.
	Take any $\a\in A_{\eta}$ such that $\a>\max\set{\b:(\exists \xi<\k)\ (\xi,\b)\in K}$.
	This is possible as $K$ is finite and $A_{\eta}$ is cofinal in $\k^+$.
	Then $\set{\a}\in \mathcal A$.
	By the choice of $\a$ we also know that $\a\notin F^{\mathcal G}_{\xi}(\b)$ for any $(\xi,\b)\in K$.
	Hence $\set{\a}\in U^{\mathcal G}_{\xi}(\b)$ for each $(\xi,\b)\in K$.
	Then $\set{\a}\in \mathcal A\cap \bigcap_{(\xi,\b)\in K}U^{\mathcal G}_{\xi}(\b)$ as required.
		
	Since the tightness of $G(\cu_\mathcal{G})$ is at most $\k$, there is a set $X\sst\mathcal{A}$ of size at most $\k$ so that $\ps\in\overline{X}$.
	By regularity of $\k^+$ and since $A_{\eta}$ is cofinal in $\k^+$, there is some $\b\in A_{\eta}$ such that $\a<\b$ whenever $\set{\a}\in X$.
	Let us prove that $\a\in F^{\mathcal G}_{\eta}(\b)$ whenever $\set{\a}\in X$.
	Suppose not, that $\set{\a}\in X\sst\mathcal A$ and that $\a\notin F^{\mathcal G}_{\eta}(\b)$.
	The latter assumption gives that $a_{\a}\cap b_{\b}\not\sst \eta$, i.e. there is $\zeta\in\k\setminus \eta$ such that $\zeta\in a_{\a}$ and $\zeta\in b_{\b}$.
	From $\zeta\ge\eta$, $\a\in A_{\eta}$ (since $\set{\a}\in X$), and $\zeta\in a_{\a}$, it follows that $\zeta\in c$.
	On the other hand, from $\zeta\ge\eta$, $\b\in A_{\eta}$, and $\zeta\in b_{\b}$, we have $\zeta\notin c$.
	Clearly, the last two observations are contradictory, so we proved that $\a\in F^{\mathcal G}_{\eta}(\b)$ whenever $\set{\a}\in X$.
	But this means that no element of $X$ belongs to the basic open set $U^{\mathcal G}_{\eta}(\b)$, contradicting the choice of $X$.
	Hence, the theorem is proved.
\end{proof}

Recall that an $(\o_1,\o_1)$ pre-gap $\mathcal G$ in $\cp(\o)$ is Hausdorff if the set $F^{\mathcal G}_n(\b)$ is finite for all $n<\o$ and $\b<\o_1$. In this case $\mathcal G$ has to be a gap.

\begin{theorem}
	If $\mathcal G$ is a Hausdorff gap in $\cp(\o)$, then $G(\cu_{\mathcal G})\equiv_T [\o_1]^{<\o}$.
\end{theorem}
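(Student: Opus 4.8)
The plan is to avoid the family $\mathcal F^{\mathcal G}=\set{F^{\mathcal G}_\xi(\b):\xi<\o,\ \b<\o_1}$ itself — which, as remarked above, need not be a matrix — and instead to work with the directed family of \emph{finite unions} $F_K:=\bigcup_{(n,\b)\in K}F^{\mathcal G}_n(\b)$, for $K\in\left[\o\times\o_1\right]^{<\o}$, supplied by Remark \ref{r:directedgap}. The Hausdorff hypothesis makes each $F^{\mathcal G}_n(\b)$ finite, so this family lives inside $[\o_1]^{<\o}$; once I verify it has cardinality $\o_1$, Remark \ref{r:types} finishes the proof.

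First I would note that $G(\cu_{\mathcal G})\equiv_T(\cu_{\mathcal G},\supseteq)$, since the given local base $\cu_{\mathcal G}$ is cofinal in the directed set of all open neighborhoods of $\ps$ ordered by $\supseteq$. Then, for $K\in\left[\o\times\o_1\right]^{<\o}$ one has $\bigcap_{(n,\b)\in K}U^{\mathcal G}_n(\b)=\set{a\in[\o_1]^{<\o}:a\cap F_K=\ps}$, so each basic open set depends on $K$ only through $F_K$. Putting $\mathcal D=\set{F_K:K\in\left[\o\times\o_1\right]^{<\o}}$, the map $F\mapsto\set{a\in[\o_1]^{<\o}:a\cap F=\ps}$ sends $\mathcal D$ onto $\cu_{\mathcal G}$, and it is injective since $x\in F\setminus F'$ forces $\set{x}$ to lie in exactly one of the two corresponding open sets; as $F\sst F'$ iff $\set{a:a\cap F=\ps}\supseteq\set{a:a\cap F'=\ps}$, this map is an order isomorphism from $(\mathcal D,\sst)$ onto $(\cu_{\mathcal G},\supseteq)$. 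Hence $G(\cu_{\mathcal G})\equiv_T(\mathcal D,\sst)$.

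It remains to compute the Tukey type of $(\mathcal D,\sst)$. Since $\mathcal G$ is Hausdorff, every $F^{\mathcal G}_n(\b)$ is finite, so each $F_K$ is a finite union of finite sets and $\mathcal D\sst[\o_1]^{<\o}$; by Remark \ref{r:directedgap}, $(\mathcal D,\sst)$ is directed. Moreover $\abs{\mathcal D}\le\abs{\left[\o\times\o_1\right]^{<\o}}=\o_1$, while $\mathcal D$ cannot be countable, since by the isomorphism above $\cu_{\mathcal G}$ would then be a countable local base of $\ps$ in $G(\cu_{\mathcal G})$, contradicting Lemma \ref{l:gap-character}. Thus $\abs{\mathcal D}=\o_1$, and Remark \ref{r:types} (applied with $\k=\o_1$) yields $(\mathcal D,\sst)\equiv_T([\o_1]^{<\o},\sst)$. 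Combining the two equivalences gives $G(\cu_{\mathcal G})\equiv_T[\o_1]^{<\o}$.

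I do not foresee a real obstacle here: this is the gap counterpart of the Hausdorff-tower remark at the end of Section \ref{s:subadditive}. The only genuine point is that, because $\mathcal F^{\mathcal G}$ is not a matrix, the reduction has to be routed through the directed family $\mathcal D$ of finite unions (Remark \ref{r:directedgap}) rather than through Remark \ref{r:tukey} or Theorem \ref{t:maxtype}; and one must keep in mind that ``$\mathcal G$ Hausdorff'' is exactly what guarantees $\mathcal D\sst[\o_1]^{<\o}$. The cardinality check for $\mathcal D$ is the only bit of bookkeeping.
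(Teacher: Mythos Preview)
Your proof is correct and follows essentially the same route as the paper: both identify $(\cu_{\mathcal G},\supseteq)$ with the directed family $\mathcal D=\set{\bigcup_{(n,\b)\in K}F^{\mathcal G}_n(\b):K\in[\o\times\o_1]^{<\o}}$ ordered by $\sst$, use the Hausdorff hypothesis to place $\mathcal D$ inside $[\o_1]^{<\o}$, invoke Lemma~\ref{l:gap-character} for uncountability, and finish with Remarks~\ref{r:types} and~\ref{r:directedgap}. You simply spell out the order-isomorphism and the cardinality check in more detail than the paper does.
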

\begin{proof}
	Note, as before, that the collection $\cu_{\mathcal G}$ ordered by $\supseteq$ is isomorphic as a poset to $\set{\bigcup_{(n,\b)\in K}F^{\mathcal G}_n(\b): K\in \left[\o\times\o_1\right]^{<\o}}$ ordered by $\sst$.
	This collection is uncountable, for example by Lemma \ref{l:gap-character}.
	Since $\mathcal G$ is Hausdorff, for each $K\in \left[\o\times\o_1\right]^{<\o}$ the set $\bigcup_{(n,\b)\in K}F^{\mathcal G}_n(\b)$ is a finite subset of $\o_1$.
	Hence, $G(\cu_{\mathcal G})\equiv_T [\o_1]^{<\o}$ by Remark \ref{r:types} and Remark \ref{r:directedgap}.
\end{proof}

\subsection*{Acknowledgements}
The first and the third author are partially supported by the grant from SFRS (7750027-SMART).
The first author is partially supported by the Ministry of Science, Technological Development and Innovation of the Republic of Serbia (Grants No. 451-03-137/2025-03/ 200125 \& 451-03-136/2025-03/ 200125)
The second author has been supported by the Ministry of Science, Technological Development and Innovation (Contract No. 451-03-137/2025-03/200156) and the Faculty of Technical Sciences, University of Novi Sad throught project “Scientific and Artistic Research Work of Researchers in Teaching and Associate Positions at the Faculty of Technical Sciences, University of Novi Sad 2025” (No. 01-50/295).
Also, the second author is grateful for the support within the project of the Department for general discipline in technology, Faculty of Technical Sciences under the title Soico-technological aspects of improving the teaching process in the English language in fundamental disciplines.
The third author is partially supported by grants from NSERC (455916) and CNRS (UMR7586).

\bibliographystyle{alpha}
\bibliography{bib_groups.bib}

\end{document}